\newcounter{labeleditem}
\newcommand\labeleditem[1][]{\item[#1]\refstepcounter{labeleditem}\def\@currentlabel{#1}}
\theoremstyle{plain}
\newtheorem{theorem}{Theorem}[section]
\newtheorem{lemma}[theorem]{Lemma}
\newtheorem{proposition}[theorem]{Proposition}
\theoremstyle{definition}
\newtheorem{remark}[theorem]{Remark}
\newtheorem{example}[theorem]{Example}
\numberwithin{equation}{section}
\newcommand\N{\mathbb{N}}
\newcommand\Z{\mathbb{Z}}
\newcommand\R{\mathbb{R}}
\newcommand\C{\mathbb{C}}
\newcommand\E{\mathcal{E}}
\newcommand\csn{\operatorname{csn}}
\newcommand{\LieGroup}{G}
\newcommand{\Vecs}{E}
\newcommand{\Dim}{n}
\newcommand{\Rep}{\pi}
\newcommand{\Molf}{\chi}
\DeclarePairedDelimiter{\abs}{\lvert}{\rvert}
\DeclarePairedDelimiter{\norm}{\lVert}{\rVert}
\DeclareMathOperator{\Supp}{supp}
\NewDocumentCommand{\Eval}{m m}{\left<#1,#2\right>}
\NewDocumentCommand{\Lloc}{}{L^1_{\operatorname{loc}}}
\NewDocumentCommand{\LieG}{}{G}
\NewDocumentCommand{\LieA}{O{g}}{\mathfrak{#1}}
\NewDocumentCommand{\Cdot}{}{\, \cdot \,} 
\NewDocumentCommand{\RMul}{m}{R_{#1}}
\NewDocumentCommand{\Orbit}{m}{\gamma_{#1}}
\NewDocumentCommand{\Frame}{s o}{%
  \IfBooleanTF{#1}%
  {\IfValueTF{#2}{Y_{#2}}{\mathbf{Y}}}%
  {\IfValueTF{#2}{X_{#2}}{\mathbf{X}}}%
}
\NewDocumentCommand{\Origin}{}{e}
\NewDocumentCommand{\VectorField}{s}{\IfBooleanTF{#1}{Y}{X}}
\NewDocumentCommand{\RRep}{}{\Rep_{R}}
\NewDocumentCommand{\CCont}{}{C_{c}}
\NewDocumentCommand{\Smooth}{s O{\infty}}{\IfBooleanTF{#1}{\mathcal{E}}{C^{#2}}} 
\NewDocumentCommand{\CSmooth}{s O{\infty}}{\IfBooleanTF{#1}{\mathcal{D}}{C_c^{#2}}}    
\NewDocumentCommand{\SmoothV}{O{\infty}}{\Vecs^{#1}}
\begin{document}

\title[Spaces of smooth and ultradifferentiable vectors]{Quasinormability and property \((\Omega)\) for spaces of smooth and ultradifferentiable vectors associated with Lie group representations}

\author[A. Debrouwere]{Andreas Debrouwere}
\address{A. Debrouwere, Department of Mathematics and Data Science \\ Vrije Universiteit Brussel, Belgium\\ Pleinlaan 2 \\ 1050 Brussels \\ Belgium}
\email{Andreas.Debrouwere@vub.be}

\author[M. Huttener]{Michiel Huttener}
\address{M. Huttener, Department of Mathematics: Analysis, Logic and Discrete Mathematics\\ Ghent University\\ Krijgslaan 281\\ 9000 Ghent\\ Belgium}
\email{Michiel.Huttener@UGent.be}

\author[J. Vindas]{Jasson Vindas}
\address{J. Vindas, Department of Mathematics: Analysis, Logic and Discrete Mathematics\\ Ghent University\\ Krijgslaan 281\\ 9000 Ghent\\ Belgium}
\email{Jasson.Vindas@UGent.be}

\thanks {M. Huttener and J. Vindas were supported by the Research Foundation-Flanders through the FWO grant number G067621N}

\subjclass[2010]{\emph{Primary.} 22E30, 46E10, 46A63. \emph{Secondary.} 43A15, 46E40}
\keywords{Lie group representations, smooth and ultradifferentiable vectors, quasinormability, the linear topological invariant \((\Omega)\), weighted spaces of smooth and ultradifferentiable functions on Lie groups}

\begin{abstract}
    We prove that the spaces of smooth and ultradifferentiable vectors associated with a representation of a real Lie group on a Fr\'{e}chet space \(E\) are quasinormable if \(E\) is so.
    A similar result is shown to hold for the linear topological invariant \((\Omega)\).
    In the ultradifferentiable case, our results particularly apply to spaces of Gevrey vectors of Beurling type.
    As an application, we study the quasinormability and the property \((\Omega)\) for a broad class of Fr\'{e}chet spaces of smooth and ultradifferentiable functions on Lie groups globally defined via families of weight functions.
\end{abstract}

\maketitle

\section{Introduction}

In this article we study the quasinormability and the property \((\Omega)\) for spaces of smooth and ultradifferentiable vectors associated with representations of real Lie groups.
In particular, we will provide criteria to determine when a Lie group invariant locally convex space of smooth or ultradifferentiable functions possesses one of these linear topological properties.
Our considerations shall cover the important instance of spaces of Gevrey vectors of Beurling type, which, together with their Roumieu variants, were introduced and thoroughly investigated by Goodman and Wallach \cite{Goodman1,Goodman2,GW}.

The notion of quasinormability for locally convex spaces is due to Grothendieck \cite{Grothendieck}.
The related property \((\Omega)\) for Fr\'{e}chet spaces
goes back to Vogt and Wagner \cite{M-V,V-W} and may be seen as a quantified version of quasinormability within the class of Fr\'{e}chet spaces.
Every Fr\'{e}chet space that satisfies \((\Omega)\) is hence quasinormable.
Both concepts express approximation properties with respect to families of continuous seminorms.
In this regard, our work here is closely connected with classical results of Gårding \cite{Garding47,Garding}, Nelson \cite{Nelson}, and Goodman \cite[Section 3]{Goodman1} about approximations by smooth, analytic, or Gevrey vectors, respectively.

The interest in these two linear topological properties stems, among other things, from the fact they are often crucial hypotheses for the application of various abstract functional analytic tools.
For example, given a surjective continuous linear map \(f\colon X \to Y\) between two Fr\'{e}chet spaces, the map \(f\) lifts bounded sets if \(\ker f\) is quasinormable \cite{B-D,M-V}, while \(f\) admits a continuous linear right inverse if \(\ker f\) satisfies \((\Omega)\) (assuming that \(X\) or \(Y\) is nuclear and \(Y\) satisfies the so-called property \((\operatorname{DN})\)) \cite{M-V, Vogt-87}.
Furthermore, strong duals of quasinormable Fr\'{e}chet spaces are ultrabornological and thus barrelled, whence the Banach-Steinhaus theorem and the open mapping and closed graph theorems of De Wilde may be applied to them.
We also mention that \((\Omega)\) plays an important role in the isomorphic classification theory for Fr\'{e}chet spaces; in fact, \((\Omega)\) is one of the key assumptions to verify if one wants to obtain sequence space representations of function spaces via the structure theory of Fr\'{e}chet spaces (cf.\ \cite{Debrouwere2, Langenbruch2012}).

Quasinormability and \((\Omega)\) have been studied for a variety of concrete Fr\'{e}chet function spaces; see \cite{B-E,B-M} for spaces of continuous functions, \cite{Debrouwere,Langenbruch2016,Wolf} for spaces of analytic functions, and \cite{D-K, VogtPDFG} for smooth kernels of partial differential operators.
One of the goals of this work is to provide a systematic method for establishing both properties for function spaces that are invariant under a Lie group action, which we shall achieve here viewing such spaces as spaces of smooth and ultradifferentiable vectors of Lie group representations.
We remark that the quasinormability of spaces of smooth vectors associated with representations of \((\R^{n},+)\) was studied by the first author in \cite{Debrouwere}.

We now discuss the content of this article in some more detail.
By a representation of a (real) Lie group \(G\) on a Fr\'{e}chet space \(E\) we simply mean a group homomorphism \(\pi\colon G \to \operatorname{GL}(E)\), where \(\operatorname{GL}(E)\) stands for the group of topological isomorphisms of \(E\) (in general, we will not require \(\pi\) to be (strongly) continuous throughout the article).
We refer to the preparatory Sections \ref{sect-Rn}-\ref{Section smooth and ultradifferentiable vectors} for more information about representations and the associated spaces of smooth and ultradifferentiable vectors.
In the smooth case, our main results may now be summarized as follows:
\begin{theorem} \label{main-intro-abstract}
    Let \(\pi\) be a (strongly) continuous
    representation of a Lie group on a Fr\'{e}chet space \(E\).
    Let \(E^\infty\) be the space of smooth vectors associated with \(\pi\).
    \begin{enumerate}[i]
        \item \label{main-intro-abstract-1} \(E^\infty\) is quasinormable if \(E\) is so.
        \item \label{main-intro-abstract-2} \(E^\infty\) satisfies \((\Omega)\) if \(E\) does so.
    \end{enumerate}
\end{theorem}

In fact, we show that part \ref{main-intro-abstract-1} of  \cref{main-intro-abstract} holds not only for Fr\'{e}chet spaces but also for general sequentially complete locally convex Hausdorff spaces if in addition the representation is locally equicontinuous.
\Cref{main-intro-abstract} and its analogue for spaces of ultradifferentiable vectors are shown in \cref{sect-main}.
The proofs of \cref{main-intro-abstract} and its ultradifferentiable counterpart for quasinormability make use of a standard approximation procedure involving approximate identities \cite{Garding, Goodman1}.
This procedure is revisited in Subsection \ref{subs-approx}.
Our analysis of the property \((\Omega)\) for ultradifferentiable vectors however requires some new approximation tools.
Our arguments are then based on the so-called parametrix method, a powerful technique that goes back to Schwartz \cite{Schwartz} and was further developed by Komatsu within the theory of ultradifferentiable functions and ultradistributions \cite{Komatsu}.
In Subsection \ref{subs-parametrix} we present an extension of the parametrix method to the setting of ultradifferentiable vectors associated with Lie group representations by adapting a key idea from the proof of the celebrated Dixmier-Malliavin factorization theorem \cite{D-M}.

It turns out that many Fr\'{e}chet function spaces on a Lie group \(G\) may be identified with spaces of smooth and ultradifferentiable vectors associated with the left- or right-regular representation of \(G\) on a suitably chosen Fr\'{e}chet function space \(E\) (usually \(E\) is a weighted space of continuous or integrable functions).
This makes \cref{main-intro-abstract} and its ultradifferentiable analogue into powerful devices to study the quasinormability and the property \((\Omega)\) for concrete Fr\'{e}chet function spaces.
We carry out this idea in a very general framework in \cref{sect-examplesI,sect-examplesII}.
We end the introduction by discussing an important instance of these ideas.

Let \(G\) be a Lie group and let \(\mathcal{V} = (v_j)_{j \in \N}\) be a pointwise non-decreasing sequence of strictly positive continuous functions on \(G\) satisfying the mild regularity condition
\begin{equation}
    \label{wfs}
    \forall K \subseteq G \text{ compact } \, \forall i \in \N \, \exists j \geq i, C >0 \, \forall x \in G, y \in K \colon v_i(xy) \leq C v_j(x).
\end{equation}
For \(p \in [1,\infty]\), we define
\[
    \mathcal{D}_{L_{\mathcal{V}}^{p}}(G)
    = \{ f \in C^{\infty}(G) \mid v_jDf \in L^p(G), \, \forall D \in U(\mathfrak{g}), j \in \N \}.
\]
Here, the Lebesgue space \(L^p(G)\) is defined with respect to a fixed right-invariant Haar measure on \(G\), while the elements of
the universal enveloping algebra \(U(\mathfrak{g})\) are to be interpreted as left-invariant differential operators.
We endow  \(\mathcal{D}_{L_{\mathcal{V}}^{p}}(G)\) with its natural locally convex topology, for which it becomes a Fr\'{e}chet space.
If \(G = (\R^n, +)\) these spaces are weighted variants of the classical Schwartz spaces \(\mathcal{D}_{L^{p}}(\R^n)\) \cite{Schwartz} (cf. \cite{D-P-V}).
The sequence space representation \(\mathcal{D}_{L^{p}}(\R^n) \cong s \widehat\otimes \ell_{p}\) \cite{Vogt-83} implies that \(\mathcal{D}_{L^{p}}(\R^n)\) satisfies \((\Omega)\) and thus is quasinormable.
Our results from \cref{sect-examplesII} yield the following characterization of the quasinormability and the property \((\Omega)\) for the spaces \(\mathcal{D}_{L_{\mathcal{V}}^{p}}(G)\) in terms of \(\mathcal{V}\):

\begin{theorem} \label{main-intro-applied}
    Let \(\mathcal{V} = (v_j)_{j \in \N}\) be a pointwise non-decreasing sequence of strictly positive continuous functions on a Lie group \(G\) satisfying \eqref{wfs} and let \(p \in [1,\infty]\).
    \begin{enumerate}[a]
        \item Consider the following statements:
            \begin{enumerate}[i]
                \item \label{main-intro-applied-a-1}\(\mathcal{V}\) satisfies the condition
                    \[
                        \forall i \in \N \, \exists j \geq i \, \forall m \geq j \, \forall \varepsilon \in (0,1] \, \exists C >0 \, \forall x \in G \colon 
                        \frac{1}{v_j(x)} \leq \frac{\varepsilon }{v_i(x)} + \frac{C}{v_m(x)}.
                    \]
                \item \label{main-intro-applied-a-2} \(\mathcal{D}_{L^p_{\mathcal{V}}}(G)\)
                    is quasinormable.
            \end{enumerate}
            Then, \ref{main-intro-applied-a-1} \(\Rightarrow\) \ref{main-intro-applied-a-2}.
            If \(p = \infty\) or \(G\) is unimodular, then also \ref{main-intro-applied-a-2} \(\Rightarrow\) \ref{main-intro-applied-a-1}.
        \item Consider the following statements:
            \begin{enumerate}[i]
                \item \label{main-intro-applied-b-1} \(\mathcal{V}\) satisfies the condition
                    \[
                        \forall i \in \N \, \exists j \geq i \, \forall m \geq j \, \exists C,s >0 \, \forall \varepsilon \in (0,1] \, \forall x \in G \colon 
                        \frac{1}{v_j(x)} \leq \frac{\varepsilon}{v_i(x)} + \frac{C}{\varepsilon^s}\frac{1}{v_m(x)}.
                    \]
                \item \label{main-intro-applied-b-2} \(\mathcal{D}_{L^p_{\mathcal{V}}}(G)\)
                    satisfies \((\Omega)\).
            \end{enumerate}
            Then, \ref{main-intro-applied-b-1} \(\Rightarrow\) \ref{main-intro-applied-b-2}.
            If \(p = \infty\) or \(G\) is unimodular, then also \ref{main-intro-applied-b-2} \(\Rightarrow\) \ref{main-intro-applied-b-1}.
    \end{enumerate}
\end{theorem}

\section{Spaces of vector-valued ultradifferentiable functions on open subsets of \texorpdfstring{\(\R^n\)}{Rⁿ}}\label{sect-Rn}
In this short section we recall some facts about vector-valued ultradifferentiable functions on subsets of \(\mathbb{R}^{n}\).
By a \emph{weight function} \cite{BMT}  we mean a continuous increasing function \(\omega\colon [0,\infty) \rightarrow [0,\infty)\) with \(\omega_{|[0,1]} \equiv 0\) satisfying the following properties:
\begin{enumerate}
    \labeleditem[\((\alpha)\)] \label{cnd:alpha} \(\omega(2t) = O(\omega(t))\).
    \labeleditem[\((\beta)\)] \label{cnd:beta} \(\displaystyle \int_{1}^{\infty}\frac{\omega(t)}{t^2} dt < \infty\).
    \labeleditem[\((\gamma)\)] \label{cnd:gamma} \(\log t = o(\omega(t))\).
    \labeleditem[\((\delta)\)] \label{cnd:delta} \(\varphi\colon [0, \infty) \rightarrow [0, \infty)\), \(\varphi(t) = \omega(e^{t})\), is convex.
\end{enumerate}
Since \(\omega\) is increasing, condition \ref{cnd:beta} implies that \(\omega(t) = o(t)\).
\begin{example}\label{exa:gevrey-weight}
    The \emph{Gevrey weight of order} \(s >0\) is defined as
    \[
        \omega_s(t) 
        = \max \{0, t^s - 1 \}
        .
    \]
    We shall always assume that \(s<1\), which ensures that \(\omega_s\) is a weight function.
\end{example}
Throughout the rest of this article we fix a weight function \(\omega\) and write \(\varphi(t) = \omega(e^t)\) (cf.\ condition \ref{cnd:delta} above).
We define
\[
    \varphi^{*} \colon [0, \infty) \rightarrow [0, \infty), \, \varphi^{*}(t) = \sup_{u \geq 0} \{tu- \varphi(u)\}.
\]
The function \(\varphi^{*}\) is increasing,
convex, \(\varphi^*(0) = 0\), \((\varphi^*)^* = \varphi\), and \(\varphi^*(t)/t \nearrow \infty\) on \([0,\infty)\).
We have \cite[Lemma 2.6]{Heinrich}
\begin{equation}
    \label{M12}
    \forall C_1,C_2,h > 0 \, \exists C,k >0 \, \forall t \geq 0 \colon 
    \frac{1}{k}\varphi^*(k(t+C_1)) + C_2t \leq \frac{1}{h}\varphi^*(ht) + \log C.
\end{equation}
The conditions \(\omega(t) = o(t)\) and \eqref{M12} imply that
\begin{equation}
    \label{NA} 
    \forall h,k > 0 \, \exists C >0 \, \forall j \in \N \colon 
    j! \leq Ck^j \exp\left({\frac{1}{h}\varphi^*(hj)}\right).
\end{equation}
\begin{example}\label{gevrey1}
    For the Gevrey weights \(\omega_s\),
    we set \(\varphi_s(t) = \omega_s(e^t)\).
    Then,
    \[
        \exp(\varphi^*_s(t)) 
        = e \left(\frac{1}{se}\right)^{\frac{t}{s}} t^{\frac{t}{s}}.
    \]
    Consequently, we have, for all \(h>0\),
    \[
        \exp \left (\frac{1}{h}\varphi_s^*(ht) \right) 
        = e^{\frac{1}{h}}\left(\frac{h}{se}\right)^{\frac{t}{s}} t^{\frac{t}{s}}.
    \]
\end{example}

\smallskip

Let \(\Theta \subseteq \R^n\) be open and let \(E\) be a lcHs (= locally convex Hausdorff space).
We denote by \(\csn(E)\) the family of all continuous seminorms on \(E\).
Given \(h >0\) we define \(\E^{\omega,h}(\Theta;E)\) as the space consisting of all \(f \in C^\infty(\Theta;E)\)
such that, for all \(K \subseteq \Theta\) compact and \(p \in \csn(E)\),
\[
    p_{K,\omega,h}(f) 
    = \sup_{x \in K} \sup_{\alpha \in \N^n} p(f^{(\alpha)}(x))\exp \left(-\frac{1}{h}\varphi^*(h\abs{\alpha})\right)  
    < \infty
    .
\]
We endow \(\E^{\omega,h}(\Theta;E)\) with the Hausdorff locally convex topology generated by the system of seminorms \(\{ p_{K,\omega,h} \mid K \subseteq \Theta \text{ compact}, p \in \csn(E) \}\).
We set
\[
    \E^{(\omega)}(\Theta;E) 
    = \varprojlim_{h \to 0^+} \E^{\omega,h}(\Theta;E)
    .
\]
We write \(\E^{(\omega)}(\Theta) = \E^{(\omega)}(\Theta;\C)\).
The non-quasianalyticity condition \ref{cnd:beta} means that \(\E^{(\omega)}(\Theta)\) contains non-zero compactly supported functions (see \cite{BMT} for more information).
Let \(\mathcal{A}(\Theta)\) be the space of real analytic functions on \(\Theta\).
The inequality \eqref{NA} implies that \(\mathcal{A}(\Theta) \subseteq \E^{(\omega)}(\Theta)\).
\begin{lemma}\label{comp-ra}
    Let \(\Theta, \Theta' \subseteq \R^n\) be open, and let \(E\) be a lcHs.
    Let \(\phi\colon \Theta \to \Theta'\) be real analytic.
    Then, \(f \circ \phi \in \E^{(\omega)}(\Theta;E)\) for all \(f \in \E^{(\omega)}(\Theta';E)\).
\end{lemma}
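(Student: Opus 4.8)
The plan is to verify the defining seminorm estimates of $\E^{(\omega)}(\Theta;E)$ directly, reducing the vector-valued problem to scalar ones by applying continuous seminorms of $E$. That $f\circ\varphi\in C^\infty(\Theta;E)$ is immediate from the chain rule, so it suffices to show $p_{K,\omega,h}(f\circ\varphi)<\infty$ for every compact $K\subseteq\Theta$, every $p\in\csn(E)$ and every $h>0$. Fix these. Choose a compact neighbourhood $K'$ of $K$ contained in $\Theta$. Since $\varphi$ is real analytic on a neighbourhood of $K'$, there are $C_1,L_1>0$ with $\sup_{x\in K'}|\varphi_j^{(\gamma)}(x)|\le C_1L_1^{|\gamma|}|\gamma|!$ for all $\gamma\in\N^n$ and all coordinate functions $\varphi_j$ of $\varphi$. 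Moreover $L:=\varphi(K)$ is a compact subset of $\Theta'$, so, as $f\in\E^{(\omega)}(\Theta';E)$, for every $\widetilde h>0$ there is $C_{\widetilde h}>0$ with $\sup_{y\in L}p(f^{(\beta)}(y))\le C_{\widetilde h}\exp\!\big(\tfrac1{\widetilde h}\phi^*(\widetilde h|\beta|)\big)$ for all $\beta\in\N^n$; the freedom to take $\widetilde h$ arbitrarily small, inherent to $\E^{(\omega)}$ being a projective (Beurling-type) limit, will be decisive.

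Next I would insert these two families of bounds into the multivariate Fa\`{a} di Bruno formula for $(f\circ\varphi)^{(\alpha)}(x)$ and apply $p$. Using the triangle inequality together with the monotonicity of $\phi^*$, so that $\phi^*(\widetilde h|\beta|)\le\phi^*(\widetilde h|\alpha|)$ whenever $|\beta|\le|\alpha|$, one obtains for $x\in K$ a bound
$$p\big((f\circ\varphi)^{(\alpha)}(x)\big)\le C_{\widetilde h}\,e^{\frac1{\widetilde h}\phi^*(\widetilde h|\alpha|)}\,L_1^{|\alpha|}\sum_{1\le|\beta|\le|\alpha|}C_1^{|\beta|}\,\Pi_{\alpha,\beta},$$
where $\Pi_{\alpha,\beta}$ gathers the Fa\`{a} di Bruno coefficients together with the factorials $|\gamma|!$ produced by the derivatives of $\varphi$. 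The standard combinatorial estimate for these quantities --- the one underlying the closure of ultradifferentiable classes under composition, obtained by summing the products of factorials over all admissible decompositions of $\alpha$ and then over $\beta$ --- yields $\sum_{1\le|\beta|\le|\alpha|}C_1^{|\beta|}\Pi_{\alpha,\beta}\le A^{|\alpha|}|\alpha|!$ for a constant $A$ depending only on $C_1$ and $n$ (in dimension one this is the classical Bell-polynomial bound; in general one may also reduce to it by controlling $(f\circ\varphi)^{(\alpha)}$ through iterated directional derivatives, at the cost of an extra factor $C^{|\alpha|}$ depending only on $n$). This leaves $p\big((f\circ\varphi)^{(\alpha)}(x)\big)\le C_{\widetilde h}(AL_1)^{|\alpha|}|\alpha|!\,e^{\frac1{\widetilde h}\phi^*(\widetilde h|\alpha|)}$.

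It then remains to absorb $(AL_1)^{|\alpha|}|\alpha|!\,e^{\frac1{\widetilde h}\phi^*(\widetilde h|\alpha|)}$ into $C\,e^{\frac1h\phi^*(h|\alpha|)}$ for a suitable constant $C$, provided $\widetilde h$ is chosen small enough in terms of $h$. For this I would invoke \eqref{NA} to replace $|\alpha|!$ by $C'k^{|\alpha|}e^{\frac1{h'}\phi^*(h'|\alpha|)}$ with $k$ and $h'$ at our disposal, the superadditivity of $\phi^*$ (a consequence of its convexity and of $\phi^*(0)=0$) to merge the two $\phi^*$-terms into a single one, and finally \eqref{M12} --- with vanishing additive shift and with the power $(AL_1k)^{|\alpha|}=e^{|\alpha|\log(AL_1k)}$ playing the role of the linear term $C_2t$ --- to bring the parameter back down to $h$ at the price of an additive constant. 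Taking $\widetilde h$ and $h'$ small enough throughout makes this work, giving $p_{K,\omega,h}(f\circ\varphi)<\infty$ and hence $f\circ\varphi\in\E^{(\omega)}(\Theta;E)$.

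The main obstacle is the bookkeeping in the two middle steps: pinning down the clean combinatorial bound $A^{|\alpha|}|\alpha|!$ for the Fa\`{a} di Bruno terms, and then arranging the successive parameter reductions so that the ``$\forall h\ \exists C$'' structure of the target space is matched by the ``$\forall\widetilde h\ \exists C_{\widetilde h}$'' freedom available for $f$. Conceptually, the statement only expresses that a real-analytic change of variables is harmless at the Beurling level --- the same phenomenon that is already responsible for the inclusion $\mathcal{A}(\Theta)\subseteq\E^{(\omega)}(\Theta)$ via \eqref{NA}.
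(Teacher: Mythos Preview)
Your overall plan---reduce to seminorms, apply Fa\`a di Bruno with analytic bounds on the derivatives of $\varphi$, then absorb into the target weight---is exactly the adaptation of H\"ormander's Proposition~8.4.1 that the paper has in mind. The gap is in the absorption step. Once you use the crude monotonicity bound $\phi^*(\widetilde h|\beta|)\le\phi^*(\widetilde h|\alpha|)$ to pull a single factor $\exp\!\big(\tfrac1{\widetilde h}\phi^*(\widetilde h|\alpha|)\big)$ out of the sum, the residual $|\alpha|!$ is fatal: for the Gevrey weight $\omega_s$ one has $\exp\!\big(\tfrac1h\phi^*_s(hj)\big)\asymp H^j j^{j/s}$ with $H=H(h)$, so you would need $j!\le C(H/\widetilde H)^j$ for all $j$, which is impossible since $j!$ beats every geometric sequence. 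Your proposed ``merge'' via superadditivity does not save this: superadditivity says $\phi^*(a)+\phi^*(b)\le\phi^*(a+b)$, but here both arguments are proportional to $|\alpha|$, so even with $h'=\widetilde h$ you end up with $\tfrac2{\widetilde h}\phi^*(\widetilde h|\alpha|)$, whose leading growth (in the Gevrey case $\tfrac{2|\alpha|}{s}\log|\alpha|$) is genuinely twice that of any $\tfrac1h\phi^*(h|\alpha|)$; no choice of parameters and no application of \eqref{M12} can close that factor of two in the exponent.

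The remedy is not to discard the $|\beta|$-dependence. Writing $m=|\alpha|$ and $k=|\beta|$, the majorant/Fa\`a di Bruno computation gives (in one variable; the general case is analogous) a bound $L_1^m\sum_{k=1}^m\tfrac{m!}{k!}\binom{m-1}{k-1}C_1^k M_k$ with $M_k=C_{\widetilde h}\exp\!\big(\tfrac1{\widetilde h}\phi^*(\widetilde h k)\big)$. Now split $\tfrac{m!}{k!}=\binom{m}{k}(m-k)!$, bound the two binomial coefficients by $4^m$, and apply \eqref{NA} \emph{with the same parameter} $\widetilde h$ to obtain $(m-k)!\le C'\exp\!\big(\tfrac1{\widetilde h}\phi^*(\widetilde h(m-k))\big)$. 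Superadditivity is now used legitimately, because the arguments $\widetilde h(m-k)$ and $\widetilde h k$ add up to $\widetilde h m$; this yields $\tfrac{m!}{k!}M_k\le C''\,2^m\exp\!\big(\tfrac1{\widetilde h}\phi^*(\widetilde h m)\big)$, uniformly in $k$. Summing over $k$ leaves an estimate $C_{\widetilde h}'''\,R^m\exp\!\big(\tfrac1{\widetilde h}\phi^*(\widetilde h m)\big)$ with $R$ depending only on $L_1,C_1,n$, and a single use of \eqref{M12} (with $C_1=0$, $C_2=\log R$) then supplies the required $\widetilde h=\widetilde h(h)$.
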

\begin{proof} 
    This can be shown by
    adapting the proof of \cite[Proposition 8.4.1]{Hormander}; the details are left to the reader.
\end{proof}

\section{Spaces of vector-valued smooth and ultradifferentiable functions on manifolds}\label{sect-Man}

We shall now discuss vector-valued smooth and ultradifferentiable functions on manifolds.
In this section we fix a smooth manifold \(M\) of dimension \(n\).
In the ultradifferentiable case, we shall always tacitly assume the manifold to be real analytic.
Throughout this article the term \emph{regular} will mean smooth if the manifold under consideration is smooth and real analytic if it is real analytic.

Let \(E\) be a lcHs and \(j \in \N \cup \{\infty\}\).
We define \(C^j(M,E)\) (\(\E^{(\omega)}(M;E)\)) as the space of all those \(f\colon M \to E\) such that \(f \circ \phi^{-1} \in C^j(\phi(U), E)\) (\(f \circ \phi^{-1} \in \E^{(\omega)}(\phi(U), E)\)) for all regular charts \((\phi,U)\) of \(M\).
We endow \(C^j(M;E)\) (\(\E^{(\omega)}(M;E)\)) with the initial topology with respect to the mappings
\begin{gather*}
    C^j(M;E) \to C^j(\phi(U), E), \, f \mapsto f \circ \phi^{-1} \\
    (\E^{(\omega)}(M;E) \to \E^{(\omega)}(\phi(U), E), \, f \mapsto f \circ \phi^{-1})
    ,
\end{gather*}
where \((\phi,U)\) runs over all regular charts of \(M\).
The spaces \(C^j(M;E)\) and \(\E^{(\omega)}(M;E)\) are sequentially complete if \(E\) is so.
We write \(C(M;E) = C^0(M;E)\), \(C^j(M) = C^j(M;\C)\), and \(\E^{(\omega)}(M) = \E^{(\omega)}(M;\C)\).
Note that Lemma \ref{comp-ra} guarantees that we could have defined \(\E^{(\omega)}(M;E)\)  simply using a given regular atlas, and that then its definition does not depend on the choice of the particular atlas.
The latter is obviously true as well for \(C^j(M;E)\).

We denote by \(C_c^j(M)\), \(j \in \N \cup \{\infty\}\), the subspace of \(C^j(M)\) consisting of elements with compact support.
We write \(C_c(M) = C^0_c(M)\) and \(\mathcal{D}(M) = C^\infty_c(M)\).
We set \(\mathcal{D}^{(\omega)}(M) = \mathcal{E}^{(\omega)}(M) \cap C_c(M)\).
The non-quasianalyticity assumption \ref{cnd:beta} again ensures the non-triviality of \(\mathcal{D}^{(\omega)}(M)\).
We endow the spaces \(C_c^j(M)\) and \(\mathcal{D}^{(\omega)}(M)\) with their natural \((LF)\)-space topology.
For \(K \subseteq M\) compact we denote by \(\mathcal{D}_K\) (\(\mathcal{D}^{(\omega)}_K\)) the subspace of \(C^\infty(M)\) (\(\E^{(\omega)}(M)\)) consisting of elements with support in \(K\).

Let \(E\) be a lcHs and let \(f \colon M \to E\).
For \(v' \in E'\) we define the mapping
\[
    \Eval{v'}{f}\colon M \to \C, \, x \mapsto \Eval{v'}{f(x)}
    .
\]
If \(f \in C^\infty(M;E)\), then \(\Eval{v'}{f} \in C^\infty(M)\) and it holds that \(X \Eval{v'}{f} = \Eval{v'}{X f}\)  for each vector field \(X\) on \(M\).

\begin{lemma}\label{weaksmooth}
    Let \(E\) be a sequentially complete lcHs and let \(f \colon M \to E\).
    Then, \(f \in C^\infty(M;E)\) if and only if \(\Eval{v'}{f} \in C^\infty(M)\) for all \(v' \in E'\).
\end{lemma}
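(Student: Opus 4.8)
The forward implication is precisely the remark made just before the statement, so the plan is to prove the converse: if $\langle v',f\rangle\in C^\infty(M)$ for every $v'\in E'$, then $f\in C^\infty(M;E)$. This is the classical principle that weak smoothness implies smoothness for maps into a sequentially complete space, and I would organize the argument in three stages. First, a reduction to the Euclidean setting: for a regular chart $(\varphi,U)$ one has $\langle v',f\circ\varphi^{-1}\rangle=\langle v',f\rangle\circ\varphi^{-1}\in C^\infty(\varphi(U))$, so by the definition of $C^\infty(M;E)$ it suffices to prove that a map $g\colon\Theta\to E$ on an open set $\Theta\subseteq\R^n$ with $\langle v',g\rangle\in C^\infty(\Theta)$ for all $v'\in E'$ lies in $C^\infty(\Theta;E)$. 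Restricting $g$ to coordinate lines further reduces this to a one-variable statement, which is the heart of the matter.

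That one-variable statement is: if $g\colon I\to E$ on an open interval $I$ satisfies $\langle v',g\rangle\in C^\infty(I)$ for all $v'$, then $g$ is strongly differentiable at every point, with $\langle v',g'\rangle=\langle v',g\rangle'$. The candidate for $g'(t)$ is the limit of the difference quotients $D_\delta g(t)=\delta^{-1}(g(t+\delta)-g(t))$, which converge weakly by hypothesis; the real work — and the step I expect to be the main obstacle — is to upgrade this to convergence in $E$. Here I would use that each $\langle v',g\rangle$ is $C^2$, not merely $C^1$: the fundamental theorem of calculus gives, on a fixed compact $[a,b]\subseteq I$ containing $t$, the bound $|\langle v',D_\delta g(t)-D_{\delta'}g(t)\rangle|\le\tfrac12(|\delta|+|\delta'|)\sup_{[a,b]}|\langle v',g\rangle''|$. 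To control the right-hand side uniformly over an equicontinuous family of functionals, note that $\langle v',g\rangle''(s)$ is the pointwise limit of the second-order difference quotients $Q(s,h,k)=(hk)^{-1}(g(s+h+k)-g(s+h)-g(s+k)+g(s))\in E$, and that the set $\mathcal{Q}$ of all such quotients, with base point ranging over a slightly larger compact subinterval of $I$ and $|h|,|k|$ bounded, is weakly bounded — because $\langle v',Q(s,h,k)\rangle$ is an average of $\langle v',g\rangle''$ — hence bounded in $E$ by Mackey's theorem. Passing to polars, for any $p\in\csn(E)$ one then obtains $p(D_\delta g(t)-D_{\delta'}g(t))\le\tfrac12(|\delta|+|\delta'|)\sup_{q\in\mathcal{Q}}p(q)$ with the last supremum finite, so $(D_\delta g(t))_{\delta\to0}$ is a Cauchy net; sequential completeness of $E$ then yields a limit $g'(t)$ (one passes to sequences $\delta_n\to0$ and interleaves them to check the limit is unambiguous), and clearly $\langle v',g'(t)\rangle=\langle v',g\rangle'(t)$.

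Granting the one-variable statement, iterating it — each derivative is again weakly smooth, and a strongly differentiable $E$-valued map is continuous — shows that a weakly smooth function of one variable is strongly $C^\infty$. Applying this along the coordinate directions in $\Theta\subseteq\R^n$ produces all iterated partial derivatives $\partial^\alpha g$ as $E$-valued functions satisfying $\langle v',\partial^\alpha g\rangle=\partial^\alpha\langle v',g\rangle$; each $\partial^\alpha g$ is again weakly smooth, and is continuous because its own partial derivatives are locally bounded in $E$ (weakly bounded, by continuity of the scalar partial derivatives, hence bounded by Mackey's theorem), so $\partial^\alpha g$ is locally Lipschitz. Since a map all of whose iterated partial derivatives exist and are continuous belongs to $C^\infty(\Theta;E)$, the chart reduction from the first stage completes the proof. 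Apart from the strong-differentiability step, the only ingredients are standard facts: weak boundedness equals boundedness, and strongly differentiable vector-valued maps are continuous.
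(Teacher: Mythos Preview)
Your argument is correct and follows the same outline as the paper: reduce via charts to open subsets of $\R^n$, and then treat the Euclidean case. The paper simply cites Schwartz \cite[Appendice, Lemme~II]{Schwartz-vectorvaluedI} for that case, whereas you supply a self-contained proof --- the standard difference-quotient argument using Mackey's theorem to bound second-order quotients --- which is essentially the proof found in that reference.
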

\begin{proof}
    The result is well-known if \(M\) is an open subset of \(\R^n\) (cf.\ \cite[Appendice Lemme II]{Schwartz-vectorvaluedI}).
    The general case follows from it by using local coordinates.
\end{proof}
Let \(U \subseteq M\) be open.
A system \(\mathbf{X} = (X_1, \ldots, X_n)\) of vector fields on \(U\) is called a \emph{frame} on \(U\) if \((\left. X_1 \right|_x, \ldots, \left. X_n \right|_x)\) is a basis of \(T_xU\) at each point \(x \in U\).
The frame \(\mathbf{X}\) is said to be regular if the vector fields \(X_1, \ldots, X_n\) are regular on \(U\).
 We write \(X^\alpha = X_{\alpha_1} \cdots X_{\alpha_j}\) for \(\alpha = (\alpha_1, \ldots, \alpha_j) \in \{1, \ldots, n\}^j\), \(j \in \N\).

Let \(\mathbf{X}\) be a regular frame on the open subset \(U\subseteq M\) and let \(E\) be a lcHs.
Let \(K\) be a compact subset of \(U\)
and \(p \in \csn(E)\).
For \(j \in \N\), we define
\[
    p_{\mathbf{X},K,j}(f) 
    = \max_{i \leq j}\max_{\alpha \in \{1, \ldots, n\}^i} \sup_{x \in K}p(X^\alpha f(x))
    , \qquad f \in C^\infty(U;E)
    .
\]
Given \(h >0\), we also define
\[
    p_{\mathbf{X}, K,\omega,h}(f) 
    = \sup_{j \in \N} \max_{\alpha \in \{1, \ldots, n\}^j} \sup_{x \in K} p(X^\alpha f(x))\exp \left(-\frac{1}{h}\varphi^*(hj)\right)
    , \qquad f \in C^\infty(U;E)
    .
\]
In the special case \(E = \C\) and \(p = \abs{ \Cdot }\), we write \(p_{\mathbf{X},K,j} = \norm{ \Cdot }_{\mathbf{X}, K,j}\) and \(p_{\mathbf{X}, K,\omega,h} = \norm{ \Cdot }_{\mathbf{X}, K,\omega,h}\).
Given \(p \in \csn(E)\), we set \(V_p = \{ v \in E \mid p(v) \leq 1 \}\) and write \(V^\circ_p\) for its polar set in \(E'\).
The bipolar theorem yields that, for all \(f \in C^\infty(U;E)\),
\begin{equation}\label{scalartovector}
    p_{\mathbf{X}, K,j}(f) = \sup_{v' \in V^\circ_p} \norm{ \Eval{v'}{f} }_{\mathbf{X}, K,j}, \qquad p_{\mathbf{X}, K,\omega,h}(f) = \sup_{v' \in V^\circ_p} \norm{ \Eval{v'}{f} }_{\mathbf{X}, K,\omega,h}.
\end{equation}
The next result will be frequently used throughout this article.
\begin{proposition}\label{switch-explicit} 
    Let \(U \subseteq M\) be open, let \(\mathbf{X}\) and \(\mathbf{Y}\) be regular frames on \(U\), and let \(E\) be a lcHs.
    \begin{enumerate}[i]
        \item \label{switch-explicit-1} 
            For all \(K \subseteq U\) compact and \(j \in \N\) there is \(C > 0\) such that for all \(p \in \csn(E)\)
            \[
        p_{\mathbf{X},K,j}(f) 
                \leq C p_{\mathbf{Y},K,j}(f)
                , \qquad \forall f \in C^j(U;E)
                .
            \]
        \item \label{switch-explicit-2} 
            For all \(K \subseteq U\) compact and \(h >0\) there are \(C,k > 0\) such that for all \(p \in \csn(E)\)
            \[
    p_{\mathbf{X},K,\omega, h}(f)
                \leq C p_{\mathbf{Y},K,\omega,k}(f)
                , \qquad \forall f \in C^\infty(U;E)
                .
            \]
    \end{enumerate}
\end{proposition}
\begin{proof}
    It suffices to consider the case \(E = \C\), the general case follows from it by \eqref{scalartovector}.
    Furthermore, using local coordinates and a compactness argument, we may assume that \(U\) is an open subset of \(\R^n\).
    The statement \ref{switch-explicit-1} is clear.
    We now show \ref{switch-explicit-2}.
    Let \(K\) be an arbitrary compact subset of \(U\). 
    We define, for \(j \in \N\), 
    \[
        \abs{f}_{{\mathbf{X}},K,j} 
        = \max_{\alpha \in \{1, \ldots, n\}^j} \sup_{x\in K} \abs{X^\alpha f(x)}
        , \quad 
        \abs{f}_{{\mathbf{Y}},K,j} 
        = \max_{\alpha \in \{1, \ldots, n\}^j} \sup_{x\in K} \abs{Y^\alpha f(x)}
        , \qquad f \in C^\infty(U)
        .
    \]
    There are \(a_{\ell,m} \in \mathcal{A}(U)\), \(\ell,m = 1, \ldots,n\), such that
    \[
        X_{\ell}(f)(x) 
        = \sum_{m=1}^n a_{\ell,m}(x) Y_m(f)(x)
        , \qquad f \in C^\infty(U)
        ,
    \]
    for all \(\ell = 1, \ldots,n\).
    Since \(a_{\ell,m} \in \mathcal{A}(U)\) there is \(H\geq 1\) such that
    \[
        \abs{a_{\ell,m}}_{{\mathbf{X}},K,j} 
        \leq H^{j+1} j!
        , \qquad j \in \N
        ,
    \]
    for all \(\ell,m = 1, \ldots,n\) \cite[Theorem 3.1]{Goodman1} (which is originally a result due to Nelson \cite{Nelson}).
    We claim that for all \(j\in \N \setminus \{0\}\)
    \begin{equation}
        \label{claim}
        \abs{f}_{{\mathbf{X}},K,j} 
        \leq (2nH)^j \sum_{i=1}^j \binom{j}{i} (j-i)! \abs{f}_{{\mathbf{Y}},K,i}, \qquad  f \in C^\infty(U).
    \end{equation}
    Before we show the claim, let us show how the result follows from it.
    Let \(h >0\) be arbitrary.
    By \eqref{M12}, there are \(C,k >0\) such that
    \begin{equation}
        \label{prop11}
        \frac{1}{k}\varphi^*(kt) + t\log(4nH) \leq \frac{1}{h}\varphi^*(ht) + \log C, \qquad t \geq 0.
    \end{equation}
    The convexity of \(\varphi^*\), \eqref{claim}, and \eqref{prop11} imply that for all for \(j\in \N \setminus \{0\}\) and \(f \in C^\infty(U)\)
    \begin{align*}
        \abs{f}_{{\mathbf{X}},K,j} \exp \left(-\frac{1}{h}\varphi^*(hj)\right) 
        &\leq \frac{1}{2^j} \sum_{i=1}^j \binom{j}{i} (4nH)^{j-i}(j-i)!\exp \left(-\frac{1}{h}\varphi^*(h(j-i))\right) \times \\
        & \qquad (4nH)^{i}\abs{f}_{{\mathbf{Y}},K,i}\exp \left(-\frac{1}{h}\varphi^*(hi)\right) \\
        &\leq C' \norm{f}_{{\mathbf{Y}},K,\omega,k}
        ,
    \end{align*}
    where
    \[
        C' 
        = C\sup_{l \in \N} (4nH)^{l}l!\exp \left(-\frac{1}{h}\varphi^*(hl)\right) 
        < \infty
        ,
    \]
    see \eqref{NA}.
    This shows the result.
    We now prove \eqref{claim} by induction.
    The case \(j =1\) is clear (recall that \(H \geq 1\)).
    Let \(j\in \N \setminus \{0\}\) and assume that \eqref{claim} holds for all \(l = 1, \ldots, j\).
    Let \(f \in C^\infty(U)\) be arbitrary.
    For all \(\alpha \in \{1, \ldots, n\}^j\), \(\ell \in \{1, \ldots, n\}\), and \(x \in K\),
    \begin{align*}
         &\abs{X^\alpha X_\ell f(x)} \leq \sum_{m=1}^n \abs{X^{\alpha}(a_{\ell,m}(x) Y_m(f)(x))} \\
         &\leq \sum_{m=1}^n \sum_{r=0}^j \binom{j}{r} \abs{a_{\ell,m}}_{{\mathbf{X}},K,j-r} \abs{Y_m(f)}_{{\mathbf{X}},K,r} \\
         &\leq n\left(H^{j+1}j! \abs{f}_{{\mathbf{Y}},K,1} + \sum_{r=1}^j \binom{j}{r} H^{j-r+1} (j-r)!  (2nH)^r \sum_{i=1}^r \binom{r}{i} (r-i)! \abs{f}_{{\mathbf{Y}},K,i+1}\right) \\
         &\leq (nH)^{j+1}\left(j! \abs{f}_{{\mathbf{Y}},K,1} + \sum_{r=1}^j\sum_{i=1}^r \frac{j!}{i!} 2^r \abs{f}_{{\mathbf{Y}},K,i+1}\right) \\
         &\leq (2nH)^{j+1}\left(j! \abs{f}_{{\mathbf{Y}},K,1} + \sum_{i=1}^j \binom{j+1}{i+1}(j-i)! \abs{f}_{{\mathbf{Y}},K,i+1}\right) \\
         &\leq (2nH)^{j+1} \sum_{i=1}^{j+1} \binom{j+1}{i}(j+1-i)! \abs{f}_{{\mathbf{Y}},K,i}.
    \end{align*}
    This shows the induction step. 
\end{proof}

Proposition \ref{switch-explicit} yields the following result.

\begin{proposition}\label{main-frames} 
    Let \((U_i)_{i \in I}\) be an open covering of \(M\), let \(\mathbf{X}_i\) be a regular frame on \(U_i\) for each \(i \in I\), and let \(E\) be a lcHs.
    \begin{enumerate}[i]
        \item \label{main-frames-1}
            The locally convex topology of \(C^\infty(M;E)\) is generated by the system of seminorms \(\{ p_{\mathbf{X}_i,K_i,j} \mid i \in I, K_i \subseteq U_i \text{ compact}, j \in \N, p \in \csn(E) \}\).
        \item \label{main-frames-2} 
            \(f \in C^\infty(M;E)\) belongs to \(\E^{(\omega)}(M;E)\) if and only if \(p_{\mathbf{X}_i, K_i,\omega,h}(f) < \infty\) for all \(i \in I\), \(K_i \subseteq U_i\) compact, \(h >0\), and \(p \in \csn(E)\).
            Moreover, the locally convex topology of \(\E^{(\omega)}(M;E)\) is generated by the system of seminorms \(\{ p_{\mathbf{X}_i,K_i,\omega,h} \mid i \in I, K_i \subseteq U_i \text{ compact}, h>0, p \in \csn(E) \}\).
    \end{enumerate}
\end{proposition}

\begin{remark}
    All results from \cref{sect-Rn,sect-Man} remain valid if one replaces the condition \ref{cnd:beta} on \(\omega\) by the weaker assumption
    \(\omega(t) = o(t)\).
\end{remark}

\begin{remark}
    The theorem of iterates from \cite[Corollary 3.19]{FS} also yields the set equality part from Proposition \ref{main-frames}\ref{main-frames-2}, that is, the equivalence of \(f\in \E^{(\omega)}(M;E)\) to having \(p_{\mathbf{X}_i, K_i,\omega,h}(f) < \infty\) for all of these seminorms.
    However, it should be noticed that the method from \cite{FS}, being based on reducing the Beurling case to the Roumieu case (see the proof of \cite[Proposition 3.2]{FS}), does not directly deliver the required continuity estimates needed to conclude that the system \(\{ p_{\mathbf{X}_i,K_i,\omega,h} \mid i \in I, K_i \subseteq U_i \text{ compact}, h>0, p \in \csn(E) \}\) generates the locally convex topology of \(\E^{(\omega)}(M;E)\).
\end{remark}

\section{Spaces of smooth and ultradifferentiable vectors associated with Lie group representations}
\label{Section smooth and ultradifferentiable vectors}
Throughout the rest of this article we fix a (real) Lie group \(G\) of dimension \(n\) with identity element \(e\).
In the ultradifferentiable case, we shall always tacitly view the underlying manifold structure of \(G\) as a real analytic one (cf.\ \cref{sect-Man}).
Given a function \(f\) on \(G\) and \(x \in G\), we define
\[
    \check{f}(y) = f(y^{-1})
    , \qquad 
    L_x f(y) = f(x^{-1}y)
    , \qquad 
    R_x f(h) = f(yx)
    , \qquad 
    y \in G
    .
\]
A frame \({\mathbf{X}} = \{X_1, \ldots, X_n\}\) on \(G\) is said to be left-invariant (right-invariant) if the vector fields \(X_1, \ldots, X_n\) are left-invariant (right-invariant).
Every left-invariant (right-invariant) frame on \(G\) is automatically regular and corresponds to the left-invariant (right-invariant) vector fields associated with a basis in the Lie algebra of \(G\).
We fix a left Haar measure on \(G\).
Unless explicitly stated otherwise, all integrals on \(G\) will be meant with respect to this measure.
All vector-valued integrals in this article are to be interpreted in the weak sense.
If \(E\) is a sequentially complete lcHs, the integral \(\int_G f(x) dx\) exists for all \(f \in C_c(G;E)\).
Let \(E\) be a lcHs.
We denote by \(\operatorname{GL}(E)\) the group of topological isomorphisms of \(E\).
By a \emph{representation} of \(G\) on \(E\) we mean a group homomorphism \(\pi\colon G \to \operatorname{GL}(E)\).
The representation \(\pi\) is called \emph{locally equicontinuous} if \(\{ \pi(x) \mid x \in K\}\) is equicontinuous for each compact set \(K \subseteq G\).
 The mapping
\[
    \gamma_v \colon G \to E, \, x \mapsto \pi(x)v
\]
is called the \emph{orbit} of a given vector \(v\in E\).
We denote by \(E^0\) the space of all those \(v \in E\) such that \(\gamma_v \in C(G,E)\).
The representation is said to be \emph{continuous} if \(E= E^0\).

\begin{remark}
    \label{rmk:continuity-implies-local-equicontinuity}
    Each continuous representation of \(G\) on a barrelled lcHs \(E\) is automatically locally equicontinuous, as follows from the Banach-Steinhaus theorem.
    In particular, this holds if \(E\) is a Fr\'{e}chet space or, more generally, an \((LF)\)-space.
\end{remark}

Let \(\pi\) be a representation of \(G\) on a lcHs \(E\).
Let \(j \in \N \cup \{\infty\}\). 
We define \(E^j\) (\(E^{(\omega)}\)) as the space consisting of all \(v \in E\) such that \(\gamma_v \in C^j(G,E)\) (\(\gamma_v \in \E^{(\omega)}(G,E)\)) and endow this space with the initial topology with respect to the mapping
\begin{gather*}
   E^{j} \to C^j(G,E), \, v \mapsto \gamma_v \\
    (E^{(\omega)} \to \E^{(\omega)}(G,E), \, v \mapsto \gamma_v)
    .
\end{gather*}
Note that \(E^j\) and \(E^{(\omega)}\) are sequentially complete if \(E\) is so.
Let \(\mathbf{X}\) be a regular frame on \(G\).
Let \(K \subseteq G\) be compact and \(p \in \csn(E)\).
We write \(p_{\mathbf{X},K,j}(v) = p_{\mathbf{X},K,j}(\gamma_v)\), \(j \in \N\), and \(p_{\mathbf{X},K,\omega, h}(v) = p_{\mathbf{X},K,\omega, h}(\gamma_v)\), \(h >0\), for \(v \in E^\infty\).
Proposition \ref{main-frames} implies that the locally convex topology of \(E^\infty\) is generated by the system of seminorms \(\{ p_{\mathbf{X},K,j} \mid K \subseteq G \text{ compact}, j \in \N, p \in \csn(E) \}\).
Moreover,
\[
    E^{(\omega)}
    = \{v\in E^\infty \mid p_{\mathbf{X}, K,\omega,h}(v) < \infty \text{ for all } K \subseteq G \text{ compact}, h>0, p \in \csn(E) \}
\]
and the system of seminorms \(\{ p_{\mathbf{X},K,\omega,h} \mid K \subseteq G \text{ compact}, h>0, p \in \csn(E) \}\) generates its locally convex topology.

Let \(\pi\) be again a representation of \(G\) on a lcHs \(E\) and let \({\mathbf{X}}\) be a left-invariant frame on \(G\).
For $j \in \N$ and \(\alpha \in \{1, \ldots, n\}^j\), we define (the induced infinitesimal action of the representation)
\begin{equation}\label{eq:infinitesimalaction}
   \pi(X^{\alpha})v
    = X^\alpha \gamma_v(e)
    , \qquad v \in E^{j}
    .
\end{equation}
Since \({\mathbf{X}}\) is left-invariant, we obtain
\begin{equation}
    \label{orbit-prop}
    \gamma_{{\pi(X^{\alpha})v}} = X^\alpha \gamma_v, \qquad v \in E^{j}.
\end{equation}
Let \(p \in \csn(E)\).
For \(j \in \N\), we define
\[
    p_{{\mathbf{X}},j}(v) 
    = \max_{i \leq j} \max_{\alpha \in \{1, \ldots, n\}^i} p({\pi(X^{\alpha})v})
    , \qquad v \in E^\infty
    ,
\]
and, for \(h >0\), 
\begin{equation}
    \label{eq seminorms ultra vectors}
    p_{{\mathbf{X}},\omega,h}(v) 
    = \sup_{j \in \N} \max_{\alpha \in \{1, \ldots, n\}^j} p({\pi(X^{\alpha})v})\exp \left(-\frac{1}{h}\varphi^*(hj)\right)
    , \qquad v \in E^\infty
    .
\end{equation}
\begin{proposition}\label{main-frames-vec-li} 
    Let \(\pi\) be a locally equicontinuous representation of \(G\) on a lcHs \(E\) and let \(\mathbf{X}\) be a left-invariant frame on \(G\).
    \begin{enumerate}[i]
        \item \label{main-frames-vec-li-1}
            The locally convex topology of \(E^\infty\) is generated by the system of seminorms \(\{ p_{\mathbf{X},j} \mid j \in \N, p \in \csn(E) \}\).
        \item \label{main-frames-vec-li-2} 
            \(v \in E^\infty\) belongs to \(E^{(\omega)}\) if and only if \(p_{\mathbf{X},\omega,h}(v) < \infty\) for all \(h > 0\) and \(p \in \csn(E)\).
            Moreover, the locally convex topology of \(E^{(\omega)}\) is generated by the system of seminorms \(\{ p_{\mathbf{X},\omega,h} \mid h >0, p \in \csn(E) \}\).
    \end{enumerate}
\end{proposition}
\begin{proof}
    In view of \eqref{orbit-prop}, this is a consequence of Proposition \ref{main-frames} and the fact that \(\pi\) is locally equicontinuous.
\end{proof}

\begin{example}
    For the Gevrey weights \(\omega_s\), \cref{gevrey1} implies that the system of seminorms \(\{ p_{\mathbf{X},\omega_s,h} \mid h >0, p \in \csn(E) \}\) is equivalent to \(\{\widetilde{p}_{\mathbf{X},\omega_s,h} \mid h >0, p \in \csn(E) \}\), where
    \[
        \widetilde{p}_{\mathbf{X},\omega_s,h}(v)
        = \sup_{j \in \N} \max_{\alpha \in \{1, \ldots, n\}^j} \frac{p({\pi(X^{\alpha})v})}{h^j j!^{1/s}}
        , \qquad v \in E^\infty
        .
    \]
    Hence, \(\{ \widetilde{p}_{\mathbf{X},\omega_s,h} \mid h >0, p \in \csn(E) \}\) also generates the locally convex topology of \(E^{(\omega)}\).
    These spaces and their Roumieu type variants were considered by Goodman and Wallach \cite{Goodman1,Goodman2,GW} (cf.\ the introduction).
\end{example}

Let \(E\) be a sequentially complete lcHs.
For \(f \in C(G,E)\) and \(\chi \in C_c(G)\), we define their (left-)convolution as
\[
    (f \ast \chi)(x) = \int_{G} f(y) \chi(y^{-1}x) dy
    = \int_{G} f(xy) \chi(y^{-1}) dy
    , \qquad x \in G
    .
\]
Note that
\[
    f \ast \Molf
    = \int_{\LieGroup} f(y) \, L_{y}\Molf dy
    = \int_{\LieGroup} R_{y}f \, \Molf(y^{-1}) dy
    .
\]
Let \(j \in \N \cup \{ \infty \}\).
If \(\chi \in C^j_c(G)\) or \(f \in C^j(G,E)\), then \(f \ast \Molf \in C^j(G,E)\).
Let \(X\) be a vector field on \(G\).
If \(X\) is left-invariant, then
\[
    X(f \ast \Molf) 
    = f \ast (X \Molf )
    , \qquad f \in C(G,E), \chi \in C_c^1(G)
    ,
\]
while if \(X\) is right-invariant, then
\[
    X(f \ast \Molf) 
    = (Xf) \ast \Molf
    , \qquad f \in C^1(G,E), \chi \in C_c(G)
    .
\]
If \(E = \C\), these statements hold under the weaker assumption that \(f \in \Lloc(G)\).

Let \(\pi\) be a representation of \(G\) on a sequentially complete lcHs \(E\).
For \(\chi \in C_c(G)\), we define the operator $\Pi(\chi): E^0\to E^0$ by
\[
    \Pi(\chi)v 
    = \int_{\LieGroup} \gamma_{v}(x) \Molf(x^{-1}) dx, \qquad v\in E^0.
\]
Note that \(\gamma_{\Pi(\chi)v} = \gamma_v \ast \chi\).
Consequently, for \(j \in \N \cup \{ \infty \}\), it holds that \(\Pi(\chi)v \in E^j\) if \(\chi \in C^j_c(G)\).

Let \(\bf{X}\) be a regular frame on \(G\).
For \(j \in \N\) and \(h >0\), we define
\[
    \norm{ f}_{{\bf{X}},j} 
    = \sup_{x \in G} \max_{i \leq j} \max_{\alpha \in \{ 1, \ldots ,n\}^i} \abs{ X^\alpha f(x)}
    , \qquad f \in C^\infty(G),
\]
and
\[
    \norm{ f}_{{\bf{X}},\omega,h} 
    = \sup_{x \in G} \sup_{j \in\N} \max_{\alpha \in \{ 1, \ldots ,n\}^j} \abs{ X^\alpha f(x)}\exp \left(-\frac{1}{h}\varphi^*(hj)\right)
    , \qquad f \in C^{\infty}(G)
    .
\]

\begin{lemma}\label{smoothening}
    Let \(\pi\) be a locally equicontinuous representation of \(G\) on \(E\) and let \(\bf{X}\) be a regular frame on \(G\).
    Let \(K,L \subseteq \LieGroup\) be compact and \(p \in \csn(E)\).
    \begin{enumerate}[i]
        \item \label{smoothening-1}
            There is \(q \in \csn(\Vecs)\) such that for all \(j \in \N\) there is \(C >0\) such that
            \[
                p_{{\bf{X}},K,j}(\Pi(\chi)v) 
                \leq C\norm{ \chi }_{{\bf{X}},j} q(v) 
                , \qquad \forall v \in E^0, \chi \in C^j_c(G) \text{ with } \operatorname{supp} \chi \subseteq L
                .
            \]
        \item \label{smoothening-2} 
            There is \(q \in \csn(\Vecs)\) such that for all \(h >0\) there are \(C,k >0\) such that
            \[
                p_{{\bf{X}},K,\omega,h}(\Pi(\chi)v) 
                \leq C \norm{ \chi }_{{\bf{X}},\omega,k} q(v)
                , \qquad \forall v \in E^0, \chi \in \mathcal{D}_L
                .
            \]
             Moreover, if \({\bf{X}}\) is left-invariant, this holds with \(k = h\).
    \end{enumerate}
\end{lemma}

\begin{proof}
First, suppose \(\bf{X}\) is left-invariant.
Then, for all \(v \in E^0\), \(\chi \in \mathcal{D}(G)\), and \(\alpha \in \{1, \ldots, n\}^j\), \(j \in \N\), it holds that
    \[
        X^\alpha \gamma_{\Pi(\chi)v} 
        = X^\alpha( \gamma_v \ast \chi) 
        = \gamma_{v} \ast (X^\alpha \chi) 
        = \int_G R_y \gamma_v \, X^{\alpha}\Molf(y^{-1}) dy
        .
    \]
    The result now follows from the fact that \(\pi\) is locally equicontinuous.
      
  The general case follows by applying Proposition \ref{switch-explicit}.
\end{proof}

\section{Some auxiliary results}
\label{sect-aux}
\subsection{An approximation result}\label{subs-approx} We fix a regular chart \((\phi,U)\) on \(G\) with \(\phi(e) = 0\) and \(\overline{B}(0,1) \subseteq \phi(U)\).
We define \(S_\varepsilon = \phi^{-1}(\overline{B}(0,\varepsilon))\) for \(\varepsilon \in (0,1]\).

\begin{lemma} \label{MVT}
    Let \({\bf{X}} = (X_1, \ldots, X_n)\) be a smooth frame on \(G\) and let \(E\) be a lcHs.
    For all compact subsets \(K \subseteq G\)  there is \(C >0\) such that for all \(p \in \csn(E)\) and \(\varepsilon \in (0,1]\)
    \begin{equation}
        \label{ineq-1}
        \sup_{x \in K}\sup_{y \in S_\varepsilon} p(f(x) - f(xy)) 
        \leq \varepsilon C \sup_{x \in K} \sup_{y \in S_1} \max_{i \in \{ 1, \ldots, n\}} p( X_i f(xy))
        , \qquad f \in C^\infty(G;E)
        .
\end{equation}
\end{lemma}
\begin{proof}
    It suffices to consider the case \(E = \C\), the general case follows from it by \eqref{scalartovector}.
    Furthermore, we may assume that \({\bf{X}}\) is left-invariant.
    In this case, we will show the following stronger property:
    There is \(C >0\) such that for all \(\varepsilon \in (0,1]\) it holds that
    \[
        \sup_{y \in S_\varepsilon} \abs{f(x) - f(xy)} 
        \leq \varepsilon C \sup_{y \in S_1} \max_{i \in \{ 1, \ldots, n\}} \abs{X_i f(xy)}
        , \qquad f \in C^\infty(G), x \in G
        .
    \]
    Set \(\psi = \phi^{-1}\) and denote by \(\psi_\ast \partial_i\) the pushforward of \(\partial_i\) by \(\psi\), \(i =1, \ldots,n\).
    There is \(C >0\) such that
    \begin{equation}
        \label{invariance}
        \max_{i \in \{ 1, \ldots, n\}} \abs{ (\psi_\ast \partial_i) f(y) }\leq C\max_{i \in \{ 1, \ldots, n\}} \abs{ X_i f(y)}, \qquad f \in C^\infty(G), y \in S_1.
    \end{equation}
    Let \(f \in C^\infty(G)\) and \(x \in G\) be arbitrary.
    The mean value theorem implies that for all \(\varepsilon \in (0,1]\)
    \begin{align*}
        \sup_{y \in S_\varepsilon} \abs{f(x) - f(xy)} 
        &= \sup_{y \in S_\varepsilon} \abs{ (L_{x^{-1}} f)(e) - (L_{x^{-1}} f)(y)} \\
        &= \sup_{\xi \in \overline{B}(0,\varepsilon)} \abs{ ((L_{x^{-1}} f) \circ \psi)(0) - ((L_{x^{-1}} f) \circ \psi)(\xi)} \\
        &\leq \varepsilon \sqrt{n}\sup_{t \in \overline{B}(0,1)} \max_{i \in \{ 1, \ldots, n\}}  \abs{ \partial_i ((L_{x^{-1}} f) \circ \psi)(t)} \\
        &= \varepsilon \sqrt{n}\sup_{y \in S_1 } \max_{i \in \{ 1, \ldots, n\}} \abs{ (\psi_\ast \partial_i)(L_{x^{-1}} f) (y) }.
    \end{align*}
    Applying \eqref{invariance} to \(L_{x^{-1}}f\) and using the fact that \({\bf{X}}\) is left-invariant, we find that
    \[
        \sup_{y \in S_\varepsilon} \abs{f(x) - f(xy)} 
        \leq \varepsilon C\sqrt{n}\sup_{y \in S_1 } \max_{i \in \{ 1, \ldots, n\}} \abs{ X_i(L_{x^{-1}} f) (y) } 
        = \varepsilon C\sqrt{n}\sup_{y \in S_1 } \max_{i \in \{ 1, \ldots, n\}} \abs{ X_if (xy) } 
        .
    \]
\end{proof}

We are ready to show the main result of this subsection.
For the Gevrey weights \(\omega_s\), it may be considered as a quantified version of \cite[Theorem 3.2 and Corollary 3.1]{Goodman1}.

\begin{proposition}\label{approx}
    Let \(\pi\) be a representation of \(G\) on a sequentially complete lcHs \(E\) and let \(\bf{X}\) be a regular frame on \(G\).
    Let \(\ (\Molf_{\varepsilon})_{\varepsilon \in (0,1]} \subseteq \mathcal{D}(G) \) be such that \(\chi_\varepsilon \geq 0\), \(\operatorname{supp} \check \chi_\varepsilon \subseteq S_\varepsilon\), and \(\int_G \check{\chi}_\varepsilon(x) dx = 1\) for all \(\varepsilon \in (0,1]\).
    \begin{enumerate}[i]
        \item \label{approx-1}
            For all \(K \subseteq \LieGroup\) compact and \(j \in \N\) there is \(C>0\) such that for all \(p \in \csn(E)\) and \(\varepsilon \in (0,1]\)
            \[
                p_{{\mathbf{X}},K,j}(v-\Pi(\Molf_{\varepsilon})(v)) 
                \leq \varepsilon  Cp_{{\mathbf{X}},KS_1,j+1}(v)
                , \qquad v \in E^\infty
                .
            \]
        \item \label{approx-2} 
            Suppose that \((\Molf_{\varepsilon})_{\varepsilon \in (0,1]} \subseteq \mathcal{D}^{(\omega)}(G) \).
            For all \(h >0\) there is \(k >0\) such that for all \(K \subseteq \LieGroup\) compact there is \(C >0\) such that for all \(p \in \csn(E)\) and \(\varepsilon \in (0,1]\)
            \[
                p_{{\mathbf{X}},K,\omega,h}(v-\Pi(\Molf_{\varepsilon})(v)) 
                \leq \varepsilon C p_{{\mathbf{X}},KS_1,\omega,k}(v)
                , \qquad v \in E^{(\omega)}
                .
            \]
    \end{enumerate}
\end{proposition}
\begin{proof}
    We will only show \ref{approx-2} as the proof of \ref{approx-1} is similar but simpler.
    In view of Proposition \ref{switch-explicit}\ref{switch-explicit-2}, we may assume that \({\bf{X}}= (X_1, \ldots, X_n)\) is right-invariant.
    Condition \eqref{M12} implies that there are \(C',k >0\) such that
    \begin{equation}
        \label{ineq-2}
        \frac{1}{k}\varphi^*(k(t+1)) \leq \frac{1}{h}\varphi^*(ht) + \log C', \qquad t \geq 0.
    \end{equation}
    Let \(K\) be an arbitrary compact subset of \(G\). We apply Lemma \ref{MVT} to find $C>0$ such that \eqref{ineq-1} holds. 
    Let \(p \in \csn(E)\), \(\varepsilon \in (0,1]\), and \(v \in E^{(\omega)}\) be arbitrary.
    Since \(\bf{X}\) is right-invariant, we find that for all \(x \in G\) and \(\alpha \in \{1, \ldots, n \}^j\), \(j \in \N\),
    \begin{align*}
        X^\alpha(\gamma_v - \gamma_{\Pi(\chi_\varepsilon)v})(x)
        &= X^\alpha(\gamma_v - \gamma_v \ast \chi_\varepsilon)(x)
        = X^\alpha\gamma_v(x) - (X^\alpha \gamma_v)\ast \chi_\varepsilon(x) \\
        &= \int_G (X^\alpha\gamma_v(x) -X^\alpha \gamma_v(xy)) \check\chi_\varepsilon(y) dy.
    \end{align*}
    Hence, applying \eqref{ineq-1} to \(X^\alpha \gamma_v\), we obtain that
    \[
        \sup_{x \in K} p(X^\alpha(\gamma_v - \gamma_{\Pi(\chi_\varepsilon)v})(x)) 
        \leq \varepsilon C \sup_{x \in K}\sup_{y \in S_1} \max_{i \in \{ 1, \ldots, n\}} p(X_iX^\alpha\gamma_v(xy))
        .
    \]
Combining the latter inequality with \eqref{ineq-2}, we find that
    \[
        p_{{\mathbf{X}},K,\omega,h}(v-\Pi(\Molf_{\varepsilon})(v)) 
        \leq \varepsilon CC' p_{{\mathbf{X}},KS_{1},\omega,k}(v)
        .
        \qedhere
    \]
\end{proof}

\subsection{The parametrix method} \label{subs-parametrix} Let \(E\) be a sequentially complete lcHs.
Given a power series \(P(z) = \sum_{i =0}^\infty a_i z^i\), \(a_i \in \C\), we formally define
\[
    P(D) f 
    = \sum_{i =0}^\infty a_i f^{(i)}
    , \qquad f \in C^\infty(\R;E)
    .
\]
An entire function \(P(z) = \sum_{i =0}^\infty a_i z^i\) is called an \emph{ultrapolynomial of class }\((\omega)\) if there is \(H >0\) such that
\[
    \sup_{z \in \C} \abs{P(z)}e^{-H\omega(\abs{z})} 
    < \infty
    .
\]
In such a case, by using the Cauchy estimates, we find that
\[
    \sup_{i \in \N} \abs{a_i}\exp \left(H\varphi^*\left(\frac{i}{H}\right)\right) 
    < \infty
    .
\]
Hence, \eqref{M12} and the convexity of \(\varphi^*\) imply that for all \(h >0\) there is \(k >0\) such that
\begin{equation}
    \label{convUP}
    \sup_{j \in \N} \exp \left(-\frac{1}{h}\varphi^*(hj)\right) \sum_{i =0}^\infty \abs{a_i} \exp \left(\frac{1}{k}\varphi^*(k(i+j))\right) < \infty.
\end{equation}

Given an open set \(\Theta \subseteq \R^n\) and \(h >0\), we set \(\mathcal{D}^{\omega,h}(\Theta) = \mathcal{D}(\Theta) \bigcap \mathcal{E}^{\omega,h}(\R^n)\).
We denote by \(\mathcal{D}'(\R)\) (\(\mathcal{D}'^{(\omega)}(\R)\)) the dual of \(\mathcal{D}(\R)\) (\(\mathcal{D}^{(\omega)}(\R)\)).
We will assume the reader is familiar with the basic aspects about the spaces \(\mathcal{D}'(\R)\) and \(\mathcal{D}'^{(\omega)}(\R)\); see \cite{BMT,Schwartz} for more information.
The following standard result is fundamental for us.

\begin{proposition} \label{parametrix} 
    Let \(r >0\).
    \begin{enumerate}[i]
        \item \label{parametrix-1}
            For all \(j \in \N\) there are a polynomial \(P\) and \(\psi_0, \psi_1 \in C^j_c((-r,r))\) such that
            \(P(D) \psi_0 + \psi_1 = \delta \text{ in } \mathcal{D}'(\R)\).

        \item \label{parametrix-2}
            For all \(k' >0\) there are an ultrapolynomial \(P(z) = \sum_{i =0}^\infty a_i z^i\) of class \((\omega)\) and \(\psi_0, \psi_1 \in \mathcal{D}^{\omega,k'}((-r,r))\) such that
            \(P(D) \psi_0 + \psi_1 = \delta \text{ in } \mathcal{D}'^{(\omega)}(\R)\).
    \end{enumerate}
\end{proposition}
\begin{proof}
    \ref{parametrix-1} 
    Let \(H\) be the Heaviside function, that is, the indicator function of \([0,\infty)\).
    Choose \(\psi \in \mathcal{D}((-r,r))\) such that \(\psi =1\) on a neighborhood of \(0\).
    Then, \(P(z) = z^{j+2}\), \(\psi_0(x) = \frac{x^{j+1}}{(j+1)!}H(x) \psi(x)\), and \(\psi_1(x) = P(D)(\frac{x^{j+1}}{(j+1)!}H(x)(1-\psi(x)))\) verify all requirements. \\
    \ref{parametrix-2} 
    This is shown in \cite[Corollary 2.6]{Gomez-Collado}.
\end{proof}

Let \(\pi\) be a representation of \(G\) on a sequentially complete lcHs \(E\).
Let \({\bf{X}} = (X_1, \ldots, X_n)\) be a left-invariant frame on \(G\).
For \(h >0\), we define \(E^{\omega,h}_{\mathbf{X}}\) as the space of all \(v \in E^\infty\) such that \(p_{{\bf{X}},K,\omega,h}(v) < \infty\) for all \(p \in \operatorname{csn}(E)\) {and \(K \subseteq G\) compact}.
We endow \(E^{\omega,h}_{\mathbf{X}}\) with the locally convex topology generated by the system of seminorms \(\{ p_{\mathbf{X},K,\omega,h} \mid p \in \csn(E), K \subseteq G \text{ compact} \}\).
Note that \(E^{\omega,h}_{\mathbf{X}}\) is a sequentially complete lcHs.
Let \(P(z) = \sum_{i =0}^\infty a_i z^i\) be an ultrapolynomial of class \((\omega)\).
Let \(h >0\) be arbitrary and choose \(k >0\) such that \eqref{convUP} holds.
Then, the linear mapping
\[
  {\pi( P(X_j))}\colon E^{\omega,k}_{\mathbf{X}} \to E^{\omega,h}_{\mathbf{X}}, \quad   {\pi(P(X_j))} v = \sum_{i =0}^\infty a_i    {\pi(X_j^i)}v
\]
is continuous and the series \(   {\pi(P(X_j))} v= \sum_{i =0}^\infty a_i    {\pi(X_j^i)}v\) converges absolutely in \(E^{\omega,h}_{\mathbf{X}}\) for each \(v \in E^{\omega,k}_{\mathbf{X}}\) and each  \(j =1, \ldots,n\).
Our goal in this subsection is to show the following parametrix type result.

\begin{theorem}\label{par-final} 
   Let \(\pi\) be a representation of \(G\) on a sequentially complete lcHs \(E\), let \({\bf{X}} = (X_1, \ldots, X_n)\) be a left-invariant frame on \(G\), and let \(U\) be an open neighborhood of \(e\).
    \begin{enumerate}[i]
        \item \label{par-final-1}
            For all \(j \in \N\) there are a polynomial \(P\) and \(\chi_\theta \in C^j_c(U)\), \(\theta = (\theta_1, \ldots, \theta_n) \in \{0,1\}^n\), such that for all \(v \in E^\infty\)
            \begin{equation}
                \label{par-G}
                v = \sum_{\theta \in \{0,1\}^n} \Pi(\chi_\theta)(   {\pi(P_{\theta_n}(X_n)) \cdots \pi(P_{\theta_1}(X_1))v)},
            \end{equation}
            where \(P_0 = P\) and \(P_1 =1\).
        \item \label{par-final-2} 
            For all \(h,h' >0\) there are \(k >0\), an ultrapolynomial \(P\) of class \((\omega)\), \(\chi_\theta \in \mathcal{D}(U)\) with \(\norm{\chi_\theta}_{{\mathbf{X}},\omega,h'} < \infty\) , \(\theta \in \{0,1\}^n\) such that
            \eqref{par-G} holds for all \(v \in E^{\omega,k}_{\mathbf{X}}\), where \(P_0 = P\) and \(P_1 =1\), and the linear mapping
            \[
             {  \pi(  P_{\theta_n}(X_n)) \cdots\pi( P_{\theta_1}(X_1))}\colon E^{\omega,k}_{\mathbf{X}} \to E^{\omega,h}_{\mathbf{X}}
            \]
            is continuous for each \(\theta \in \{0,1\}^n\).
    \end{enumerate}
\end{theorem}

 We will show \Cref{par-final} by combining \cref{parametrix} with the aid of a technique due to Dixmier and Malliavin \cite{D-M}.
We need various notions and results in preparation.

\begin{lemma} \label{par-cor-eucl} 
    Let \(E\) be a sequentially complete lcHs.
    \begin{enumerate}[i]
        \item \label{par-cor-eucl-1}
            Let \(P\) be a polynomial and \(\psi_0,\psi_1 \in C_c(\R)\) such that \(P(D) \psi_0 + \psi_1 = \delta \text{ in } \mathcal{D}'(\R)\).
            Then, for all \(f \in C^\infty(\R;E)\)
            \begin{equation}
                \label{decomp-0}
                f = (P(D)f) \ast \psi_0 + f \ast \psi_1.
            \end{equation}
        \item \label{par-cor-eucl-2} 
            Let \(P(z) = \sum_{i =0}^\infty a_i z^i\) be an ultrapolynomial of class \((\omega)\) and \(\psi_0,\psi_1 \in C_c(\R)\) such that \(P(D) \psi_0 + \psi_1 = \delta \text{ in } \mathcal{D}'^{(\omega)}(\R)\).
            Let \(f \in C^\infty(\R;E)\) be such that \(P(D)f = \sum_{i = 0}^\infty a_i f^{(i)}\) converges absolutely in \(C(\R;E)\).
            Then, \eqref{decomp-0} holds.
    \end{enumerate}
\end{lemma}
\begin{proof}
    We only show \ref{par-cor-eucl-2} as the proof of \ref{par-cor-eucl-1} is similar.
    Recall that \(f_{v'} = \Eval{v'}{f} \in C^\infty(\R)\) for \(v' \in E'\).
    Note that \(P(D)f_{v'} = \sum_{n = 0}^\infty a_n f^{(n)}_{v'}\) converges absolutely in \(C(\R)\) and that \(P(D)f_{v'} = \Eval{v'}{P(D)f}\).
    In \(\mathcal{D}'^{(\omega)}(\R)\), we have 
    \[
        f_{v'} 
        = f_{v'} \ast \delta 
        = f_{v'} \ast (P(D) \psi_0) + f_{v'} \ast \psi_1 
        = (P(D) f_{v'}) \ast \psi_0+ f_{v'} \ast \psi_1
        .
    \]
    Since \(P(D)f_{v'}, f_{v'} \in C(\R)\) and \(\psi_0, \psi_1 \in C_c(\R)\), the equality \(f_{v'} = (P(D) f_{v'}) \ast \psi_0+ f_{v'} \ast \psi_1\) in fact holds pointwise.
    We have
    \[
        (P(D) f_{v'}) \ast \psi_0 
        = \Eval{v'}{P(D)f} \ast \psi_0 
        = \Eval{v'}{P(D) f \ast \psi_0}
    \]
    and
    \[
        f_{v'} \ast \psi_1 
        = \Eval{v'}{f \ast \psi_1}
        .
    \]
    Consequently, we find that, for all \(v' \in E'\),
    \[
        \Eval{v'}{f} 
        = f_{v'} 
        = (P(D) f_{v'}) \ast \psi_0+ f_{v'} \ast \psi_1 
        = \Eval{v'}{(P(D) f) \ast \psi_0 + f \ast \psi_1}
        ,
    \]
    which implies that \(f = (P(D) f) \ast \psi_0 + f \ast \psi_1\) by the Hahn-Banach theorem.
\end{proof}

We denote by \(\mathfrak{g}\) the Lie algebra of \(G\) and by \(\exp\colon \mathfrak{g} \to G\) the exponential mapping.
We identify each \(X \in \mathfrak{g}\) with its associated left-invariant vector field on \(G\), that is,
\begin{equation}
    \label{equ:associated-left-invariant-vector-field}
    Xf(x) =
\left.\frac{d}{ dt}\right|_{t =0}f(x\exp(tX))
    , \qquad f \in C^\infty(G), x \in G.
\end{equation}
Let \(\pi\) be a representation of \(G\) on a sequentially complete lcHs \(E\).
For \(X \in \mathfrak{g}\), we define the representation
\[
    \pi_X\colon (\R, +) \to \operatorname{GL}(E), \qquad \pi_X(t) = \pi(\exp(tX)).
\]
We denote the orbit of \(v \in E\) under \(\pi_X\) by \(\gamma_{X,v}\).
Note that \(\gamma_{X,v}(t) = \gamma_{v}(\exp(tX))\).
Hence, \(\gamma_{X,v} \in C(\R;E)\) (\(\gamma_{X,v} \in C^\infty(\R;E)\)) if \(v \in E^0\) (\(v \in E^\infty\)).
In accordance to \cref{Section smooth and ultradifferentiable vectors}, we set
\[
    \Pi_X(f) v 
    = \int_{\R} \gamma_{X,v}(t) f(-t) dt
    , \qquad f \in C_c(\R), v \in E^0
    .
\]
It holds that
\[
    \gamma_{X, \Pi_X(f) v} 
    = \gamma_{X,v} \ast f
    , \qquad f \in C_c(\R), v \in E^0
    .
\]
In view of \eqref{eq:infinitesimalaction} and  \eqref{orbit-prop},
\[
\pi(X^{i})v= \pi_{X}\left(\frac{d^{i}}{dt^{i}}\right)v = X^i\gamma_v(e) \qquad \mbox{and}\qquad  \gamma_{X, \pi(X^{i})v} 
    = \gamma^{(i)}_{X,v}
    , \qquad v \in E^\infty
    .
\]

Given a power series \(P(z) = \sum_{i =0}^\infty a_i z^i\), we formally write
\[
 {\pi (P(X)) v = \pi_{X}\Big(P\Big(\frac{d}{dt}\Big)\Big)
    = \sum_{i =0}^\infty a_i \pi(X^i)v}
    , \qquad v \in E^\infty
    .
\]

\begin{lemma} \label{par-cor} 
    Let \(X \in \mathfrak{g}\) and let \(\pi\) be a representation of \(G\) on a sequentially complete lcHs \(E\).
    \begin{enumerate}[i]
        \item Let \(P\) be a polynomial and \(\psi_0,\psi_1 \in C_c(\R)\) such that \(P(D) \psi_0 + \psi_1 = \delta \text{ in } \mathcal{D}'(\R)\).
            Then, for all \(v \in E^\infty\)
            \begin{equation}
                \label{decomp-1}
                v = \Pi_{X}(\psi_0){\pi(P(X))}v + \Pi_{X}(\psi_1)v.
            \end{equation}
        \item Let \(P(z) = \sum_{i =0}^\infty a_i z^i\) be an ultrapolynomial of class \((\omega)\) and \(\psi_0,\psi_1 \in C_c(\R)\) such that \(P(D) \psi_0 + \psi_1 = \delta \text{ in } \mathcal{D}'^{(\omega)}(\R)\).
            Let \(v \in E^\infty\) be such that {\(P(D) \gamma_{X,v} = \sum_{i = 0}^\infty a_i \gamma_{X,v}^{(i)} \)} converges absolutely in {\(C(\R;E)\)}.
            Then, \eqref{decomp-1} holds.
    \end{enumerate}
\end{lemma}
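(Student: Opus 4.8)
The statement is the local (one-parameter) version of the parametrix decomposition, and it is the bridge between the Euclidean result Lemma~\ref{par-cor-eucl} and the group-level Theorem~\ref{par-final}. The plan is to reduce \eqref{decomp-1} to \eqref{decomp-0} applied to the orbit $\gamma_{X,v}\in C^\infty(\R;E)$ of $v$ under the one-parameter subgroup $\pi_X$, and then recognize both sides as the values at $0\in\R$ of smooth $E$-valued functions on $\R$.

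First I would deal with part $(ii)$ (part $(i)$ being the same argument with a polynomial in place of an ultrapolynomial, so that no convergence issue arises). Fix $v\in E^\infty$ with $P(X)v=\sum_i a_i X^i v$ absolutely convergent in $E$. The key observation is the compatibility identity $\gamma_{X,X^iv}=\gamma_{X,v}^{(i)}$ recorded just before the lemma, together with local equicontinuity: applying any $q\in\csn(E)$ to the partial sums of $\sum_i a_i\gamma_{X,v}^{(i)}(t)=\sum_i a_i\gamma_{X,X^iv}(t)=\gamma_{X,\,\sum_i a_iX^iv}(t)$ and using that $\{\pi(\exp(tX))\mid |t|\le r\}$ is equicontinuous for the relevant $r$, one gets that $\sum_i a_i\gamma_{X,v}^{(i)}$ converges absolutely in $C(\R;E)$, with sum $\gamma_{X,P(X)v}$; call this function $P(D)\gamma_{X,v}$, consistently with the formal notation. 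Hence $f:=\gamma_{X,v}\in C^\infty(\R;E)$ satisfies the hypothesis of Lemma~\ref{par-cor-eucl}$(ii)$, which yields
\[
\gamma_{X,v}=(P(D)\gamma_{X,v})\ast\psi_0+\gamma_{X,v}\ast\psi_1
=\gamma_{X,P(X)v}\ast\psi_0+\gamma_{X,v}\ast\psi_1
\]
as an identity in $C(\R;E)$.

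Next I would translate the right-hand side back into the group picture. By definition $\gamma_{X,\Pi_X(\psi_0)w}=\gamma_{X,w}\ast\psi_0$ for $w\in E^0$, and $P(X)v\in E^0$ (indeed $E\supseteq E^\infty\ni P(X)v$, and one only needs $P(X)v\in E$, which is the absolute convergence hypothesis; membership in $E^0$ follows since each $X^iv\in E^\infty\subseteq E^0$ and the convergence is in $E$, using again local equicontinuity to see the orbit is continuous). Thus $\gamma_{X,P(X)v}\ast\psi_0=\gamma_{X,\Pi_X(\psi_0)P(X)v}$ and $\gamma_{X,v}\ast\psi_1=\gamma_{X,\Pi_X(\psi_1)v}$, so the displayed identity reads $\gamma_{X,v}=\gamma_{X,\Pi_X(\psi_0)P(X)v+\Pi_X(\psi_1)v}$. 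Evaluating at $t=0$ (where $\gamma_{X,w}(0)=w$) gives exactly \eqref{decomp-1}. One subtle point worth checking is that $\Pi_X(\psi_0)P(X)v$ and $\Pi_X(\psi_1)v$ both lie in $E^\infty$ so that their orbits are $C^\infty$ and the evaluation makes sense on the level stated; this follows from $\psi_0,\psi_1\in C_c(\R)$ together with the smoothing property of convolution, but since we only need the value at $0$ it in fact suffices to know the orbits are continuous.

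\textbf{Main obstacle.} The only real subtlety is the interchange of the infinite sum $P(X)$ with the orbit map and the convolution, i.e.\ justifying that $\sum_i a_i\gamma_{X,v}^{(i)}$ converges in $C(\R;E)$ and equals $\gamma_{X,P(X)v}$, and that $P(X)v\in E^0$. This is precisely where local equicontinuity of $\pi$ is used: it converts the absolute convergence of $\sum_i a_i X^iv$ in $E$ (a statement about a single point, the identity) into uniform convergence of the orbits on compact $t$-intervals. Once this is in place, everything else is a faithful transcription of Lemma~\ref{par-cor-eucl} through the functor $v\mapsto\gamma_{X,v}$ and evaluation at $0$.
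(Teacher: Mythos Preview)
Your proposal is correct and follows essentially the same route as the paper: use local equicontinuity to upgrade the absolute convergence of $\sum_i a_i X^i v$ in $E$ to absolute convergence of $\sum_i a_i \gamma_{X,v}^{(i)}$ in $C(\R;E)$ with sum $\gamma_{X,P(X)v}$, apply Lemma~\ref{par-cor-eucl} to $f=\gamma_{X,v}$, rewrite the convolutions via $\gamma_{X,w}\ast\psi=\gamma_{X,\Pi_X(\psi)w}$, and evaluate at $0$. Your additional remark that $P(X)v\in E^0$ (needed to invoke the orbit-convolution identity) is a point the paper leaves implicit, and your justification via local equicontinuity is the right one.
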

\begin{proof} We only show $(ii)$ as the proof of $(i)$ is similar.
    {By assumption,}
    \[
        P(D) \gamma_{X,v} 
        = \sum_{i = 0}^\infty a_i \gamma_{X,v}^{(i)} 
        = \sum_{i = 0}^\infty a_i \gamma_{X,{\pi(X^i)v}}
    \]
    converges absolutely in \(C(\R;E)\).
    Moreover, one also has \(P(D) \gamma_{X,v} = \gamma_{X,   {\pi(P(X))}v}\).
    Hence, \Cref{par-cor-eucl} implies that
    \[
        \gamma_{X,v} 
        = \gamma_{X,{\pi(P(X))}} \ast \psi_0 + \gamma_{X,v} \ast \psi_1 
        = \gamma_{X, \Pi_{X}(\psi_0)   {\pi(P(X))}v} + \gamma_{X, \Pi_{X}(\psi_1)v}
        .
    \]
    The result now follows by evaluating the above equality at \(0\).
\end{proof}

Let \({\bf{X}} = (X_1, \ldots, X_n)\) be a basis of \(\mathfrak{g}\) and consider the mapping (exponential coordinates of the second type)
\[
    \Phi\colon \R^n \to G, \quad \Phi(t_1, \ldots, t_n) = \exp(t_{1}X_{1}) \cdots \exp(t_{n}X_{n}).
\]
Then, there is \(r_0 >0\) such \(\Phi\colon (-r_0,r_0)^{n} \to G\) is a regular diffeomorphism onto its image.

\begin{lemma}\label{change-of-var}
    Let \({\bf{X}}\) be a basis of \(\mathfrak{g}\) and let \(\Phi,r_0\) be as above, let \(\pi\) be a representation of \(G\) on a sequentially complete lcHs \(E\), and let \(r < r_0\).
    \begin{enumerate}[i]
        \item \label{change-of-var-1}
            Let \(j \in \N\).
            For all
            \(\psi_1, \ldots, \psi_n \in C^j_c((-r,r))\) there is \(\chi \in C^j(\Phi((-r,r)^n))\) such that
            \begin{equation}
                \label{DM-trick}
                \Pi_{X_1}(\psi_1) \cdots \Pi_{X_n}(\psi_n) v= \Pi(\check{\chi})v, \qquad v \in E^0.
            \end{equation}
        \item \label{change-of-var-2}
            For all \(h'>0\) there is \(k'>0\) such that for all \(\psi_1, \ldots, \psi_n \in \mathcal{D}^{\omega,k'}((-r,r))\) there is \(\chi \in \mathcal{D}(\Phi((-r,r)^n))\) with \(\chi \circ \Phi \in \mathcal{D}^{\omega,h'}((-r,r)^n)\) such that
            \eqref{DM-trick} holds.
    \end{enumerate}
\end{lemma}
\begin{proof}
    We only show \ref{change-of-var-2} as the proof of \ref{change-of-var-1} is similar.
    Condition \eqref{M12} yields that there are \(C,k' >0\) such that
    \[
        \frac{1}{k'}\varphi^*(k't) + (\log 2)t 
        \leq \frac{1}{h'}\varphi^*(h't) + \log C
        , \qquad t \geq 0
        .
    \]
    This inequality together with the convexity of \(\varphi^*\) implies that
    \begin{equation}
        \label{mult}
        f \psi \in \mathcal{D}^{\omega, h'}((-r,r)^n), \qquad f \in \mathcal{E}^{\omega,k'}((-r,r)^n), \psi \in \mathcal{D}^{\omega,k'}((-r,r)^n).
    \end{equation}
    Let \(\psi_1, \ldots, \psi_n \in \mathcal{D}^{\omega,k'}((-r,r))\) be arbitrary.
    Then \(\psi_1 \otimes \cdots \otimes \psi_n \in \mathcal{D}^{\omega,k'}((-r,r)^n)\) because \(\varphi^*\) is convex.
    Note that there is a real analytic function \(J\colon \Phi((-r_0,r_0)^n) \to \R\) such that
    \[
        \int_{\R^n} f(t) \psi(t) dt 
        = \int_{G} f(\Phi^{-1}(x)) \psi(\Phi^{-1}(x)) J(x) dx
        , \quad f \in C(\R^n;E), \psi \in \mathcal{D}((-r_0,r_0)^n)
        .
    \]
    Set \(\chi(x) = (\psi_1 \otimes \cdots \otimes \psi_n)(-\Phi^{-1}(x))J(x)\), \(x \in G\).
    Since \(\psi_1 \otimes \cdots \otimes \psi_n \in \mathcal{D}^{\omega,k'}((-r,r)^n)\) and \(J \circ \Phi \in \mathcal{A}((-r,r)^n) \subseteq \mathcal{E}^{\omega,k'}((-r,r)^n)\), \eqref{mult} implies that \(\chi \circ \Phi \in \mathcal{D}^{\omega,h'}((-r,r)^n)\).
    For each \(v \in E^0\),
    \begin{align*}
        \Pi_{X_1}(\psi_1) \cdots \Pi_{X_n}(\psi_n) v
        & = \int_{\R^n} \gamma_v(\Phi(t)) (\psi_1 \otimes \cdots \otimes \psi_n)(-t) dt \\
        &= \int_{G} \gamma_v(x)(\psi_1 \otimes \cdots \otimes \psi_n)(-\Phi^{-1}(x)) J(x) dx \\
        &= \int_{G} \gamma_v(x) \chi(x) dx = \Pi(\check{\chi})v
        .
        \qedhere
    \end{align*}
\end{proof}

\begin{proof}[Proof of \Cref{par-final}]
    This follows from \Cref{parametrix}, and \Cref{par-cor,change-of-var} (for \ref{par-final-2} we also use Proposition \ref{switch-explicit}\ref{switch-explicit-2}).
\end{proof}

\begin{remark} 
    Cartier also employed the parametrix method to study smooth vectors in \cite{C}. 
    Our applications to be discussed in the following two sections will require the more explicit form we have treated in this subsection.
\end{remark}

\section{Main results}
\label{sect-main}
We are ready to study quasinormability and the property \((\Omega)\) for the spaces \(E^{\infty}\) and \(E^{(\omega)}\).

\subsection{Quasinormability} Given a lcHs \(F\), we denote by \(\mathcal{U}_0(F)\) the family
of all absolutely convex neighborhoods of \(0\) in \(F\) and by \(\mathcal{B}(F)\) the family  of all bounded subsets of
\(F\).
Given \(p \in \csn(F)\) we write \(V_p = \{ v \in F \mid p(v) \leq 1 \} \in \mathcal{U}_0(F)\).
The space \(F\) is said to be quasinormable \cite[p.\ 313]{M-V} if
\[
    \forall U \in \mathcal{U}_0(F) \, \exists V \in \mathcal{U}_0(F) \, \forall \varepsilon \in (0,1] \, \exists B \in \mathcal{B}(F) \colon 
    V \subseteq \varepsilon U + B
    .
\]

We are ready to prove the first main result of this article.
\begin{theorem} \label{main}
    Let \(\pi\) be a {continuous} {and} locally equicontinuous representation of \(G\) on a sequentially complete lcHs \(E\).
    \begin{enumerate}[i]
        \item \label{main-1}
            \(E^\infty\) is quasinormable if \(E\) is so.
        \item \label{main-2} 
            \(E^{(\omega)}\) is quasinormable if \(E\) is so.
    \end{enumerate}
\end{theorem}
\begin{proof}
    We only show \ref{main-2} as the proof of \ref{main-1} is similar.
    Let \(\bf{X}\) be a {left-invariant}  frame on \(G\).
 Employing Proposition \ref{main-frames}\ref{main-frames-2}, it suffices to show that
    \begin{gather*}
        \forall K \subseteq \LieGroup \text{ compact}, h>0, p \in \csn(E) \, \exists L \subseteq \LieGroup \text{ compact}, k >0, q \in \csn(E) \\
        \forall \varepsilon \in (0,1] \, \exists B \in \mathcal{B}(E^{(\omega)}) \colon
        V_{q_{{\bf{X}},L,\omega, k}} \subseteq \varepsilon V_{p_{{\bf{X}},K,\omega, h}} + B
        .
    \end{gather*}
    Let \(K \subseteq \LieGroup\) compact, \(h >0\), and \(p \in \csn(E)\) be arbitrary.
    Take \((\Molf_{\varepsilon})_{\varepsilon \in (0,1]} \subseteq \mathcal{D}^{(\omega)}(G)\) as in \Cref{approx}.
    Set \(L = KS_1\).
    Hence, there are \(C,k >0\) such that for all \(\varepsilon \in (0,1]\)
    \[
        p_{{\mathbf{X}},K,\omega,h}(v-\Pi(\Molf_{\varepsilon})(v)) 
        \leq \varepsilon C p_{{\mathbf{X}},L,\omega,k}(v)
        , \qquad v \in E^{(\omega)}
        .
    \]
    Furthermore, \Cref{smoothening}\ref{smoothening-2}
     implies that there is \(r \in \csn(E)\) such that for all \(\varepsilon \in (0,1]\)
    \[
        p_{{\bf{X}},K,\omega,h}(\Pi(\chi_\varepsilon)v) 
        \leq \norm{ \chi_\varepsilon }_{{\bf{X}},\omega,h} r(v)
        , \qquad v \in E^0 = E
        .
    \]
  
    Since \(E\) is quasinormable, there is \(s \in \csn(\Vecs)\) such that
    \begin{equation}
        \label{equ:quasinormability-vecs}
        \forall \delta > 0 \, \exists A \in \mathcal{B}(\Vecs) \colon V_s \subseteq \delta V_r + A.
    \end{equation}
    Set \(q = \max \{s,Cp\} \in \csn(E)\).
    Let \(\varepsilon \in (0,1]\) be arbitrary.
    Choose \(A \in \mathcal{B}(E)\) according to \eqref{equ:quasinormability-vecs} with \(\delta = \varepsilon / (2\norm{ \chi_{\varepsilon/2} }_{{\bf{X}},\omega,h})\).
    Set \(B = \{\Pi(\Molf_{\varepsilon/2})(u)\mid u \in A\} \).
    Note that \(B \in \mathcal{B}(E^{(\omega)})\) by \Cref{smoothening}\ref{smoothening-2}.
    Let \(v \in V_{q_{{\bf{X}},L,\omega, k}}\) be arbitrary.
    There is some \(u \in A\) such that \(r(v-u) \leq \varepsilon / (2\norm{ \chi_{\varepsilon/2} }_{{\bf{X}},\omega,h})\).
    For \(w = \Pi(\Molf_{\varepsilon/2})(u) \in B\), we obtain that
    \begin{align*}
        p_{{\bf{X}},K,\omega, h}(v -w) 
        &\leq p_{{\bf{X}},K,\omega,h}(v-\Pi(\Molf_{\varepsilon/2})(v)) + p_{{\bf{X}},K,\omega,h}(\Pi(\Molf_{\varepsilon/2})(v-u)) \\
        &\leq \frac{\varepsilon C}{2}p_{{\mathbf{X}},L,\omega,k}(v) + \norm{ \chi_{\varepsilon/2} }_{{\bf{X}},\omega,h} r(v-u) 
        \leq \varepsilon
        ,
    \end{align*}
    which shows the result.
\end{proof}

{
\begin{remark}
    \label{rmk:extension-action-quasinormability}
    One may weaken the assumptions from \Cref{main} and instead assume that \(\Rep\) is an arbitrary representation for which \(\Pi \colon C_{c}(G) \times E^{0} \to E^{0}\) extends to a bilinear map \(C_{c}(G) \times E \to E\) such that the statements from \Cref{smoothening} hold for all \(v \in E\).
 The proof works exactly in the same way under this more general assumption.
\end{remark}
}

\subsection{The condition \texorpdfstring{\((\Omega)\)}{(Ω)} } 
A Fr\'{e}chet space \(F\) is said to satisfy the condition \((\Omega)\) \cite[p.\ 367]{M-V} if
\begin{gather*}
    \forall U \in \mathcal{U}_0(F) \, \exists V \in \mathcal{U}_0(F) \, \forall \, W \in \mathcal{U}_0(F) \, \exists C, s > 0 \, \forall \varepsilon \in (0,1] \colon
    V \subseteq \varepsilon U + \frac{C}{\varepsilon^s}W
    .
\end{gather*}
\begin{remark}\label{QN-F}
In view of  \cite[Lemma 26.14]{M-V}, a Fr\'{e}chet space \(F\) is quasinormable if and only if
    \begin{gather*}
        \forall U \in \mathcal{U}_0(F) \, \exists V \in \mathcal{U}_0(F) \, \forall \, W \in \mathcal{U}_0(F) \, \forall \varepsilon \in (0,1] \, \exists C> 0 \colon 
        V \subseteq \varepsilon U + CW
        .
    \end{gather*}
    Hence, \((\Omega)\) may be considered as a quantified version of quasinormability.
\end{remark}

The second main result of this article reads as follows.
\begin{theorem} \label{main-omega}
    Let \(\pi\) be a  continuous representation of \(G\) on a Fr\'{e}chet space \(E\).
    \begin{enumerate}[i]
        \item \label{main-omega-1}
            \(E^\infty\) satisfies \((\Omega)\) if \(E\) does so.
        \item \label{main-omega-2} 
            \(E^{(\omega)}\) satisfies \((\Omega)\) if \(E\) does so.
    \end{enumerate}
\end{theorem}
Since the space $E$ in \Cref{main-omega} is Fr\'echet,  the continuous representation $\pi$ is automatically locally equicontinuous, cf.\ \Cref{rmk:continuity-implies-local-equicontinuity}.
We first prove \Cref{main-omega}\ref{main-omega-1}.
This shall be done by using a refinement of the technique employed in the proof of \Cref{main}.
We write
\[
    \norm{ f}_{j} 
    = \sup_{t \in \R^n} \max_{\abs{\alpha} \leq j} {\abs{f^{(\alpha)}(t)}}
    , \qquad f \in C^\infty(\R^n), \ j \in \N
    .
\]

\begin{proof}[Proof of \Cref{main-omega}\ref{main-omega-1}] 
    We use the same notation as in Subsection \ref{subs-approx}.
    Let \(\bf{X}\) be a smooth frame on \(G\).
    Pick \(\psi \in \mathcal{D}(B(0,1))\) such that \(\psi \geq 0\) and \(\int_{\R^n} \psi(t)dt = 1\).
    Set \(\psi_\varepsilon(t) =\varepsilon^{-n} \psi(t/\varepsilon)\) for \(\varepsilon \in (0,1]\).
    Define \(\chi_\varepsilon \in \mathcal{D}(G)\) via
    \[
        \check{\chi}_\varepsilon 
        = \frac{\psi_\varepsilon \circ \phi}{ \norm{\psi_\varepsilon \circ \phi}_{L^1(G)} }
        .
    \]
    Then, \((\chi_\varepsilon)_{\varepsilon \in (0,1]}\) satisfies the assumptions of \Cref{approx}.
    There is a positive smooth function \(J\) on \(U\) such that
    \[
        \int_{\R^n} f(t) dt 
        = \int_{G} f(\phi(x)) J(x) dx
        , \qquad f \in \mathcal{D}(\phi(U))
        .
    \]
    Hence, we obtain that for all \(\varepsilon \in (0,1]\)
    \[
        \norm{\psi_\varepsilon \circ \phi}_{L^1(G)} 
        \geq \frac{1}{\sup_{g \in \phi^{-1}(\overline{B}(0,1))} J(g)}
        .
    \]
    Proposition \ref{switch-explicit}\ref{switch-explicit-1} therefore implies that for all \(j \in \N\) there is \(C_j>0\) such that for all \(\varepsilon \in (0,1]\)
    \begin{equation}
        \label{boundes-eps}
        \norm{ \chi_\varepsilon}_{{\bf{X}},j} 
        \leq C_j \norm{ \psi_\varepsilon }_j 
        \leq \frac{C_j \norm{ \psi}_j}{\varepsilon^{j+n}}.
    \end{equation}
    We are ready to show that \(E^\infty\) satisfies \((\Omega)\).
    By Proposition \ref{main-frames}\ref{main-frames-1} and rescaling, it suffices to show that
    \begin{gather*}
        \forall K \subseteq \LieGroup \text{ compact}, j \in \N, p \in \csn(E) \, \exists  q \in \csn(E) \, \forall L \subseteq \LieGroup \text{ compact}, m \in \N, r \in \csn(E) \\
        \exists C_1, C_2, s > 0 \, \forall \varepsilon \in (0,1] \colon V_{q_{{\bf{X}},KS_1,j+1}} \subseteq \varepsilon C_1V_{p_{{\bf{X}},K,j}} + \frac{C_2}{\varepsilon^s}V_{r_{{\bf{X}},L,m}}
        .
    \end{gather*}
    Let \(K \subseteq \LieGroup\) compact, \(j \in \N\), \(p \in \csn(E)\) be arbitrary.
    \Cref{approx}\ref{approx-1} implies that there is \(C >0\) such that
    \[
        p_{{\mathbf{X}},K,j}(v - \Pi(\chi_\varepsilon)v) 
        \leq \varepsilon Cp_{{\mathbf{X}},KS_1,j+1}(v)
        , \qquad v \in E^\infty
        .
    \]
    By \Cref{smoothening}\ref{smoothening-1}, there is \(p' \in \csn(E)\) such that for all \(\varepsilon \in (0,1]\)
    \[
        p_{\mathbf{X},K,j}(\Pi(\chi_\varepsilon)v) 
        \leq \norm{ \chi_\varepsilon }_{{\bf{X}},j} p'(v)
        , \qquad v \in E^0 =E
        .
    \]
    Since \(E\) satisfies \((\Omega)\), there is \(q \in \csn(E)\), \(q \geq \max \{ p,p'\}\), such that
    \begin{equation}
        \label{om-s}
        \forall r' \in \operatorname{csn}(E) \, \exists C', s' > 0 \, \forall \delta \in (0,1] \colon V_q \subseteq \delta V_{p'} + \frac{C'}{\delta^{s'}}V_{r'}.
    \end{equation}
    Let \(L \subseteq \LieGroup\) compact, \(m \in \N\), and \(r \in \csn(E)\) be arbitrary.
\Cref{smoothening}\ref{smoothening-1} implies that there is \(r' \in \csn(E)\) such that for all \(\varepsilon \in (0,1]\)
    \[
        r_{\mathbf{X},L,m}(\Pi(\chi_\varepsilon)v) 
        \leq \norm{ \chi_\varepsilon }_{{\bf{X}},m} r'(v)
        , \qquad v \in E^0 =E
        .
    \]
    Choose \(C',s'>0\) according to  \eqref{om-s}.
    Let \(v \in V_{q_{{\bf{X}},KS_1,j+1}}\) be arbitrary.
    Using \eqref{om-s}, one obtains that for all \(\delta \in (0,1]\) there is \(w_{\delta} \in C'\delta^{-s'} V_{r'}\) such that \(v-w_\delta \in \delta V_{p'}\). We have, 
    for all \(\varepsilon,\delta \in (0,1]\),
    \begin{align*}
        p_{\mathbf{X},K,j}(v - \Pi(\chi_\varepsilon)w_{\delta}) 
        &\leq p_{\mathbf{X},K,j}(v - \Pi(\chi_\varepsilon)v) + p_{\mathbf{X},K,j}(\Pi(\chi_\varepsilon)(v-w_{\delta})) \\
        &\leq \varepsilon C+ \norm{ \chi_\varepsilon }_{{\bf{X}},j} p'(v-w_\delta) \\
        &\leq \varepsilon C+ \frac{C_j \norm{ \psi }_j }{\varepsilon^{j+n}} \delta
    \end{align*}
    and
    \[
        r_{\mathbf{X},L,m}(\Pi(\chi_\varepsilon)w_{\delta}) 
        \leq \norm{ \chi_\varepsilon }_{{\bf{X}},m} r'(w_\delta)
        \leq \frac{C'C_m\norm{ \psi }_m }{\varepsilon^{m+n} \delta^{s'}}
        .
    \]
    We set \(s = m +n + (j +n+1)s'\) and \(\delta_\varepsilon = \varepsilon^{j +n+1}\) for \(\varepsilon \in (0,1]\).
    Then, for all \(\varepsilon \in (0,1]\),
    \[
        v 
        = (v - \Pi(\chi_\varepsilon)w_{\delta_{\varepsilon}}) + \Pi(\chi_\varepsilon)w_{\delta_\varepsilon} 
        \in \varepsilon(C +C_j \norm{ \psi }_j ) V_{p_{{\bf{X}},K,j}} + \frac{C'C_m\norm{ \psi }_m}{\varepsilon^s}V_{r_{{\bf{X}},L,m}}
        .
        \qedhere
    \]
\end{proof}

Next, we show \Cref{main-omega}\ref{main-omega-2}.
Our proof is based on the parametrix method presented in Subsection \ref{subs-parametrix}.
We need two results in preparation.

\begin{lemma}\label{abstract}
    Let \(F_1 \supseteq F_2 \supseteq \cdots\) be a decreasing sequence of Fr\'{e}chet spaces with continuous inclusion mappings.
    Set \(F = \bigcap_{j \in \N} F_j\) and endow \(F\) with its natural Fr\'{e}chet space topology, namely, the initial topology with respect to the inclusion mappings \(F \to F_{j}\), \(j \in \N\).
    Suppose that
    \begin{gather*}
        \forall i \in \N, U \in \mathcal{U}_0(F_i) \, \exists j \geq i, V \in \mathcal{U}_0(F_j) \, \forall \, m \geq j, W \in \mathcal{U}_0(F_m) \\
        \exists C, s > 0 \, \forall \varepsilon \in (0,1] \colon V \subseteq \varepsilon U + \frac{C}{\varepsilon^s}W
        .
    \end{gather*}
    Then, \(F\) satisfies \((\Omega)\).
\end{lemma}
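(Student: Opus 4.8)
We plan to verify $(\Omega)$ for $F$ directly from its definition, working with the concrete neighborhood inclusions. The first preparatory step is to pin down a workable neighborhood basis of $F$: fixing for each $j$ an increasing fundamental sequence of seminorms on $F_j$ and using that the inclusions $F_{j+1}\hookrightarrow F_j$ are continuous, one checks that the restrictions to $F$ of the seminorms of all the $F_j$ together form a fundamental system of seminorms for $F$, and moreover that every continuous seminorm on $F$ is dominated by the restriction to $F$ of a single continuous seminorm on some $F_j$. Hence, when checking $(\Omega)$ for $F$, it suffices to let the three neighborhoods $U,V,W\in\mathcal{U}_0(F)$ run over the traces $U\cap F$ with $U\in\mathcal{U}_0(F_j)$, $j\in\N$; and since the inner quantifier in $(\Omega)$ allows us to pick $W$ as deep as we like, we may assume the third neighborhood is the trace of some $W\in\mathcal{U}_0(F_m)$ with $m$ arbitrarily large.

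With these reductions in place, the core of the argument proceeds as follows. Given $U\cap F$ with $U\in\mathcal{U}_0(F_i)$, the hypothesis supplies $j\ge i$ and $V\in\mathcal{U}_0(F_j)$; we take $V':=V\cap F$. Given $W\cap F$ with $W\in\mathcal{U}_0(F_m)$, $m\ge j$ large, the hypothesis yields $C,s>0$ with $V\subseteq\varepsilon U+\tfrac{C}{\varepsilon^s}W$ in $F_i$ for all $\varepsilon\in(0,1]$. Intersecting with $F$, every $x\in V'$ admits, for each $\varepsilon$, a splitting $x=\varepsilon u+\tfrac{C}{\varepsilon^s}w$ with $u\in U$, $w\in W$; since $x\in F\subseteq F_m$ and $w\in F_m$, automatically $u\in F_m$, so one even gets $V'\subseteq\varepsilon(U\cap F_m)+\tfrac{C}{\varepsilon^s}W$ inside $F_m$. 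The point where work is needed — and what I expect to be the main obstacle — is that $u$ and $w$ land in $F_m$, not in $F$: the inclusion we have is an $(\Omega)$-type inclusion in $F_m$, not in $F$, and a naive one-step argument cannot repair this, because whatever ``regular'' piece $w$ one extracts already traps the whole remainder $x-w$ in a fixed space $F_m\supsetneq F$.

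To push the decomposition into $F$, the plan is to iterate the hypothesis across a whole cofinal chain of levels $i=i_0\le i_1\le i_2\le\cdots$, with associated seminorms $v_k\in\csn(F_{i_{k+1}})$ produced by successive applications, re-decomposing the pieces obtained so far at ever deeper levels (in the spirit of the Dixmier--Malliavin nested-factorization technique exploited in Subsection \ref{subs-parametrix}), so as to build a series of ``regular'' contributions lying in $F_{m_1}\supseteq F_{m_2}\supseteq\cdots$ whose sum converges, by completeness of the $F_l$'s, in every $F_l$ and hence in $F$, while the accumulated ``small'' errors stay below the prescribed bound. The delicate part — the crux of the proof — is the simultaneous bookkeeping: one must choose the scale parameters $\varepsilon_k$ at the successive steps small enough that the remainders remain controlled by a fixed multiple of $\varepsilon$ times the seminorm $q$ attached to $V\cap F$, yet not so small that the regular pieces fail to be summable in the relevant seminorm $r$; this is exactly where the quantified $\varepsilon$-versus-$\varepsilon^{-s}$ form of the hypothesis, uniform over all deep levels $m\ge j$, is indispensable. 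Once the decomposition $x=\hat a+\hat b$ with $\hat a,\hat b\in F$, $p(\hat a)\le\varepsilon q(x)$ and $r(\hat b)\le\tfrac{C}{\varepsilon^s}q(x)$ is produced, $(\Omega)$ for $F$ follows (after a harmless rescaling of $\varepsilon$).
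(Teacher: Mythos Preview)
Your plan is essentially correct and amounts to carrying out by hand the Mittag-Leffler procedure that the paper invokes by citation only: the paper's proof is a one-line pointer to Wengenroth's abstract Mittag-Leffler theorem \cite[Theorem 3.2.8]{Wengenroth} and to \cite[Lemma 2.4]{D-K}, so your proposal is a direct unpacking of what the paper outsources rather than a genuinely different route.

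Two points on the execution. First, the analogy with the Dixmier--Malliavin technique of Subsection~\ref{subs-parametrix} is misplaced: that is a \emph{finite} factorization device (expressing a vector as a finite sum of $\Pi(\chi_\theta)$-images), whereas what you need here is the classical Mittag-Leffler telescoping construction in a projective spectrum, iterating to infinity and passing to a limit. Second, the assertion that your series converges ``in every $F_l$ and hence in $F$'' by completeness needs more than cofinality of the levels $i_k$: you must arrange that the neighbourhoods $V_k$ produced along the chain form a $0$-neighbourhood basis of $F$, since each $V_k$ could a priori be large inside $F_{i_{k+1}}$ and control only a single seminorm there. This is easily repaired by shrinking each $V_k$ after the hypothesis delivers it (the $(\Omega)$-type inclusion is preserved under shrinking the middle set), intersecting with a pre-chosen fundamental sequence of $F$-neighbourhoods. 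With that adjustment the bookkeeping you describe --- choosing the $\varepsilon_k$ as geometric functions of $\varepsilon$ so that $\sum_k \bigl(\prod_{i<k} C_i \varepsilon_i^{-s_i}\bigr)\varepsilon_k \lesssim \varepsilon$ while the finite product up to the index $l$ determined by $W$ is $\lesssim \varepsilon^{-s}$ --- goes through.
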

\begin{proof}
    This follows from the Mittag-Leffler theorem \cite[Theorem 3.2.8]{Wengenroth}, see \cite[Lemma 2.4]{D-K} for details.
\end{proof}

Our next step is to show a quantified approximation result for compactly supported ultradifferentiable functions on \(G\).
We first prove an analogous result for periodic ultradifferentiable functions on \(\R^n\).
Let \(a >0\).
We denote by \(C^\infty_a(\R^n)\) the Fr\'{e}chet space consisting of all smooth \(a\Z^n\)-periodic functions.
For \(h >0\) we write
\[
    \norm{ f}_{\omega,h} 
    = \sup_{t \in \R^n} \sup_{\alpha \in \N^n} \abs{f^{(\alpha)}(t)}\exp \left(-\frac{1}{h}\varphi^*(h\abs{\alpha})\right)
    , \qquad f \in C^\infty(\R^n)
    .
\]
\begin{lemma}\label{per}
    Let \(a >0\).
    Then,
    \begin{gather*}
        \forall h >0 \, \exists k >0 \, \forall l >0 \, \exists C,s > 0 \, \forall f \in C^\infty_a(\R^n), \norm{ f }_{\omega,k} \leq 1 \\
        \forall \varepsilon \in (0,1] \, \exists f_\varepsilon \in C^\infty_a(\R^n) \colon 
        \norm{ f- f_\varepsilon }_{\omega,h} \leq \varepsilon \quad \text{and} \quad \norm{ f_\varepsilon }_{\omega,l} \leq \frac{C}{\varepsilon^{s}}
        .
    \end{gather*}
\end{lemma}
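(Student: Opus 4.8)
The plan is to use Fourier series on the torus $\R^n/a\Z^n$. A function $f \in C^\infty_a(\R^n)$ has an expansion $f = \sum_{k \in \Z^n} c_k(f) e^{2\pi i k \cdot t /a}$, and the derivative bounds $\|f\|_{\omega,h} \leq 1$ translate, via the Paley--Wiener--Komatsu type characterization of ultradifferentiability in terms of decay of Fourier coefficients, into an estimate of the shape $|c_k(f)| \leq C_0 \exp(-\omega(\varrho|k|))$ for some $\varrho>0$ depending on $h$ (and conversely such coefficient decay gives a bound on some $\|f\|_{\omega,h'}$). Then the approximant is obtained simply by truncating the series: set $f_\varepsilon = \sum_{|k| \leq N(\varepsilon)} c_k(f) e^{2\pi i k \cdot t/a}$ for a suitable radius $N(\varepsilon)$. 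The tail $f - f_\varepsilon$ is controlled in $\|\cdot\|_{\omega,h}$ by the rapid decay of the remaining coefficients, while $f_\varepsilon$, being a trigonometric polynomial of degree $N(\varepsilon)$, satisfies $\|f_\varepsilon\|_{\omega,l} \lesssim (\text{number of terms}) \cdot \sup_k |c_k| \cdot \max_{|k|\leq N} \exp(\tfrac{1}{l}\phi^*(l|\alpha|))^{-1} \cdot (|k|)^{|\alpha|}$, i.e.\ a bound that is polynomial in $N(\varepsilon)$ with the bad factor coming only from the $\phi^*$-weight at the cutoff frequency.

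\textbf{Key steps.} First I would fix $h>0$, choose $k>0$ large enough (via \eqref{M12}) so that a function with $\|f\|_{\omega,k}\leq 1$ has Fourier coefficients bounded by $C_0 \exp(-\omega(\varrho|k|))$ with $\varrho$ generous; this is the standard estimate obtained by integrating $f^{(\alpha)}$ against the character and optimizing over $|\alpha|$, using that $\sup_\alpha r^{|\alpha|} \exp(-\tfrac1k \phi^*(k|\alpha|)) = \exp(\tfrac1k \phi(\cdot))$-type quantities reproduce $\exp(\omega(\cdot))$. Next, given $l>0$, I would estimate $\|g\|_{\omega,l}$ for a trigonometric polynomial $g$ of degree $N$ in terms of $\sup_k|c_k(g)|$, $N$, and $\exp(-\tfrac1l\phi^*(l\cdot))$: each derivative $g^{(\alpha)}$ picks up $(2\pi|k|/a)^{|\alpha|} \leq (CN)^{|\alpha|}$, there are $O(N^n)$ terms, so $|g^{(\alpha)}(t)|\exp(-\tfrac1l\phi^*(l|\alpha|)) \leq C N^n \sup|c_k| \sup_{j}(CN)^j \exp(-\tfrac1l\phi^*(lj))$, and the last supremum equals $\exp(\tfrac1l\phi(\tfrac{1}{?}))$—more precisely it is $\exp(\omega_l(CN))$ for an equivalent weight, hence bounded by $\exp(H\omega(CN))$ for some $H$ depending only on $l$ (using $(\alpha)$). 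Then I would choose $N = N(\varepsilon)$ so that the tail bound $\sum_{|k|>N} C_0 e^{-\omega(\varrho|k|)} \cdot (\text{weighted derivative factor})\leq \varepsilon$; since the tail sum of $e^{-\omega(\varrho|k|)/2}$ over $|k|>N$ is $\leq e^{-c\omega(\varrho N)}$ and the $\|\cdot\|_{\omega,h}$-weight on the tail only costs $e^{C\omega(\varrho|k|)/4}$ after shrinking $\varrho$, one gets tail $\lesssim e^{-c'\omega(\varrho N)}$, so $N(\varepsilon) \approx \varrho^{-1}\omega^{-1}(c''|\log\varepsilon|)$ works. Finally, plugging this $N(\varepsilon)$ into the polynomial-in-$N$ bound for $\|f_\varepsilon\|_{\omega,l}$ gives $\|f_\varepsilon\|_{\omega,l} \leq C e^{H\omega(CN(\varepsilon))} \leq C e^{H c'' |\log \varepsilon| \cdot (\text{const})}= C\varepsilon^{-s}$, since $\omega(CN(\varepsilon)) = \omega(C\varrho^{-1}\omega^{-1}(c''|\log\varepsilon|)) \leq A\,\omega(\omega^{-1}(c''|\log\varepsilon|)) + B = A c'' |\log\varepsilon| + B$ by the doubling condition $(\alpha)$. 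This yields $s$ and $C$ as required.

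\textbf{Main obstacle.} The delicate point is the bookkeeping of the weight-function constants: one must arrange that the loss $\varrho \mapsto \varrho/2 \mapsto \varrho/4$ when absorbing the $\|\cdot\|_{\omega,h}$-weight into the coefficient decay, together with the blow-up $\omega(CN) \leq A\omega(N)+B$ from the doubling property applied several times, still produces a genuinely \emph{polynomial} dependence $\varepsilon^{-s}$ rather than, say, $\exp(|\log\varepsilon|^{1+\eta})$. This works precisely because $N(\varepsilon)$ enters $\|f_\varepsilon\|_{\omega,l}$ only through $\omega(CN(\varepsilon))$ — the trigonometric-polynomial derivative estimate converts "degree $N$" into "weight $e^{O(\omega(N))}$", not $e^{O(N)}$ — and $\omega \circ \omega^{-1}$ is the identity, so the composition collapses to $O(|\log\varepsilon|)$. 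I would also need to be slightly careful that the Fourier-coefficient characterization is quantitative with uniform constants (no dependence on $f$ beyond $\|f\|_{\omega,k}$), which is routine but worth stating explicitly as an intermediate claim.
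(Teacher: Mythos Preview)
Your approach is correct and is essentially the same as the paper's: Fourier-series truncation at radius $N(\varepsilon)$ with $\omega(N(\varepsilon))\asymp|\log\varepsilon|$. The paper streamlines the argument by introducing the sequence-space norms $|c|_{\omega,h}=\sup_{\alpha}|c_\alpha|e^{\omega(|\alpha|)/h}$, showing once that these are mutually comparable with the $\|\cdot\|_{\omega,h}$ norms (with a shift of parameter), and then reducing to a two-line computation on coefficients---choosing $k=h/2$, cutting off at $\omega(N_\varepsilon)=h\log(1/\varepsilon)$, and reading off $s=h/l$---which avoids your explicit trigonometric-polynomial derivative bounds and the repeated use of the doubling condition.
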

\begin{proof}
    Define the Fr\'{e}chet space
    \[
        s(\Z^n) 
        = \{ c = (c_\alpha)_{\alpha \in \Z^n} \in \C^{\Z^n} \mid \sup_{\alpha \in \Z^n} \abs{c_\alpha} \abs{\alpha}^k < \infty , \, \forall k \in \N \}
        .
    \]
    Given \(f \in C^\infty_a(\R^n)\), we define its Fourier coefficients as
    \[
        \widehat{f}(\alpha) 
        = \frac{1}{a} \int_0^a f(t) e^{-\frac{2\pi i}{a} \alpha \cdot t } dt
        , \qquad \alpha \in \Z^n
        .
    \]
    Then
    \[
        \mathcal{F} \colon C^\infty_a(\R^n) \to s(\Z^n), \, f \mapsto \widehat{f} = (\widehat{f}(\alpha))_{\alpha \in \Z^n}
    \]
    is a topological isomorphism whose inverse is given by
    \begin{equation}
        \label{inverseFT}
        \mathcal{F}^{-1}(c)(t) = \sum_{\alpha \in \Z^{n}} c_\alpha e^{\frac{2\pi i}{a} \alpha \cdot t }, \qquad c = (c_\alpha)_{\alpha \in \Z^n} \in s(\Z^n),
    \end{equation}
    and the series on the right-hand side is absolutely convergent in \(C^\infty_a(\R^n)\).
    For \(h >0\), we define
    \[
        \abs{c}_{\omega,h} 
        = \sup_{\alpha \in \Z^n} \abs{c_\alpha} e^{\frac{1}{h} \omega(\abs{\alpha})}
        , \qquad c \in s(\Z^n)
        .
    \]
Using \eqref{M12}, a standard argument shows that for all \(h > 0\) there are \(C,k >0\) such that
    \[
        \abs{ \widehat{f} }_{\omega,h} 
        \leq C\norm{ f}_{\omega,k}
        , \qquad f \in C^\infty_a(\R^n)
        ,
    \]
    and that for all \(h > 0\) there are \(C,k >0\) such that
    \[
        \norm{ \mathcal{F}^{-1}(c) }_{\omega,h} 
        \leq C\abs{ c}_{\omega,k}
        , \qquad c \in s(\Z^n)
        .
    \]
    Since \(\mathcal{F}\colon C^\infty_a(\R^n) \to s(\Z^n)\) is an isomorphism whose inverse is given by \eqref{inverseFT}, it therefore suffices to show that
    \begin{gather*}
        \forall h >0 \, \exists k >0 \, \forall l >0 \, \exists C,s > 0 \, \forall c \in s(\Z^n), \abs{ c }_{\omega,k} \leq 1 \\
        \forall \varepsilon \in (0,1] \, \exists c_\varepsilon \in s(\Z^n) \colon 
        \abs{ c- c_\varepsilon }_{\omega,h} \leq \varepsilon \quad \text{and} \quad \abs{ c_\varepsilon }_{\omega,l} \leq \frac{C}{\varepsilon^{s}}
        .
    \end{gather*}
    Let \(h >0\) be arbitrary.
    Set \(k =h/2\).
    Let \(l >0\) be arbitrary.
    Let \(c \in s(\Z^n)\) with \(\abs{ c }_{\omega,k} \leq 1\) be arbitrary.
    For \(\varepsilon \in (0,1]\), we choose a number \(N_{\varepsilon}
    \in [0,\infty)\) such that \(\omega(N_{\varepsilon}
    ) = h \log(1/\varepsilon)\).
    We define \(c_\varepsilon = (c_{\varepsilon,\alpha})_{\alpha \in \Z^n} \in s(\Z^n)\) by \(c_{\varepsilon,\alpha} = c_\alpha\) if \(\abs{\alpha} \leq N_{\varepsilon}\) and \(c_{\varepsilon,\alpha} = 0\) otherwise.
    Set \(s = h/l\).
    Then, it holds that \(\abs{ c- c_\varepsilon }_{\omega,h} \leq \varepsilon\) and \(\abs{ c_\varepsilon }_{\omega,l} \leq \varepsilon^{-s}\) for all \(\varepsilon \in (0,1]\) .
\end{proof}
\begin{lemma}\label{decomp-G}
    Let \({\bf{X}}\) be a real analytic frame on \(G\).
    Let \(K\) and \(L\) be compact subsets of \(G\)
    such that \(K \subseteq \operatorname{int} L\).
    Then,
    \begin{gather*}
        \forall h >0 \, \exists k >0 \, \forall l >0 \, \exists C,s > 0 \,
        \forall f \in \mathcal{D}_K,  \norm{ f }_{\mathbf{X},\omega,k} \leq 1 \\
        \forall \varepsilon \in (0,1] \, \exists f_\varepsilon \in \mathcal{D}_L ~ \colon
        \norm{ f-f_\varepsilon }_{\mathbf{X},\omega,h} \leq \varepsilon 
        \quad \text{and} \quad 
        \norm{ f_\varepsilon }_{\mathbf{X},\omega,l} \leq \frac{C}{\varepsilon^{s}}
        .
    \end{gather*}
\end{lemma}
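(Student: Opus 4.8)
The plan is to reduce the assertion to its Euclidean, and then periodic, counterpart \emph{Lemma~\ref{per}}, by localizing $f$ with a fixed partition of unity of class $(\omega)$ and transporting the pieces to $\R^n$ through real analytic charts, keeping track of the bounded losses in the weight parameter incurred at each step.

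\emph{Localization.} Since $K$ is compact and $K\subseteq\operatorname{int}L$, I would choose real analytic charts $\varphi_i\colon U_i\to\R^n$ ($i=1,\dots,N$) with $K\subseteq\bigcup_i U_i$ and $\bigcup_i\overline{U_i}\subseteq\operatorname{int}L$, a partition of unity $\rho_i\in\mathcal D^{(\omega)}(G)$ with $\operatorname{supp}\rho_i$ a compact subset of $U_i$ and $\sum_i\rho_i\equiv 1$ on a neighbourhood of $K$ (such a partition exists by the non-quasianalyticity condition $(\beta)$), and for each $i$ a function $\sigma_i\in\mathcal D^{(\omega)}(\R^n)$ equal to $1$ on a neighbourhood of $\varphi_i(\operatorname{supp}\rho_i)$ and supported in a fixed compact set $W_i\subseteq\varphi_i(U_i)$; I would also fix a period $a>0$ so large that each $W_i$ fits into a fundamental domain of $a\Z^n$. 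For $f\in\mathcal D_K$ one then has $f=\sum_i\rho_i f$, and
\[
g_i:=\bigl(\rho_i\circ\varphi_i^{-1}\bigr)\cdot\bigl(f\circ\varphi_i^{-1}\bigr)\in\mathcal D^{(\omega)}(\R^n)
\]
is supported in $\varphi_i(\operatorname{supp}\rho_i)$, hence may be regarded as an element of $C^\infty_a(\R^n)$ after periodic extension.

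\emph{Transport estimates.} The key observation is that each of the operations (a) switching between two real analytic frames on a common domain (Proposition~\ref{switch-explicit}$(ii)$), (b) composing with a real analytic coordinate map (which, after pushing the coordinate frame forward to a real analytic frame on the chart domain, reduces to (a)), and (c) multiplying by a fixed function of class $(\omega)$ (a Leibniz expansion combined with the convexity of $\phi^*$, exactly as in the estimate \eqref{mult}), raises the weight level by only a controlled amount: for every $h'>0$ there exist $k'>0$ and $C>0$ so that the relevant $h'$-seminorm of the transformed function is at most $C$ times the corresponding $k'$-seminorm of the original one. Composing finitely many such estimates yields, first, that for every $h'>0$ there are $k'>0$, $C>0$ with $\|g_i\|_{\omega,h'}\le C\|f\|_{\mathbf X,\omega,k'}$ for all $f\in\mathcal D_K$ and all $i$; and, in the other direction, that for every $h>0$ there are $h_1>0$, $C>0$ such that any $\tilde g_i\in\mathcal D^{(\omega)}(\R^n)$ with $\operatorname{supp}\tilde g_i\subseteq W_i$ produce, upon setting $f^\sharp:=\sum_i(\tilde g_i\circ\varphi_i)$ (extended by $0$), an element $f^\sharp\in\mathcal D_L$ with $\|f^\sharp\|_{\mathbf X,\omega,h}\le C\max_i\|\tilde g_i\|_{\omega,h_1}$; the same holds with $l$ in place of $h,h'$.

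\emph{Assembly.} Given $h>0$, I would run the reverse estimate to obtain $h_1$, pass through the $\sigma_i$-multiplication estimate to reach a level $h_2$, apply Lemma~\ref{per} with parameter $h_2$ to obtain the level $k_2$, and finally trace $g_i$ back to $f$ to fix the output level $k$; given $l>0$, I would run the reverse and $\sigma_i$-multiplication estimates to reach a level $l_2$ and feed $l_2$ into the output clause of Lemma~\ref{per}. For $f\in\mathcal D_K$ with $\|f\|_{\mathbf X,\omega,k}\le 1$ and $\varepsilon\in(0,1]$, after the harmless normalization $\|g_i\|_{\omega,k_2}\le 1$ (up to a fixed constant), Lemma~\ref{per} supplies $g_{i,\varepsilon}\in C^\infty_a(\R^n)$ with $\|g_i-g_{i,\varepsilon}\|_{\omega,h_2}\le\varepsilon$ and $\|g_{i,\varepsilon}\|_{\omega,l_2}\le C\varepsilon^{-s}$; I then set $\tilde g_{i,\varepsilon}:=\sigma_i g_{i,\varepsilon}$ and $f_\varepsilon:=\sum_i(\tilde g_{i,\varepsilon}\circ\varphi_i)\in\mathcal D_L$. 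Since $\sigma_i g_i=g_i$, the $\sigma_i$-multiplication estimate gives $\|g_i-\tilde g_{i,\varepsilon}\|_{\omega,h_1}\le C\varepsilon$ and $\|\tilde g_{i,\varepsilon}\|_{\omega,l_1}\le C\varepsilon^{-s}$, and because $\rho_i f-(\tilde g_{i,\varepsilon}\circ\varphi_i)=\bigl((g_i-\tilde g_{i,\varepsilon})\circ\varphi_i\bigr)$, the reverse estimate yields $\|f-f_\varepsilon\|_{\mathbf X,\omega,h}\le C\varepsilon$ and $\|f_\varepsilon\|_{\mathbf X,\omega,l}\le C\varepsilon^{-s}$, so replacing $\varepsilon$ by $\varepsilon/C$ gives the stated inequalities. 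I expect the only genuinely delicate point to be the bookkeeping in this last step: one must verify that all weight-level losses point in the same direction, so that $k$ can be chosen in terms of $h$ alone (before $l$ is revealed), that the dependence on $l$ is absorbed entirely by the output clause of Lemma~\ref{per}, and that the exponent $s$ survives the finitely many compositions; the underlying geometry — a finite real analytic atlas over $K$ with supports confined to $\operatorname{int}L$ and an $(\omega)$-partition of unity — is standard.
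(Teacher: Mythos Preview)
Your proposal is correct and follows essentially the same route as the paper. The paper abbreviates your localization step into a single sentence (``By using local coordinates, a partition of the unity argument, and Proposition~\ref{switch-explicit}$(ii)$, we may assume that $G=\R^n$ and $\mathbf{X}=\{\partial_1,\dots,\partial_n\}$''), and then, working directly in $\R^n$, periodizes $f$ itself, applies Lemma~\ref{per}, and cuts back down with a single fixed $\chi\in\mathcal D^{(\omega)}_L$ equal to $1$ near $K$; your version unpacks this reduction chart by chart but is otherwise identical in structure and uses the same key ingredients.
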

\begin{proof}
    By using local coordinates, a partition of the unity argument, and Proposition \ref{switch-explicit}\ref{switch-explicit-2}, we may assume that \(G = \R^n\) and \({\bf{X}} = \{ \partial_1, \ldots, \partial_n\}\).
    Choose \(a > 0\) such that \(L \subseteq (-a/2,a/2)^n\).
    Let \(h >0\) be arbitrary.
    Condition \eqref{M12} yields that there are \(C',h' >0\) such that
    \[
        \frac{1}{h'}\varphi^*(h't) + (\log 2)t 
        \leq \frac{1}{h}\varphi^*(ht) + \log C'
        , \qquad t \geq 0
        .
    \]
    Pick \(k>0\) according to \Cref{per} with \(h = h'\).
    Let \(l >0\) be arbitrary.
    By \eqref{M12}, there are \(C'',l' >0\) such that
    \[
        \frac{1}{l'}\varphi^*(l't) + (\log 2)t 
        \leq \frac{1}{l}\varphi^*(lt) + \log C''
        , \qquad t \geq 0
        .
    \]
    \Cref{per} implies that there are \(C,s >0\) such that
    \begin{equation}
        \label{per-omega}
        \begin{gathered}
            \forall f \in C^\infty_a(\R^n), \norm{ f }_{\omega,k} \leq 1 \, \forall \varepsilon \in (0,1] \, \exists f_\varepsilon \in C^\infty_a(\R^n) \colon \\ 
            \norm{ f- f_\varepsilon }_{\omega,h'} \leq \varepsilon 
            \quad \text{and} \quad 
            \norm{ f_\varepsilon }_{\omega,l'} \leq \frac{C}{\varepsilon^{s}}
            .
        \end{gathered}
    \end{equation}
    Let \(f \in \mathcal{D}_K\) with \(\norm{ f }_{\omega, k} \leq 1\) be arbitrary.
    Denote by \(f_a\) the \(a\Z^n\)-periodic extension of \(f\) and note that \(\norm{ f_a }_{\omega, k} = \norm{ f }_{\omega, k} \leq 1\).
    Hence, \eqref{per-omega} yields that for all \(\varepsilon \in (0,1]\) there is \(f_{a,\varepsilon} \in C^\infty_a(\R^n)\) such that \(\norm{ f_a- f_{a,\varepsilon} }_{\omega,h'} \leq \varepsilon\) and \(\norm{ f_{a,\varepsilon} }_{\omega,l'} \leq C\varepsilon^{-s}\).
    Let \(\chi \in \mathcal{D}^{(\omega)}_L\) be such that \(\chi =1\) on a neighborhood of \(K\).
    Set \(f_{\varepsilon} = \chi f_{a,\varepsilon} \in \mathcal{D}_L\).
    Then,
    \[
        \norm{ f- f_\varepsilon }_{\omega,h} 
        = \norm{ \chi(f_a-f_{a,\varepsilon})}_{\omega,h} 
        \leq C' \norm{ \chi}_{\omega,h'}\norm{ f_a-f_{a,\varepsilon}}_{\omega,h'} 
        \leq C' \norm{ \chi}_{\omega,h'} \varepsilon
    \]
    and
    \[
        \norm{ f_\varepsilon }_{\omega,l} 
        = \norm{ \chi f_{a,\varepsilon}}_{\omega,l} 
        \leq C'' \norm{ \chi}_{\omega,l'}\norm{ f_{a,\varepsilon}}_{\omega,l'} 
        \leq \frac{CC'' \norm{ \chi}_{\omega,l'}}{\varepsilon^s}
        .
    \]
    The result now follows by rescaling.
\end{proof}
\begin{proof}[Proof of \Cref{main-omega}\ref{main-omega-2}] 
    Fix a left-invariant frame \({\bf{X}}\) on \(G\).
    By \Cref{main-frames-vec-li}\ref{main-frames-vec-li-2}, we have  \(E^{(\omega)} = \bigcap_{h >0} E^{\omega,h}_{\mathbf{X}}\) as locally convex spaces, where we endow the right-hand side with its natural Fr\'{e}chet space topology.
    By \Cref{abstract} and rescaling, it therefore suffices to show that
    \begin{gather*}
        \forall {K \subseteq G \text{ compact},} \ h>0, \ p \in \csn(E) \\
        \exists {L \subseteq G \text{ compact},}\ k >0, \ q \in \csn(E) \\
        \forall {M \subseteq G \text{ compact},}\ l >0, \ r \in \csn(E) \\ 
        \exists C_1,C_2,s > 0 \,
        \forall v \in E^{\omega,k}_{\mathbf{X}},  q_{{\mathbf{X}},{L,}\omega, k}(v) \leq 1 \, \forall \varepsilon \in (0,1] \, \exists v_\varepsilon \in E^{\omega,l}_{\mathbf{X}} \colon \\
        p_{{\mathbf{X}},{K,}\omega, h}(v-v_\varepsilon) \leq C_1\varepsilon 
        \quad \text{and} \quad  
        r_{{\mathbf{X}},{M,}\omega,l}(v_\varepsilon) \leq \frac{C_2}{\varepsilon^s}
        .
    \end{gather*}
  
{
    Let \(K \subseteq G\) compact, \(h >0\), and \(p \in \csn(E)\) be arbitrary.
    Without loss of generality, we may assume that \(K\) is a neighborhood of \(e\).
    Fix a compact subset \(L\) of \(G\) with \(K \subseteq \operatorname{int} L\).
    }
    By \Cref{decomp-G}, there is \(h' >0\) such that
    \begin{equation}
        \label{om-1}
        \begin{gathered}
            \forall l >0 \, \exists C',s' > 0\, \forall f \in \mathcal{D}_K,  \norm{ f }_{\mathbf{X},\omega,h'} \leq 1 \, \forall \varepsilon \in (0,1] \, \exists f_\varepsilon \in \mathcal{D}_L \colon \\ 
            \norm{ f-f_\varepsilon }_{\mathbf{X},\omega,h} \leq \varepsilon 
            \quad \text{and} \quad 
            \norm{ f_\varepsilon }_{\mathbf{X},\omega,l} \leq \frac{C'}{\varepsilon^{s'}}
            .
        \end{gathered}
    \end{equation}
    By \Cref{par-final}\ref{par-final-2}, there are \(k >0\), \(\chi_j \in \mathcal{D}_K\) with \(\norm{\chi_j}_{{\mathbf{X}},\omega,h'} < \infty\), and continuous linear mappings \(T_j\colon E^{\omega,k}_{\mathbf{X}} \to E\), \(j=1, \ldots, 2^n\), such that
    \begin{equation}
        \label{par-use}
        v = \sum_{j = 1}^{2^n} \Pi(\chi_j)T_j(v), \qquad v \in E^{\omega,k}_{\mathbf{X}}.
    \end{equation}
  \Cref{smoothening}\ref{smoothening-2} implies that there is \(p' \in \csn(E)\) such that
    \[
        p_{{\bf{X}}, K,\omega,h}(\Pi(\chi)w) 
        \leq \norm{ \chi }_{{\bf{X}},\omega,h} p'(w)
        , \qquad w \in E, \chi \in \mathcal{D}_L
        .
    \]
    
    Since \(E\) satisfies \((\Omega)\), there is \(q' \in \csn(E)\), \(q' \geq p'\), such that
    \begin{equation}
        \label{om-2}
        \begin{gathered}
            \forall r' \in \operatorname{csn}(E) \, \exists C'', s'' > 0 \, \forall w \in E, q'(w) \leq 1 \, \forall \delta \in (0,1] \, \exists \, w_\delta \in E\colon \\ 
            p'(w-w_\delta) \leq \delta \quad \text{and} \quad r'(w_\delta) \leq \frac{C''}{\delta^{s''}}
            .
        \end{gathered}
    \end{equation}
    
    Pick \(\tilde{q} \in \csn(E)\) and \(\tilde{L} \subseteq G\) compact such that for all \(j = 1, \ldots, 2^n\) it holds that
    \[
        q'(T_j(v)) 
        \leq \tilde{q}_{{\bf{X}},\tilde{L},\omega,k}(v)
        , \qquad v \in E^{\omega,k}_{\mathbf{X}}
        .
    \]
As \(L\) has non-empty interior, there exist \(x_{1},\dots,x_{m} \in G\) such that \(\tilde{L} \subseteq \bigcup_{i=1}^{m} x_{i}L\).
   Since the mappings  \(\pi(x_i): E \to E\), $i =1, \ldots, m$, are continuous, there is \(q \in \csn(\Vecs)\) such that   for all \(j = 1, \ldots, 2^n\)
    \[
       q'(T_j(v)) 
        \leq  \tilde{q}_{{\bf{X}},\tilde{L},\omega,k}(v) \leq q_{{\bf{X}},{L,}\omega,k}(v)
        , \qquad v \in E^{\omega,k}_{\mathbf{X}}.
    \]
    
    Let {\(M \subseteq G\) compact,} \(l>0\), and \(r \in \csn(E)\) be arbitrary.
    We may assume that \(l \leq h\).
    \Cref{smoothening}\ref{smoothening-2} implies that there is \(r' \in \csn(E)\) such that
    \[
        r_{{\bf{X}},M,\omega,l}(\Pi(\chi)w) 
        \leq \norm{ \chi }_{{\bf{X}},\omega,l} r'(w)
        , \qquad w \in E, \chi \in \mathcal{D}_L
        .
    \]
    Choose \(C',s'>0\) and  \(C'',s''>0\) according to  \eqref{om-1} and  \eqref{om-2}, respectively.
    Let \(v \in E^{\omega,k}_{\mathbf{X}}\) with  \(q_{{\mathbf{X}},{L,}\omega,k}(v) \leq 1\) be arbitrary.
    Condition \eqref{om-1} and a rescaling argument imply that there is \(C''' >0\) such that for all \(j = 1, \ldots, 2^n\) and \(\varepsilon \in(0,1]\) there is \(\chi_{j,\varepsilon} \in \mathcal{D}_L\) with  \(\norm{ \chi_j - \chi_{j,\varepsilon} }_{\mathbf{X},\omega,h} \leq \varepsilon\) and \(\norm{ \chi_{j,\varepsilon} }_{\mathbf{X},\omega,l} \leq C'''\varepsilon^{-s'}\).
    Note that \(q'(T_j(v)) \leq q_{{\bf{X}},{L,}\omega,k}(v) \leq 1\) for all \(j = 1, \ldots, 2^n\).
    Hence, by \eqref{om-2}, we obtain that for all \(j = 1, \ldots, 2^n\) and \(\delta \in(0,1]\) there is \(w_{j,\delta} \in E\) with \(p'(T_j(v) - w_{j,\delta}) \leq \delta\) and \(r'(w_{j,\delta}) \leq C''\delta^{-s''}\).
    Equation \eqref{par-use} gives that for all \(\varepsilon, \delta \in (0,1]\)
    \begin{align*}
        &p_{{\mathbf{X}},{K,}\omega,h}(v- \sum_{j = 1}^{2^n} \Pi(\chi_{j,\varepsilon})w_{j,\delta}) \\
        &\qquad= p_{{\mathbf{X}},{K,}\omega,h}(\sum_{j = 1}^{2^n} \Pi(\chi_j)T_j(v)- \Pi(\chi_{j,\varepsilon})w_{j,\delta}) \\
        &\qquad\leq \sum_{j = 1}^{2^n} p_{{\mathbf{X}},{K,}\omega,h}(\Pi(\chi_j- \chi_{j,\varepsilon})T_j(v)) + p_{{\mathbf{X}},{K,}\omega,h}(\Pi(\chi_{j,\varepsilon})(T_j(v) -w_{j,\delta})) \\
        &\qquad\leq \sum_{j = 1}^{2^n} \norm{\chi_j- \chi_{j,\varepsilon} }_{{\mathbf{X}},\omega,h} p'(T_j(v)) + \norm{ \chi_{j,\varepsilon} }_{{\mathbf{X}},\omega,h} p'(T_j(v) -w_{j,\delta}) \\
        &\qquad\leq 2^n \varepsilon + \frac{2^n C'''}{\varepsilon^{s'}} \delta
    \end{align*}
    and
    \[
        r_{{\mathbf{X}},{M,}\omega,l}(\sum_{j = 1}^{2^n} \Pi(\chi_{j,\varepsilon})w_{j,\delta}) 
        \leq \sum_{j = 1}^{2^n}  \norm{ \chi_{j,\varepsilon} }_{{\mathbf{X}},\omega,l} r'(w_{j,\delta}) 
        \leq \frac{2^n C''C'''}{\varepsilon^{s'} \delta^{s''}}
        .
    \]
    We set \(s = s' + (s'+1)s''\) and \(\delta_\varepsilon = \varepsilon^{s'+1}\) for \(\varepsilon \in (0,1]\).
    Then, for all \(\varepsilon \in (0,1]\), we have  \(\sum_{j = 1}^{2^n} \Pi(\chi_{j,\varepsilon})w_{j,\delta_\varepsilon} \in E^{\omega,l}_{\mathbf{X}}\) by \Cref{smoothening}\ref{smoothening-2}.
    Moreover,
    \[
        p_{{\mathbf{X}},{K,}\omega,h}(v- \sum_{j = 1}^{2^n} \Pi(\chi_{j,\varepsilon})w_{j,\delta_\varepsilon}) 
        \leq 2^n(1 + C''')\varepsilon
    \]
    and
    \[
        r_{{\mathbf{X}},{M,}\omega,l}\left( \sum_{j = 1}^{2^n} \Pi(\chi_{j,\varepsilon})w_{j,\delta_\varepsilon}\right) 
        \leq \frac{2^n C''C'''}{\varepsilon^{s}}
        .
    \]
\end{proof}

\begin{remark}
    \label{rmk:extension-action-property-omega}
    As for \Cref{main} (see \Cref{rmk:extension-action-quasinormability}), \Cref{main-omega} in fact holds for any representation \(\pi\) for which \(\Pi \colon C_{c}(G) \times E^{0} \to E^{0}\) extends to a bilinear map \(C_{c}(G) \times E \to E\) such that the statements in \Cref{smoothening} hold for all \(v \in E\).
  The proof works exactly in the same way under this new assumption.
\end{remark}

\begin{remark}
    We do not know whether it is possible to use the same more elementary technique as in the proof of \Cref{main-omega}\ref{main-omega-1} to show \Cref{main-omega}\ref{main-omega-2}.
    The problem is that we are unable to find a family \((\chi_\varepsilon)_{\varepsilon \in (0,1]} \subseteq \mathcal{D}^{(\omega)}(G)\) that satisfies the assumptions of \Cref{approx} and suitable polynomial type bounds in \(\varepsilon\) with respect to the scale of norms \((\norm{ \Cdot}_{{\bf{X}},\omega,h})_{h >0}\) (cf.\ \eqref{boundes-eps}).
\end{remark}

\section{Right-invariant spaces of smooth and ultradifferentiable functions}\label{sect-examplesI}
A lcHs \(E\) is said to be a \emph{right-invariant locally convex Hausdorff function space (= right-invariant lcHfs) on \(G\)} if \(E\) is non-trivial and satisfies the following four properties:
\begin{enumerate}[label=\textup{(A.\arabic*)},series=rifs]
    \item \label{cnd:A1} \(E\) is continuously embedded into \(\Lloc(G)\).
    \item \label{cnd:A2} \(R_x(E) \subseteq E\) for all \(x \in G\).
    \item \label{cnd:A3} For all \(K \subseteq G\) compact and \(p \in \csn(E)\) there is \(q \in \csn(E)\) such that
        \[
            \sup_{x\in K} p( R_x f) 
            \leq q(f)
            , \qquad \forall f \in E
            .
        \]
    \item \label{cnd:A4} \(E \ast C_c(G) \subseteq E\). 
    {\item \label{cnd:A5} The mapping \(E \times C_{c}(G) \to E, \, (f,\chi) \mapsto f \ast \chi\) is separately continuous.}
    {
        \item \label{cnd:A6} For all compact \(K \subseteq G\) and \(p \in \csn(\Vecs)\), there exists \(q \in \csn(\Vecs)\) such that
            \[
                p(f \ast \chi) 
                \leq \sup_{x \in K} \abs{\chi(x)} \, q(f)
                , \qquad \forall f \in \Vecs, \chi \in C(G) \text{ with } \operatorname{supp} \chi \subseteq K 
                .
            \]
    }
     
\end{enumerate}

Conditions \ref{cnd:A1}-\ref{cnd:A3} imply that the right-regular representation
\[
    \pi_{R}\colon G \to \operatorname{GL}(E), \, x \mapsto \pi_{R}(x) \coloneqq R_x
\]
is well-defined and locally equicontinuous.
Condition \ref{cnd:A4} means that \(E\) is a right-module over the (left-)convolution algebra \(C_c(G)\).
{
Condition \ref{cnd:A6} is equivalent to the following statement:
For all \(B \subseteq C_c(G)\) bounded, the family of mappings 
\[
\{ f \mapsto f \ast \chi \colon \Vecs \to \Vecs \mid \chi \in B\}
\] 
is equicontinuous.
This may be considered a local equicontinuity condition for the natural action of the algebra \(C_{c}(G)\) on \(\Vecs\) by convolution and ensures that \(\Vecs\) is a bornological module over \(\CCont(\LieG)\).
A Banach space that is a right-invariant lcHfs on \(G\) is simply called a \emph{right-invariant \((Bf)\)-space on \(G\)}.
Our definition of a right-invariant lcHfs is inspired by the Banach function spaces used in the coorbit theory of Feichtinger and Gröchenig \cite{F-G} as well as the translation-invariant Banach spaces of distributions from \cite{D-P-V}.

\begin{example} 
    Let \(\rho\) be a right-invariant Haar measure on \(G\).
    The Banach  spaces \(L^p(G) = L^p(G,\rho)\), \(1 \leq p \leq \infty\), are right-invariant \((Bf)\)-spaces.
    We define \(L^0(G)\) as the space of all \(f \in L^\infty(G)\) such that for all \(\varepsilon >0\) there is a compact subset \(K\) of \(G\) such that
    \[
        \operatorname*{ess\,sup}_{x\in G \setminus K}\abs{f(x)} 
        \leq \varepsilon
    \]
    and endow \(L^0(G)\) with the subspace topology induced by \(L^\infty(G)\).
    Then, \(L^0(G)\) is a right-invariant \((Bf)\)-space as well.
    Weighted variants of the spaces \(L^p(G)\), \(p \in \{0\} \cup [1,\infty]\) (cf.\ the introduction) will be discussed in the next section.
\end{example}

\begin{remark}
    If \(E\) is webbed and ultrabornological, and \ref{cnd:A1} and \ref{cnd:A2} hold, then \ref{cnd:A3} is automatically satisfied, as follows from the closed graph theorem of De Wilde \cite[Theorem 24.31]{M-V} and the Banach-Steinhaus theorem.
 If \(\Vecs\) is barrelled, \ref{cnd:A6} follows  from \ref{cnd:A5} by an application of the Banach-Steinhaus theorem.
\end{remark}

\begin{lemma}\label{conv-repr}
    Let \(E\) be a sequentially complete lcHs satisfying \ref{cnd:A1}-{\ref{cnd:A4}} and consider the right-regular representation \(\pi_{R}\) of \(G\) on \(E\).
    Then,
    \[
        \Pi_{R}(\chi) f 
        = f \ast \chi
        , \qquad f \in E^0, \chi \in C_c(G)
        .
    \]
\end{lemma}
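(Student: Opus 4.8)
\textbf{Proof plan for Lemma \ref{conv-repr}.}

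The plan is to unwind both sides of the claimed identity directly from the definitions and check they agree. First I would recall that for $f \in E^0$ the orbit is $\gamma_f \colon G \to E$, $x \mapsto \pi_R(x)f = R_x f$, and by definition of $E^0$ this map is continuous. Since $E$ is sequentially complete and $\chi \in C_c(G)$, the vector-valued integral defining $\Pi_R(\chi)f = \int_G \gamma_f(x)\,\Molf(x^{-1})\,dx = \int_G R_x f\,\Molf(x^{-1})\,dx$ exists in $E$ (as noted in Section \ref{Section smooth and ultradifferentiable vectors} for $C_c(G;E)$-valued integrands, the integrand here being $x \mapsto R_xf\,\chi(x^{-1})$, which is continuous with compact support). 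On the other hand, from $(A.4)$ we know $f \ast \chi \in E$, and Section \ref{Section smooth and ultradifferentiable vectors} records the scalar-function identity $f \ast \Molf = \int_G R_y f\,\Molf(y^{-1})\,dy$ for $f \in L^1_{\operatorname{loc}}(G)$. So heuristically both sides equal the same integral; the content of the lemma is that the $E$-valued integral and the $L^1_{\operatorname{loc}}$-valued convolution coincide under the continuous embedding $(A.1)$.

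The key step is therefore to exploit the continuous inclusion $\iota \colon E \hookrightarrow L^1_{\operatorname{loc}}(G)$ from $(A.1)$ together with the fact that continuous linear maps commute with weak (hence Pettis-type) vector integrals. Concretely I would argue: for every $v' \in E'$, evaluate $\langle v', \Pi_R(\chi)f\rangle = \int_G \langle v', R_xf\rangle\,\Molf(x^{-1})\,dx$, which holds by the very definition of the integral being taken in the weak sense. Using a suitable separating family of functionals — either the elements of $E'$ directly, or, more efficiently, the functionals $g \mapsto \int_G g(y)\psi(y)\,dy$ for $\psi \in C_c(G)$, which factor through $\iota$ and separate points of $L^1_{\operatorname{loc}}(G)$ and hence of $E$ — I would show $\Pi_R(\chi)f$ and $f \ast \chi$ have the same image in $L^1_{\operatorname{loc}}(G)$. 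Pairing $\Pi_R(\chi)f$ against such a $\psi$ gives $\int_G\!\int_G \gamma_f(x)(y)\,\Molf(x^{-1})\,\psi(y)\,dx\,dy$; by Fubini (the double integrand is jointly continuous with compact support in $x$ and locally integrable in $y$, dominated since $\iota$ is continuous so $\|R_xf\|_{L^1(K)}$ is bounded uniformly for $x$ in a compact set by $(A.3)$-type control) this rearranges to $\int_G \big(\int_G f(xy)\,\Molf(x^{-1})\,dx\big)\psi(y)\,dy = \int_G (f\ast\chi)(y)\,\psi(y)\,dy$. Since $\iota$ is injective and these functionals separate points, $\Pi_R(\chi)f = f\ast\chi$ in $E$.

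The main obstacle I anticipate is purely the bookkeeping of the Fubini/measurability argument: one must be careful that the $E$-valued integral $\int_G \gamma_f(x)\,\Molf(x^{-1})\,dx$, defined weakly, really does push forward under $\iota$ to the scalar double integral, and that the interchange of the $x$-integration with the $L^1_{\operatorname{loc}}$-pairing against $\psi$ is legitimate — this is where $(A.1)$ (continuity of $\iota$) and $(A.3)$ (local equicontinuity of $\pi_R$, giving a uniform bound $\sup_{x \in \operatorname{supp}\check\chi}\|R_xf\|_{L^1(K)} < \infty$) are used to produce an integrable majorant. Everything else is a routine substitution $y \mapsto xy$ (using left-invariance of Haar measure) matching the two expressions for convolution already recorded in the text. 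No deeper ingredient is needed; the lemma is essentially a compatibility statement between the abstract operator $\Pi_R$ and the concrete convolution, and the proof is short once the integral-interchange is set up.
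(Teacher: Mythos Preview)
Your approach is essentially the paper's own: both test the identity against $\psi \in C_c(G)$, viewed as a continuous functional on $E$ via the embedding $(A.1)$, then use the weak-integral definition $\langle \Pi_R(\chi)f,\psi\rangle = \int_G \langle R_xf,\psi\rangle\,\chi(x^{-1})\,dx$ and a Fubini swap to reach $\int_G (f\ast\chi)(y)\,\psi(y)\,dy$. One small slip: the lemma only assumes $(A.1)$--$(A.3)$, so you should not invoke $(A.4)$ to assert $f\ast\chi\in E$ a priori---that membership is in fact a \emph{consequence} of the identity (since $\Pi_R(\chi)f\in E$), and your argument never actually uses it; also note $(R_xf)(y)=f(yx)$, not $f(xy)$.
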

\begin{proof}
    
    For the sake of clarity, we will momentarily explicitly denote the continuous embedding \(\Vecs \subseteq \Lloc(\LieG)\) from \ref{cnd:A1} as \(\iota:\Vecs \to \Lloc(\LieG)\).
  By definition, \(\iota (\RMul{x}f) = \RMul{x}\iota (f) \) and \(\iota (f \ast \chi )= \iota(f) \ast \chi \), for all \(x \in \LieG\), \(f \in \Vecs\), and \(\chi \in C_{c}(\LieG)\).
    Now, take arbitrary \(f \in E^{0}\) and \(\chi \in C_{c}(\LieG)\).
    We have
    \[
        \iota(\Pi_{R}(\chi)f)
        = \int_{\LieG} \iota(\chi(y^{-1}) \RMul{y}f) \dl{y}
        = \int_{\LieG} \chi(y^{-1}) \RMul{y}\iota(f) \dl{y}
        = \iota(f) \ast \chi
        = \iota(f \ast \chi)
        .
    \]

\end{proof}

\begin{remark} 
    Let \(E\) be a sequentially complete lcHs satisfying \ref{cnd:A1}-{\ref{cnd:A4}}.
    Suppose that the right-regular representation of \(G\) on \(E\) is continuous (this particularly holds true if \(C_c(G)\), or more generally \(\mathcal{D}^{(\sigma)}(G)\) for some weight function \(\sigma\), is continuously and densely embedded in \(E\)).
   \Cref{smoothening}\ref{smoothening-1} and \Cref{conv-repr} yield that \(E\) satisfies {\ref{cnd:A6} -- hence also \ref{cnd:A5} --} and therefore is a right-invariant lcHfs on \(G\).
    However, there are right-invariant lcHfs \(E\) such that the right-regular representation of \(G\) on \(E\) is not continuous, consider for example
    \(E =L^\infty(G)\).
\end{remark}

Let \(E\) be a sequentially complete right-invariant lcHfs on \(G\).
Our goal is to  determine the spaces \(E^\infty\) and \(E^{(\omega)}\) for the right-regular representation of \(G\) on \(E\).
Let \(\mathbf{X}\) be a left-invariant frame on \(G\), let \(f \in C^\infty(G)\), let \(j \in \N\) and \(\alpha \in \{1,\ldots,n\}^j\), \(j \in \N\).
Recall \(\pi_{R}(X^{\alpha})\) is the operator acting on \(E^{{j}}\) via \eqref{eq:infinitesimalaction}, while \(X^\alpha\) acts on elements of \(C^{{j}}(G)\). 

\begin{example}
\label{exa:regular-vectors-right-regular-representation-continuous-compact}

The space $C(G)$ is a right-invariant lcHfs. Using \eqref{equ:associated-left-invariant-vector-field}, one may readily verify that, for $j \in \N$, $C^j(G) = (C(G))^j$ and that the operators \(\RRep(\VectorField^{\alpha})\) and \(\VectorField^{\alpha}\) coincide, where $\alpha \in \{1,\dots, \Dim\}^j$.
\end{example}

\begin{lemma}
    \label{thm:derivative-convolution-function-vector} 
    Let \(\Frame\) be a left-invariant frame on \(\LieG\) and let \(\Vecs\) be a right-invariant lcHfs on $G$.
    Then,
    \[
        \RRep(\VectorField^\alpha) (f \ast \chi)
        = f \ast (\VectorField^\alpha \chi)
        , \qquad j \in \N, \alpha \in \{1,\dots, \Dim\}^j, f \in \Vecs, \chi \in \CSmooth[j](\LieG)
        .
    \]
\end{lemma}
\begin{proof}
    Suppose \(X \in \LieA\) and \(\chi \in \CSmooth[1](\LieG)\).
    Note that \(\RMul{x}(f \ast \chi) = f \ast (\RMul{x}\chi)\), \(x \in \LieG\), means that \(\Orbit{f \ast \chi}(x) = f \ast(\Orbit{\chi}(x))\), where the second orbit is to be interpreted using the right regular representation on {\(C(\LieG)\)}.
    \Cref{exa:regular-vectors-right-regular-representation-continuous-compact}, \eqref{orbit-prop}, and \eqref{equ:associated-left-invariant-vector-field} then yield
    \[
        \Orbit{\VectorField\chi}(x)
        = \Orbit{\RRep(\VectorField)\chi}(x)
        = \VectorField\Orbit{\chi}(x)
        =  \left. \frac{d}{ dt} \right|_{t =0}\Orbit{\chi}(x\exp t\VectorField)
        , \qquad x \in \LieG
        .
    \]
    Therefore, using \eqref{equ:associated-left-invariant-vector-field} and \ref{cnd:A5}, we find
    \begin{alignat*}{2}
        \VectorField\Orbit{f \ast \chi}(x)
        &= \left. \frac{d}{ dt}  \right|_{t =0}\Orbit{f \ast \chi}(x\exp t\VectorField) 
        =\left.\frac{d}{ dt}   \right|_{t =0}({f \ast (\Orbit{\chi}(x\exp t\VectorField))})
        \\
        & = f \ast \left(\left. \frac{d}{ dt}  \right|_{t =0}\Orbit{\chi}(x\exp t\VectorField)\right) 
       = f \ast \left(\Orbit{\VectorField\chi}(x)\right) 
        = \Orbit{f \ast \left(\VectorField\chi\right)}(x)
        ,
    \end{alignat*}
    for all \(x \in \LieG\).
    By induction, we get
    \[
        \VectorField^{\alpha}\Orbit{f \ast \chi}
        = \Orbit{f \ast \left(\VectorField^{\alpha}\chi\right)}
        ,
    \]
    for all \(j \in \N\), \(\alpha \in \{1,\dots, \Dim\}^j\), \(f \in \Vecs\), and \(\chi \in \CSmooth[j](\LieG)\).
    The statement then follows by evaluating at \(\Origin\).
\end{proof}}

The previous result particularly implies
\begin{equation}
    \label{equ:smoothening-convolution-rifs}
    \Vecs \ast \CSmooth[j](\LieG) \subseteq \SmoothV[j]
    , \qquad j \in \N \cup \{\infty\}
    .
\end{equation}
We can make this lifting of regularity more precise.

\begin{lemma}
    \label{thm:boundedness-convolution}
   Let \(\Frame\) be a left-invariant frame on $G$ and  let \(\Vecs\) be a right-invariant lcHfs on $G$. Let \(K,L \subseteq \LieG\) be compact and \(p \in \csn(\Vecs)\).
      \begin{enumerate}[i]
        \item 
            There is \(q \in \csn(\Vecs)\) such that for all \(j \in \N\) 
            \[
                p_{{\bf{X}},K,j}(f\ast \chi) 
                \leq \norm{ \chi }_{{\bf{X}},j} q(f) 
                , \qquad \forall f \in E, \chi \in C^j_c(G) \text{ with } \operatorname{supp} \chi \subseteq L
                .
            \]
        \item 
            There is \(q \in \csn(\Vecs)\) such that for all \(h >0\) 
            \[
                p_{{\bf{X}},K,\omega,h}(f\ast \chi) 
                \leq  \norm{ \chi }_{{\bf{X}},\omega,h} q(v)
                , \qquad \forall f \in E, \chi \in \mathcal{D}_L
                .
            \]    
    \end{enumerate}
\end{lemma}

\begin{proof}
    We only show (i) as (ii) is similar. 
    Take \(q\) as in \ref{cnd:A3} and \(r\) as in \ref{cnd:A6} for \(p=q\) and \(K=L\).
    By \Cref{thm:derivative-convolution-function-vector} and \eqref{orbit-prop}, we have for all \(f \in \Vecs\) and \(\chi \in \CSmooth[j](\LieG)\), $j \in \N$, with \(\Supp \chi \subseteq L\)
    \begin{align*}
        p_{\Frame,K,j}(f \ast \chi)
        &= \max_{i \leq j} \max_{\alpha \in \{1,\dots,\Dim\}^{i}} \sup_{x \in K} p\left(\VectorField^{\alpha}\Orbit{f \ast \chi}(x)\right) \\
        &\leq \max_{i \leq j} \max_{\alpha \in \{1,\dots,\Dim\}^{i}} q\left(\RRep(\VectorField^{\alpha})(f \ast \chi)\right) \\
        &= \max_{i \leq j} \max_{\alpha \in \{1,\dots,\Dim\}^{i}} q\left(f \ast (\VectorField^{\alpha}\chi)\right) \\
        &\leq \max_{i \leq j} \max_{\alpha \in \{1,\dots,\Dim\}^{i}} \sup_{x \in L} \abs{\VectorField^{\alpha}\chi(x)} \, r(f) \\
        &= \norm*{\chi}_{\Frame,j} \, r(f) 
    \end{align*}
\end{proof}
\begin{remark}
    \label{rmk:extension-action-rifs}
    Suppose \(\Vecs\) is a sequentially complete lcHfs on $G$.
    Using \Cref{conv-repr}, \(\Pi\) can be extended to act on all \(f \in \Vecs\) by means of the map \((\chi,f) \mapsto f \ast \chi\).
    \Cref{thm:boundedness-convolution} guarantees that this extension satisfies the statements from \Cref{smoothening} for all elements in \(\Vecs\).
\end{remark}

The next lemma shows that  \(\pi_{R}({X}^{\alpha}) f\) coincides with \(X^{\alpha} f\) for \(f \in E^{\infty}\cap C^{\infty}(G)\).
In fact, \Cref{description-lemma} below tells us that \(E^{\infty}\subseteq C^{\infty}(G)\), so that the operator \( {\pi_{R}(X^{\alpha})} \colon E^{\infty}\to E^{\infty}\) just becomes the classical derivative {with respect to the left-invariant differential operator \(X^{\alpha}\)}{, cf.\ \Cref{exa:regular-vectors-right-regular-representation-continuous-compact}}.

\begin{lemma}\label{smoothl1loc} 
    Let \(\mathbf{X}\) be a left-invariant frame on \(G\).
    Consider the right-regular representation $\pi_{R}$ of \(G\) on \(\Lloc(G)\).
    Then, \(C^\infty(G) \subseteq \Lloc(G)^\infty\).
    Moreover, one has {\(\pi_{R}(X^\alpha) f =X^\alpha f\) for all \(f \in C^\infty(G)\) and \(\alpha \in \{1,\ldots,n\}^j, j \in \N\).}
\end{lemma}
\begin{proof}
    The dual of \(\Lloc(G)\) may be identified with the space \(L^\infty_c(G)\) consisting of all elements in \(L^\infty(G)\) whose essential support is compact.
    Since \(\Lloc(G)\) is sequentially complete, \Cref{weaksmooth} implies that \((\Lloc(G))^\infty\) consists precisely of all those \(f \in \Lloc(G)\) such that \(f \ast \check{\chi} \in C^\infty(G)\) for all \(\chi \in L^\infty_c(G)\).
    Consequently, \(C^\infty(G) \subseteq \Lloc(G)^\infty\).
    Let \(f \in C^\infty(G)\) be arbitrary.
    Formula \eqref{equ:associated-left-invariant-vector-field} implies that \(X_j \gamma_f = \gamma_{{X_{j}} f}\) for all \(j = 1, \ldots,n\).
    Hence, by using induction, we find that \(X^\alpha \gamma_f = \gamma_{{X^{\alpha}} f}\) for all \(\alpha \in \{1,\ldots,n\}^j\), \(j \in \N\).
  Evaluating this equality at \(e\), we obtain that {\(\pi_{R}(X^{\alpha})f = X^{\alpha}f\)} for all \(\alpha \in \{1,\ldots,n\}^j\), \(j \in \N\).
\end{proof}

\begin{remark}
 Lemma \ref{description-lemma} below implies that we actually have \(C^\infty(G) = \Lloc(G)^\infty\). 
\end{remark}

In view of \Cref{conv-repr,smoothl1loc}, \Cref{par-final}\ref{par-final-1} yields the following result.

\begin{lemma}\label{par-smooth} 
    Let \({\bf{X}} = (X_1, \ldots, X_n)\) be a left-invariant frame on \(G\).
    For all \(j \in \N\) there are a polynomial \(P\) and \(\chi_\theta \in C^j_c(G)\), \(\theta = (\theta_1, \ldots, \theta_n) \in \{0,1\}^n\), such that for all \(f \in C^\infty(G)\)
    \[
        f
        = \sum_{\theta \in \{0,1\}^n} (P_{\theta_n}(X_n) \cdots P_{\theta_1}(X_1)f ) \ast \chi_\theta
        ,
    \]
    where \(P_0 = P\) and \(P_1 =1\).
\end{lemma}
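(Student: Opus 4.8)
The plan is to deduce the lemma from Theorem \ref{par-final}$(i)$ by specializing the abstract set-up to the right-regular representation $\pi_{R}$ of $G$ on $E = L^1_{\operatorname{loc}}(G)$, and then rewriting the resulting identity in concrete terms using Lemmas \ref{conv-repr} and \ref{smoothl1loc}.

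First I would verify that $E = L^1_{\operatorname{loc}}(G)$ with $\pi_{R}$ satisfies the standing hypotheses of Subsection \ref{subs-parametrix}: it is a Fr\'echet space, hence sequentially complete, and it satisfies conditions $(A.1)$--$(A.3)$. Here $(A.1)$ and $(A.2)$ are immediate, while $(A.3)$ follows from the change of variables $z = yx$ in $\int_{L}|f(yx)|\,dy$, which only produces a factor coming from the modular function that is bounded on compact sets; consequently $\pi_{R}$ is locally equicontinuous. Applying Theorem \ref{par-final}$(i)$ with $U = G$ then gives, for each $j \in \N$, a polynomial $P$ and functions $\chi_{\theta} \in C^j_c(G)$, $\theta \in \{0,1\}^n$, such that
\[
v = \sum_{\theta \in \{0,1\}^n} \Pi(\chi_{\theta})\bigl(P_{\theta_n}(X_n)\cdots P_{\theta_1}(X_1)v\bigr), \qquad v \in E^{\infty},
\]
with $P_0 = P$, $P_1 = 1$, and where $\Pi = \Pi_{R}$ is attached to $\pi_{R}$.

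Next I would restrict this identity to $v = f \in C^\infty(G)$, which is legitimate because $C^\infty(G) \subseteq E^{\infty}$ by Lemma \ref{smoothl1loc}. That same lemma identifies the abstract operators $X_i$ on $C^\infty(G) \subseteq E^\infty$ with the classical left-invariant vector fields; since each $P_{\theta_i}$ is a polynomial, $P_{\theta_i}(X_i)$ acts on $C^\infty(G)$ as the corresponding classical differential operator, and iterating shows that $P_{\theta_n}(X_n)\cdots P_{\theta_1}(X_1)f$ is the classical composition applied to $f$ and again belongs to $C^\infty(G) \subseteq E^{\infty} \subseteq E^{0}$. Finally, since $\chi_{\theta} \in C^j_c(G) \subseteq C_c(G)$, Lemma \ref{conv-repr} replaces $\Pi_{R}(\chi_{\theta})g$ by $g \ast \chi_{\theta}$ for every $g \in E^{0}$. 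Substituting, we arrive at
\[
f = \sum_{\theta \in \{0,1\}^n} \bigl(P_{\theta_n}(X_n)\cdots P_{\theta_1}(X_1)f\bigr) \ast \chi_{\theta}, \qquad f \in C^\infty(G),
\]
as an equality in $L^1_{\operatorname{loc}}(G)$, hence as a pointwise identity since both sides are (smooth) continuous functions by the convolution properties recalled in Section \ref{Section smooth and ultradifferentiable vectors}.

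I do not expect a genuine obstacle: the parametrix content is entirely supplied by Theorem \ref{par-final}$(i)$, and the remaining work is the purely formal translation between the abstract and concrete pictures. The only points deserving a line of care are the verification of $(A.3)$ for $L^1_{\operatorname{loc}}(G)$ and the observation that, on $C^\infty(G)$, the abstract operator word $P_{\theta_n}(X_n)\cdots P_{\theta_1}(X_1)$ literally equals the iterated left-invariant differential operator --- both of which are routine once Lemmas \ref{conv-repr} and \ref{smoothl1loc} are available.
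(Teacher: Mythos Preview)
Your proposal is correct and follows exactly the approach the paper takes: the paper's proof is the one-line remark ``In view of Lemmas \ref{conv-repr} and \ref{smoothl1loc}, Theorem \ref{par-final}$(i)$ yields the following result,'' and your argument spells out precisely the translation steps left implicit there.
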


Let \(E\) be a sequentially complete right-invariant lcHfs on \(G\) and let \({\bf{X}}\) be a left-invariant frame on \(G\).
We denote by \(\mathcal{D}_{E, {\mathbf{X}}}(G)\) the space consisting of all \(f \in C^\infty(G)\) such that \(X^\alpha f \in E\) for all \(\alpha \in \{1,\ldots,n\}^j\), \(j \in \N\).
For \(j \in \N\) and \(p \in \csn(E)\), we set
\[
    p_{\mathbf{X},j}(f) 
    = \max_{i \leq j} \max_{\alpha \in \{1, \ldots, n\}^i} p(X^\alpha f) 
    , \qquad f \in \mathcal{D}_{E,\mathbf{X}}(G)
    .
\]
We endow \(\mathcal{D}_{E, {\mathbf{X}}}(G)\) with the locally convex topology generated by the system of seminorms \(\{ p_{\mathbf{X},j} \mid j \in \N, p \in \csn(E)\}\).
Likewise,
we write \(\mathcal{D}^{(\omega)}_{E, {\mathbf{X}}}(G)\) for the space consisting of all \(f \in \mathcal{D}_{E, {\mathbf{X}}}(G)\) such that, for all \(h >0\) and \(p \in \csn(E)\),
\[
    p_{\mathbf{X},\omega,h}(f) 
    = \sup_{j \in \N} \max_{\alpha \in \{1, \ldots, n\}^j} p(X^\alpha f)\exp \left(-\frac{1}{h}\varphi^*(hj)\right) 
    < \infty
\]
and endow \(\mathcal{D}^{(\omega)}_{E, {\mathbf{X}}}(G)\) with the locally convex topology generated by the system of seminorms \(\{ p_{\mathbf{X},\omega,h} \mid h>0, p \in \csn(E)\}\).

\begin{remark}
    \label{rmk:non-trivial}
    Proposition \ref{main-frames} and {\Cref{thm:boundedness-convolution}} imply that \(f \ast \chi \in \mathcal{D}_{E, {\mathbf{X}}}(G)\) (\(\mathcal{D}^{(\omega)}_{E, {\mathbf{X}}}(G)\)) for all \(f \in E\) and \(\chi \in \mathcal{D}(G)\) (\(\chi \in \mathcal{D}^{(\omega)}(G)\)).
    In particular, the spaces \(\mathcal{D}_{E, {\mathbf{X}}}(G)\) and \(\mathcal{D}^{(\omega)}_{E, {\mathbf{X}}}(G)\) are non-trivial (recall that \(E\) is assumed to be non-trivial).
\end{remark}

\begin{lemma}\label{description-lemma}
    Let \(\mathbf{X}\) be a left-invariant frame on \(G\) and let \(E\) be a sequentially complete right-invariant lcHfs on \(G\).
    Then, \(E^\infty = \mathcal{D}_{E, {\mathbf{X}}}(G)\) as sets.
\end{lemma}
\begin{proof}
    We first prove that \(E^\infty \subseteq \mathcal{D}_{E, {\mathbf{X}}}(G)\).
    Let \(f \in E^\infty\) be arbitrary.
    By \Cref{par-final}\ref{par-final-1} and \Cref{conv-repr} it holds that for all \(j \in \N\) there are \(\chi_i \in C^{j}_c(G)\) and \(f_i \in E^\infty \subseteq E^0\), \(i = 1, \ldots, 2^n\), such that
    \[
        f 
        = \sum_{i = 1}^{2^n} \Pi_{R}(\chi_i) f_i 
        = \sum_{i = 1}^{2^n} f_i \ast \chi_i \in C^{j}(G)
        .
    \]
    In particular, \(f \in C^{\infty}(\LieG)\).
    Moreover, for all \(\alpha \in \{1,\ldots,n\}^j\), we have 
    \[
        X^\alpha f 
        = \sum_{i = i}^{2^n} f_i \ast X^\alpha \chi_i 
        \in E
        ,
    \]
    where the last inclusion follows from the fact that \(E \ast C_c(G) \subseteq E\) (condition \ref{cnd:A4}).
    Hence, \(f \in \mathcal{D}_{E, {\mathbf{X}}}(G)\).
    Next we show that \(\mathcal{D}_{E, {\mathbf{X}}}(G) \subseteq E^\infty\).
        Let \(f \in \mathcal{D}_{\Vecs, \Frame}(\LieG)\) and \(j \in \N\) be arbitrary.
    By \cref{par-smooth} there are \(\chi_i \in \CSmooth[j](\LieG)\) and \(f_i \in \Vecs\), \(i = 1, \ldots, 2^{\Dim}\), such that
    \[
        f 
        = \sum_{i = 1}^{2^{\Dim}} f_i \ast \chi_i
        \in \SmoothV[j]
        .
    \]
    Here, the last inclusion follows from \eqref{equ:smoothening-convolution-rifs}.
    Hence, \(f \in E^\infty\). 
\end{proof}

\begin{remark}\label{non-trivial}
    \Cref{description-lemma} for the particular case \(E = L^p(G)\), \(1 \leq p < \infty\), was shown by Poulsen \cite{Poulsen} via  different methods.
\end{remark}

 \Cref{description-lemma} and \Cref{main-frames-vec-li} yield the following result.
\begin{theorem}\label{description}
    Let \(E\) be a sequentially complete right-invariant lcHfs on \(G\) and let \(\mathbf{X}\) be a left-invariant frame on \(G\).
    \begin{enumerate}[i]
        \item \label{description-1}
            \(E^\infty = \mathcal{D}_{E, {\mathbf{X}}}(G)\) as locally convex spaces.
        \item \label{description-2}
            \(E^{(\omega)} = \mathcal{D}^{(\omega)}_{E, {\mathbf{X}}}(G)\) as locally convex spaces.
    \end{enumerate}
\end{theorem}

 Let \(E\) be a sequentially complete right-invariant lcHfs on \(G\).
We write \(\mathcal{D}_{E}(G) = E^\infty\) and \(\mathcal{D}^{(\omega)}_{E}(G) = E^{(\omega)}\). We can thus rephrase \Cref{description} as  \(\mathcal{D}_{E}(G)= \mathcal{D}_{E, {\mathbf{X}}}(G)\) and \(\mathcal{D}^{(\omega)}_{E}(G)= \mathcal{D}^{(\omega)}_{E, {\mathbf{X}}}(G)\) for any left-invariant frame \(\mathbf{X}\) on \(G\).
\Cref{main,main-omega}, and Remarks \ref{rmk:extension-action-quasinormability}, \ref{rmk:extension-action-property-omega}, and \ref{rmk:extension-action-rifs} imply the following two results.

\begin{theorem} \label{main-example}
    Let \(E\) be a sequentially complete right-invariant lcHfs on \(G\).
    \begin{enumerate}[i]
        \item \(\mathcal{D}_{E}(G)\) is quasinormable if \(E\) is so.
        \item \(\mathcal{D}^{(\omega)}_{E}(G)\) is quasinormable if \(E\) is so.
    \end{enumerate}
\end{theorem}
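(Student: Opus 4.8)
The plan is to derive this immediately from Theorem~\ref{main} by identifying $\mathcal{D}_{E}(G)$ and $\mathcal{D}^{(\omega)}_{E}(G)$ with spaces of smooth, respectively ultradifferentiable, vectors associated with a Lie group representation. Concretely, I would take $\pi = \pi_{R}$, the right-regular representation of $G$ on $E$. Since $E$ is a sequentially complete right-invariant lcHfs, conditions $(A.1)$-$(A.3)$ ensure that $\pi_{R}$ is a well-defined, locally equicontinuous representation of $G$ on the sequentially complete lcHs $E$, so the hypotheses of Theorem~\ref{main} are satisfied.

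First I would recall that, by definition and by Theorem~\ref{description}, one has $\mathcal{D}_{E}(G) = E^{\infty}$ and $\mathcal{D}^{(\omega)}_{E}(G) = E^{(\omega)}$ as locally convex spaces, where $E^{\infty}$ and $E^{(\omega)}$ are formed with respect to $\pi_{R}$. Then part $(i)$ follows by applying Theorem~\ref{main}$(i)$ to $\pi_{R}$: quasinormability of $E$ gives quasinormability of $E^{\infty} = \mathcal{D}_{E}(G)$. Likewise, part $(ii)$ follows from Theorem~\ref{main}$(ii)$, which yields that $E^{(\omega)} = \mathcal{D}^{(\omega)}_{E}(G)$ is quasinormable whenever $E$ is.

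There is no genuine obstacle at this stage: all the analytic work is packaged inside Theorem~\ref{main}, that is, inside the approximate-identity argument of Proposition~\ref{approx} together with the smoothening estimates of Lemma~\ref{smoothening}. The only point that requires some care is that quasinormability is a property of the locally convex topology, so one must invoke the full (topological, not merely set-theoretic) identification provided by Theorem~\ref{description}; once that is in hand, the proof reduces to a one-line application of Theorem~\ref{main}.
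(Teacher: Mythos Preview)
Your proposal is correct and matches the paper's own argument exactly: the paper simply records that $\mathcal{D}_{E}(G) = E^{\infty}$ and $\mathcal{D}^{(\omega)}_{E}(G) = E^{(\omega)}$ for the right-regular representation and then states that Theorem~\ref{main} implies the result. The only minor remark is that the equality $\mathcal{D}_{E}(G) = E^{\infty}$ is here a matter of definition (see the sentence right before Theorem~\ref{main-example}), so invoking Theorem~\ref{description} is not strictly necessary for this step.
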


\begin{theorem} \label{main-omega-example}
    Let \(E\) be a Fr\'{e}chet space that is a right-invariant lcHfs on \(G\).
    \begin{enumerate}[i]
        \item \(\mathcal{D}_{E}(G)\) satisfies \((\Omega)\) if \(E\) does so.
        \item \(\mathcal{D}^{(\omega)}_{E}(G)\) satisfies \((\Omega)\) if \(E\) does so.
    \end{enumerate}
\end{theorem}

\section{Examples of right-invariant
Fr\'{e}chet spaces of smooth and ultradifferentiable functions}\label{sect-examplesII}
In this final section, we discuss the quasinormability and the property \((\Omega)\) for a particular class of weighted Fr\'{e}chet spaces of smooth and ultradifferentiable functions.

A right-invariant \((Bf)\)-space \(E\) on \(G\) is said to be \emph{solid} \cite{F1987,F-G} if for all \(f \in E\) and \(g \in \Lloc(G)\) 
\[
    \abs{g(x)} \leq \abs{f(x)} \text{ for almost all \(x \in G\) } \, 
    \Longrightarrow g \in E \text{ and } \norm{g}_E \leq \norm{f}_E
    .
\]
We will also use the following assumption on a right-invariant \((Bf)\)-space \(E\) (cf.\ \cite{F1987}):
\begin{enumerate}[resume*=rifs]
    \item \label{cnd:A7} \(L_x(E) \subseteq E\) for all \(x \in G\) and there is \(B > 0\) such that
        \[
            \norm{ L_xf}_E 
            \leq B\norm{ f}_E
            , \qquad \forall f \in E, x \in G
            .
        \]
\end{enumerate}

\begin{example}\label{intro-example} 
    The spaces \(L^p(G)\), \(p \in \{0\} \cup [1,\infty]\), are solid right-invariant \((Bf)\)-spaces.
    The spaces \(L^0(G)\) and \(L^\infty(G)\) satisfy \ref{cnd:A7}, whereas \(L^p(G)\), \(1 \leq p < \infty\), satisfy \ref{cnd:A7} if \(G\) is unimodular.
\end{example}

By a \emph{weight function system on \(G\)} we mean a pointwise non-decreasing sequence \(\mathcal{V} = (v_j)_{j \in \N}\) of strictly positive continuous functions on \(G\) satisfying the condition \eqref{wfs} from the introduction.
Given a solid right-invariant \((Bf)\)-space \(E\) on \(G\), we denote by \(E_{\mathcal{V}}\) the space consisting of all \(f \in \Lloc(G)\) such that \(fv_j \in E\) for all \(j \in \N\).
We set
\[
    \norm{ f }_{E,v_j} 
    = \norm{ f v_j}_E
    , \qquad f \in E_{\mathcal{V}},\ j \in \N, 
\]
and endow \(E_{\mathcal{V}}\) with the locally convex topology generated by the norms \(\{ \norm{ \Cdot }_{E,v_j} \mid j \in \N\}\).
Then, \(E_{\mathcal{V}}\) is a Fr\'{e}chet space that is a right-invariant lcHfs on \(G\).

\begin{example}\label{example-df}
    Suppose that \(G\) is connected.
    Fix a left- or right-invariant Riemannian metric on \(G\) and consider the associated distance function \(d\colon G \times G \to [0,\infty)\) \cite{BK,Garding}.
    Set \(d(x) = d(e,x)\) for \(x \in G\).
    Then, \(d\colon G \to [0,\infty)\) is continuous and subadditive, i.e., \(d(xy) \leq d(x) + d(y)\) for all \(x,y \in G\).
    Let \((w_j)_{j \in \N}\) be a pointwise non-decreasing sequence of strictly positive continuous functions on \([0,\infty)\) such that
    \[
        \forall i \in \N \, \exists j \in \N, C >0 \, \forall t \geq 0 \colon  
        w_i(t+1) \leq C w_j(t)
        .
    \]
    Then, \((w_j \circ d)_{j \in \N}\) is a weight function system on \(G\).
\end{example}

Let \(E\) be a solid right-invariant \((Bf)\)-space on \(G\) and let \(\mathcal{V}\) be a weight function system on \(G\).
In the next two results, we characterize the quasinormability and the property \((\Omega)\) for \(\mathcal{D}_{E_{\mathcal{V}}}(G)\) and \(\mathcal{D}^{(\omega)}_{E_{\mathcal{V}}}(G)\) in terms of \(\mathcal{V}\).
\begin{theorem} \label{main-t-example1}
    Let \(E\) be a solid right-invariant \((Bf)\)-space on \(G\) and let \(\mathcal{V} = (v_j)_{j \in \N}\) be a weight function system on \(G\).
    Consider the following statements:
    \begin{enumerate}[i]
        \item \label{main-t-example1-1}
            \(\mathcal{V}\) satisfies the condition
            \begin{equation}
                \label{Q-inv}
                \forall i \in \N \, \exists j \geq i \, \forall m \geq j \, \forall \varepsilon \in (0,1] \, \exists C >0 \, \forall x \in G \colon \frac{1}{v_j(x)} \leq \frac{\varepsilon }{v_i(x)} + \frac{C}{v_m(x)}.
            \end{equation}
        \item \label{main-t-example1-2}
            \(E_{\mathcal{V}}\) is quasinormable.
        \item \label{main-t-example1-3}
            \(\mathcal{D}_{E_{\mathcal{V}}}(G)\) (\(\mathcal{D}^{(\omega)}_{E_{\mathcal{V}}}(G)\)) is quasinormable.
    \end{enumerate}
    Then, \ref{main-t-example1-1} \(\Rightarrow\) \ref{main-t-example1-2} \(\Rightarrow\) \ref{main-t-example1-3}.
    If in addition \(E\) satisfies \ref{cnd:A7}, then \ref{main-t-example1-3} \(\Rightarrow\) \ref{main-t-example1-1}.
\end{theorem}
\begin{proof}
    \ref{main-t-example1-1} \(\Rightarrow\) \ref{main-t-example1-2}
    By \Cref{QN-F} it suffices to show that
    \[
        \forall i \in \N \, \exists j \geq i \, \forall \, m \geq j \, \forall \varepsilon \in (0,1] \, \exists C> 0 \colon 
        V_{\norm{ \Cdot }_{E,v_j}} \subseteq \varepsilon V_{\norm{ \Cdot }_{E,v_i}} + C V_{\norm{ \Cdot }_{E,v_m}}
        .
    \]
    Let \(i \in \N\) be arbitrary.
    Choose \(j \geq i\) according to \eqref{Q-inv}.
    Let \(m \geq j\) and \(\varepsilon \in (0,1]\) be arbitrary.
    By \eqref{Q-inv} there is \(C > 0\) such that
    \[
        \frac{1}{v_j(x)} 
        \leq \frac{\varepsilon }{2v_i(x)} + \frac{C}{v_m(x)}
        , \qquad \forall x \in G
        .
    \]
    Note that
    \begin{equation}
        \label{implication}
        v_i(x) \geq \varepsilon v_j(x) \, \Longrightarrow \, v_m(x) \leq 2Cv_j(x), \qquad x \in G.
    \end{equation}
    Let \(\chi_\varepsilon\) be the indicator function of the set
    \[
        \{ x \in G \mid v_i(x) \leq \varepsilon v_j(x) \}
        .
    \]
    Let \(f \in V_{\norm{ \Cdot }_{E,v_j}}\) be arbitrary.
    Since \(E\) is solid, we have \(f\chi_\varepsilon \in E_{\mathcal{V}}\).
    For all \(x \in G\)
    it holds that
    \[
        \abs{f(x) \chi_\varepsilon(x)}v_i(x) 
        \leq \varepsilon \abs{f(x)} v_j(x)
    \]
    and, by \eqref{implication},
    \[
        \abs{f(x)(1- \chi_\varepsilon(x))}v_m(x) 
        \leq 2C\abs{f(x)} v_{j}(x)
        .
    \]
Using that \(E\) is solid and \(\norm{ f }_{E,v_j} \leq 1\), we obtain that \(\norm{f \chi_\varepsilon}_{E,v_i} \leq \varepsilon\) and \(\norm{f(1-\chi_\varepsilon)}_{E,v_m} \leq 2C\), whence
    \[
        f 
        = f \chi_\varepsilon + f(1-\chi_\varepsilon) 
        \in \varepsilon V_{\norm{ \Cdot }_{E,v_i}} + 2C V_{\norm{ \Cdot }_{E,v_m}}
        .
    \]
    \ref{main-t-example1-2} \(\Rightarrow\) \ref{main-t-example1-3}
    This follows from \Cref{main-example}.
    \\
    \ref{main-t-example1-3} \(\Rightarrow\) \ref{main-t-example1-1}
    \emph{(under the extra assumption \ref{cnd:A7})} 
    We will show that \(\mathcal{V}\) satisfies \eqref{Q-inv} if \(\mathcal{D}^{(\omega)}_{E_{\mathcal{V}}}(G)\) is quasinormable (the proof that \(\mathcal{V}\) satisfies \eqref{Q-inv} if \(\mathcal{D}_{E_{\mathcal{V}}}(G)\) is quasinormable is very similar).
    Let \(\mathbf{X}\) be a left-invariant frame on \(G\).
    For \(h >0\) and \(j \in \N\) we write
    \[
        \norm{ f }_{j,h} 
        = \sup_{l \in \N} \max_{\alpha \in \{1, \ldots, n\}^l} \norm{X^\alpha f}_{E,v_j}\exp \left(-\frac{1}{h}\varphi^*(hl)\right)
        , \qquad f \in \mathcal{D}^{(\omega)}_{E_{\mathcal{V}}}(G)
        .
    \]
    As \(\mathcal{D}^{(\omega)}_{E_{\mathcal{V}}}(G)\) is quasinormable, \Cref{QN-F} and \Cref{description}\ref{description-2} imply that
    \begin{gather}
        \label{QND}
        \forall i' \in \N \, \exists j' \geq i', h >0 \, \forall \, m' \geq j' \, \forall \varepsilon \in (0,1] \, \exists C> 0 \\ \nonumber
        \forall f \in \mathcal{D}^{(\omega)}_{E_{\mathcal{V}}}(G), \norm{ f }_{j',h} \leq 1\, \exists f_1,f_2 \in E_{\mathcal{V}}\colon f = f_1+f_2, \, \\ \nonumber
        \norm{ f_1 }_{E, v_{i'}} \leq \varepsilon \quad \text{and} \quad \norm{f_2 }_{E, v_{m'}} \leq C
        .
    \end{gather}
    Let \(f \in \mathcal{D}_{E_{\mathcal{V} }}^{(\omega)}(G) \backslash \{0\}\) (\Cref{rmk:non-trivial}).
    Choose \(\chi \in \mathcal{D}^{(\omega)}(G)\) such that \(f\chi \neq 0\).
    Set \(K = \operatorname{supp} \chi\).
    Condition \eqref{M12} and the fact that \(E\) is solid imply that \(g = f \chi \in \mathcal{D}_{E_{\mathcal{V}}}^{(\omega)}(G)\).
    Let \(B>0\) be as in condition \ref{cnd:A7}.
    Let \(i \in \N\) be arbitrary.
    By \eqref{wfs} there are \(i' \geq i\) and \(C' > 0\) such that
    \begin{equation}
        \label{ineq2}
        v_i(xy^{-1}) \leq C'v_{i'}(x), \qquad x \in G, y \in K.
    \end{equation}
    Choose \(j' \geq i'\) and \(h>0\) according to \eqref{QND}.
    Condition \eqref{wfs} yields that there are \(j \geq j'\) and \(C'' > 0\) such that
    \begin{equation}
        \label{ineq1}
        v_{j'}(xy) \leq C''v_{j}(x), \qquad x \in G, y \in K.
    \end{equation}
    Let \(m \geq j\) be arbitrary.
    Another application of \eqref{wfs} gives us that there are \(m' \geq m\) and
    \(C''' > 0\) such that
    \begin{equation}
        \label{ineq3}
        v_m(xy^{-1}) \leq C'''v_{m'}(x), \qquad x \in G, y \in K.
    \end{equation}
    Let \(\varepsilon >0\) be arbitrary and choose \(C >0\) according to \eqref{QND}.
    Let \(x \in G\) be arbitrary.
    The left-invariance of \(\mathbf{X}\), the fact that \(E\) is solid, condition \ref{cnd:A7}, and \eqref{wfs} imply that \(L_x g \in \mathcal{D}_{E_{\mathcal{V}}}^{(\omega)}(G)\) and
    \[
        \norm{ L_x g }_{j',h} 
        \leq B \sup_{y\in K} v_{j'}(xy) \norm{ g }_{j',h}
        .
    \]
    Hence, by \eqref{ineq1},
    \[
        \norm{ L_x g }_{j',h} 
        \leq BC'' \norm{ g }_{j',h} v_j(x)
        .
    \]
    Set \(C_0 =  BC'' \norm{ g }_{j',h}\).
    Applying \eqref{QND} to \(f = L_xg / ( C_0 v_j(x))\), we find that there are
    \(g_1,g_2 \in E_{\mathcal{V}}\) with \(\norm{ g_1}_{E, v_{i'}} \leq \varepsilon\) and \(\norm{ g_2 }_{E, v_{m'}} \leq C\) such that
    \[
        \frac{L_x g}{C_0v_j(x)} 
        = g_1 + g_2
        .
    \]
    Since \(\operatorname{supp} g \subseteq K\), we have 
    \[
        \frac{g}{C_0v_j(x)} 
        = (L_{x^{-1}}g_1) 1_K + (L_{x^{-1}}g_2) 1_K
        ,
    \]
    where \(1_K\) is the indicator function of \(K\).
    The fact that \(E\) is solid, condition \ref{cnd:A7}, and \eqref{ineq2} imply that \((L_{x^{-1}}g_1) 1_K \in E\) and
    \[
        \norm{ (L_{x^{-1}}g_1) 1_K }_E 
        = \norm{ L_{x^{-1}}(g_1v_{i'}) L_{x^{-1}}\left(\frac{1}{v_{i'}}\right) 1_K }_E 
        \leq B\norm{g_1}_{E,v_{i'}} \sup_{y \in K} \frac{1}{v_{i'}(xy)} 
        \leq \frac{BC' \varepsilon}{v_i(x)}
        .
    \]
    Similarly, by using \eqref{ineq3} instead of \eqref{ineq2}, we have \((L_{x^{-1}}g_2) 1_K \in E\) and
    \[
        \norm{ (L_{x^{-1}}g_2) 1_K }_E 
        \leq \frac{BCC'''}{v_m(x)}
        .
    \]
    Hence,
    \[
        \frac{\norm{g}_E}{C_0v_j(x)} 
        \leq \norm{(L_{x^{-1}}g_1) 1_K }_E + \norm{(L_{x^{-1}}g_2) 1_K}_{E}
        \leq \frac{BC' \varepsilon}{v_i(x)} + \frac{BCC'''}{v_m(x)}
        .
    \]
    The result now follows from a simple rescaling argument (note that \(\norm{g}_E >0\) as \(g \neq 0\)).
\end{proof}

\begin{theorem} \label{main-t-example2}
    Let \(E\) be a solid right-invariant \((Bf)\)-space on \(G\) and let \(\mathcal{V} = (v_j)_{j \in \N}\) be a weight function system on \(G\).
    Consider the following statements:
    \begin{enumerate}[i]
        \item \label{main-t-example2-1}
            \(\mathcal{V}\) satisfies the condition
            \begin{equation}
                \label{omegaw}
                \forall i \in \N \, \exists j \geq i \, \forall m \geq j \, \exists C,s >0 \, \forall \varepsilon \in (0,1] \, \forall x \in G \colon \frac{1}{v_j(x)} \leq \frac{\varepsilon}{v_i(x)} + \frac{C}{\varepsilon^s}\frac{1}{v_m(x)}.
            \end{equation}
        \item \label{main-t-example2-2} 
            \(E_{\mathcal{V}}\) satisfies \((\Omega)\).
        \item  \label{main-t-example2-3}
            \(\mathcal{D}_{E_{\mathcal{V}}}(G)\) (\(\mathcal{D}^{(\omega)}_{E_{\mathcal{V}}}(G)\)) satisfies \((\Omega)\).
    \end{enumerate}
    Then, \ref{main-t-example2-1} \(\Rightarrow\) \ref{main-t-example2-2} \(\Rightarrow \) \ref{main-t-example2-3}.
    If in addition \(E\) satisfies \ref{cnd:A7}, then \ref{main-t-example2-3} \(\Rightarrow \)\ref{main-t-example2-1}.
\end{theorem}
\begin{proof}
    The implication \ref{main-t-example2-2} \(\Rightarrow\) \ref{main-t-example2-3} follows from \Cref{main-omega-example}.
    The proofs of \ref{main-t-example2-1} \(\Rightarrow\) \ref{main-t-example2-2} and \ref{main-t-example2-3} \(\Rightarrow \) \ref{main-t-example2-1} (under the extra assumption \ref{cnd:A7}) are similar to those of the same implications in \Cref{main-t-example1} and are therefore left to the reader.
\end{proof}

\Cref{main-intro-applied} in the introduction now follows from \Cref{intro-example} and \Cref{main-t-example1,main-t-example2}.

\begin{remark}
    We believe the implications \((iii) \Rightarrow (i)\) in \Cref{main-t-example1,main-t-example2} hold without the additional assumption \ref{cnd:A7} but were unable to show this.
\end{remark}

\begin{remark} 
    Let \(\mathcal{V} = (v_j)_{j \in \N}\) be a weight function system on \(G\).
    \begin{enumerate}[i,wide=0pt]
        \item The condition \eqref{Q-inv} is always fulfilled if for all \(i \in \N\) there is \(j \geq i\) such that \(v_i/v_j\) vanishes at infinity. 
        \item The condition \eqref{Q-inv} means that the sequence \((1/v_n)_{n \in \N}\) is regularly decreasing in the sense of \cite{BMS}.
            We refer to \cite{BMS} for more information and various characterizations of \eqref{Q-inv}.
            A slight variant of the condition \eqref{Q-inv} is considered in \cite{Reiher}, where the quasinormability of certain weighted Fr\'{e}chet spaces of measurable functions is studied. 
        \item The condition \eqref{omegaw} is equivalent to
            \begin{equation}
                \label{omegaw1}
                \forall i \in \N \, \exists j \geq i \, \forall m \geq j \, \exists C >0, \theta \in (0,1) \, \forall g \in G \colon v^{\theta}_i(g) v^{1-\theta}_m(g) \leq C v_j(g).
            \end{equation}
            This follows by taking the infimum over \(\varepsilon\) in the right-hand side of the inequality in \eqref{omegaw}.
        \item Let \(G\) be connected and non-compact.
            Let \(d\) and \((w_j)_{j \in \N}\) be as in \Cref{example-df}.
            Set \((v_j)_{j \in \N} = (w_j \circ d)_{j \in \N}\).
            Then, \((v_j)_{j \in \N}\) satisfies \eqref{omegaw1} if and only if
            \[
                \forall i \in \N \, \exists j \geq i \, \forall m \geq j \, \exists C >0, \theta \in (0,1) \, \forall t \geq 0 \colon 
                w^{\theta}_i(t) w^{1-\theta}_m(t) \leq C w_j(t)
                .
            \]
            We refer to \cite[Section 6]{D-V} for various examples of sequences \((w_j)_{j \in \N}\) that (do or do not) satisfy this condition.
    \end{enumerate}
\end{remark}

\end{document}